\newtheorem{theorem}{Theorem}[section]
\newtheorem{lemma}[theorem]{Lemma}
\newtheorem{proposition}[theorem]{Proposition}
\newtheorem{corollary}[theorem]{Corollary}
\newtheorem{definition}[theorem]{Definition}
\newtheorem{conjecture}[theorem]{Conjecture}
\newtheorem{thm}{Theorem}
\newtheorem{problem}[thm]{Problem}
\newtheorem{preproof}{{\bf Proof}}
\newenvironment{proof}[1]{\begin{preproof}{\rm
               #1}\hfill{$\blacksquare$}}{\end{preproof}}
\newtheorem{presproof}{{\bf Sketch of Proof.\ }}
\newtheorem{prepro}{{\bf Proposition}}
\title{Simultaneous coloring of vertices and incidences of Outerplanar graphs}
{\small
\author{Mahsa Mozafari-Nia$^a$, Moharram N. Iradmusa$^{a,b}$\\
{\small $^{a}$Department of Mathematical Sciences, Shahid Beheshti University,}\\
{\small G.C., P.O. Box 19839-63113, Tehran, Iran.}\\
{\small $^{b}$School of Mathematics, Institute for Research in Fundamental Sciences (IPM),}\\
{\small P.O. Box: 19395-5746, Tehran, Iran.}}
\begin{document}
\maketitle
\begin{abstract}
A $vi$-simultaneous proper $k$-coloring of a graph $G$ is a coloring of all vertices and incidences of the graph in which any two adjacent or incident elements in the set $V(G)\cup I(G)$ receive distinct colors, where $I(G)$ is the set of incidences of $G$. The $vi$-simultaneous chromatic number, denoted by $\chi_{vi}(G)$, is the smallest integer $k$ such that $G$ has a $vi$-simultaneous proper $k$-coloring.	In [M. Mozafari-Nia, M. N. Iradmusa, A note on coloring of $\frac{3}{3}$-power of subquartic graphs, Vol. 79, No.3, 2021]  $vi$-simultaneous proper coloring of graphs with maximum degree $4$ is investigated and they conjectured that for any graph $G$ with maximum degree $\Delta\geq 2$, $vi$-simultaneous proper coloring of $G$ is at most $2\Delta+1$.
In [M. Mozafari-Nia, M. N. Iradmusa, Simultaneous coloring of vertices and incidences of graphs, arXiv:2205.07189, 2022]  the correctness of the conjecture for some classes of graphs such as  $k$-degenerated graphs, cycles, forests, complete graphs, regular bipartite graphs is investigated. In this paper, we prove that the $vi$-simultaneous chromatic number of any outerplanar graph $G$ is either $\Delta+2$ or $\Delta+3$, where $\Delta$ is the maximum degree of $G$.
\end{abstract}
{\bf Keywords}: Incidence of graph, simultaneous coloring of graph, outerplanar graph.\\
{\bf 2010 Mathematics Subject Classification}: 05C15, 05C10.
\section{Introduction}\label{sec1}
All graphs we consider in this paper are simple, finite and undirected. For a graph $G$, we denote its vertex set, edge set, face set (if $G$ is planar), maximum degree
and maximum size of cliques of $G$  by $V(G)$, $E(G)$, $F(G)$, $\Delta(G)$ and $\omega(G)$, respectively.  Also, for a vertex $v\in V(G)$, $N_G(v)$ is the set of neighbors of $v$ in $G$ and any vertex of degree $k$ is called a $k$-vertex. From now on,  we use the notation $[n]$ instead of $\{1,\ldots,n\}$. We mention some of the definitions that are referred to throughout the note and for other necessary definitions and notations, we refer the reader to a standard text-book \cite{bondy}.\\
For any graph $G$, apart from vertices and edges of the graph as its elements, we have incidences of $G$ as other elements of the graph. The concepts of incidences of a graph were introduced by Brualdi and Massey in 1993 \cite{Bruldy}. In graph $G$, any pair $i=(v,e)$ is called an incidence of $G$, if $v\in V(G)$, $e\in E(G)$ and $v\in e$. Also, in this case the elements $v$ and $i$ are called incident. The set of the incidences of $G$ is denoted by $I(G)$.
\begin{definition} {\em{\cite{Bruldy}}}\label{inci}
Let $G=(V,E)$ be a multigraph. The incidence graph of $G$, denoted by $\mathcal{I}(G)$, defined with vertex set $I(G)$ and with an edge between any pair of incidences $(v,e)$ and $(w,f)$ provided one of the following holds:
	\begin{enumerate}
		\item[(1)]  $v=w$,
		\item[(2)] $e=f$,
		\item[(3)] the edge $\{v,w\}$ equals $e$ or $f$.
	\end{enumerate}
\end{definition}
For any edge $e=\{u,v\}$, we call $(u,e)$, the first incidence of $u$ and $(v,e)$, the second incidence of $u$. In general, for a vertex $v\in V(G)$, the set of the first incidences and the second incidences of $v$ is denoted by $I_1^G(v)$ and $I_2^G(v)$, respectively. Also, let $I_G(v)=I_1^G(v)\cup I_2^G(v)$, $I_G[v]=\{v\}\cup I_G(v)$, $I_1^G[v]=\{v\}\cup I_1^G(v)$ and $I_2^G[v]=\{v\}\cup I_2^G(v)$. Sometime we remove the index $G$ for simplicity.\\
A mapping $c$ from $V(G)$ to $[k]$ is a proper $k$-coloring of $G$, if $c(v)\neq c(u)$ for any two adjacent vertices $u$ and $v$. A minimum integer $k$ that $G$ has a proper $k$-coloring is the chromatic number of $G$ and denoted by $\chi(G)$.\\
Instead of the vertices, we can color the edges or the incidences of a graph. A mapping $c$ from $E(G)$ to $[k]$ is a proper edge-$k$-coloring of $G$, if $c(e)\neq c(e')$ for any two adjacent edges $e$ and $e'$ ($e\cap e'\neq\varnothing$). A minimum integer $k$ that $G$ has a proper edge-$k$-coloring is the chromatic index of $G$ and denoted by $\chi'(G)$. Similarly, any proper $k$-coloring of $\mathcal{I}(G)$ is an incidence $k$-coloring of $G$. The incidence chromatic number of $G$, denoted by $\chi_i(G)$, is the minimum integer $k$ such that $G$ is incidence $k$-colorable.\\
Another coloring of a graph is simultaneous coloring in which we color two or three kinds of elements of the graph at the same time subject to some constraints. The first and the most well-known simultaneous coloring of graphs is total coloring which was introduced by Behzad in $1965$ \cite{behzad}. A mapping $c$ from $V(G)\cup E(G)$ to $[k]$ is a proper total $k$-coloring of $G$, if $c(x)\neq c(y)$ for any two adjacent or incident elements $x$ and $y$. A minimum integer $k$ that $G$ has a proper total $k$-coloring is the total chromatic number of $G$ and denoted by $\chi''(G)$. Behzad conjectured that  $\chi''(G)$ never exceeds $\Delta(G)+2$.\\ 
 There are some other types of simultaneous coloring in which we color at least two sets of the sets $V(G)$, $E(G)$, and $F(G)$ in the coloring \cite{borodin, chen, Ringel65, wang1, wang2}. Ringel conjectured \cite{Ringel65} that six colors are enough for simultaneous coloring of the vertices and faces of each plane graph $G$, i.e., $\chi_{vf}(G)\leq6$, and proved $\chi_{vf}(G)\leq7$. This conjecture was proved later in \cite{borodin2}. Kronk and Mitchem conjectured that $\Delta(G) +4$ colors are enough for entire coloring of the vertices, edges and faces of each plane graph $G$, i.e., $\chi_{vef}(G)\leq\Delta(G)+4$ and confirmed the conjecture for the case $\Delta(G)\leq3$ \cite{Kronk}. The conjecture was proved in \cite{borodin3} for $\Delta(G)\geq7$, in \cite{Sanders} for $\Delta(G)=6$, in \cite{wang2} for $4\leq\Delta(G)\leq 5$. Simultaneous coloring the edges and faces of 3- and 4-regular planar graphs has been considered by Jucovic \cite{Jucovic} and Fiamcik \cite{Fiamcik}. Melnikov (\cite{Melnikov}, page 543) conjectured that $\chi_{ef}(G)\leq \Delta(G)+3$ for any planar graph $G$. This conjecture was proved in \cite{borodin} for $\Delta(G)\geq10$. Precisely, Borodin proved that $\chi_{ef}(G)\leq\Delta(G)+1$ for each planar graph with $\Delta(G)\geq10$. Finally, this conjecture was proved by Adrian Waller for any planar graph \cite{Waller}.\\ 
In this paper, we are going to investigate the simultaneous coloring of the vertices and the incidences of a graph which is defined in the following.
\begin{definition}{\em{\cite{special}}}\label{verinccol}
Let $G$ be a graph. A $vi$-simultaneous proper $k$-coloring of $G$ is a coloring $c:V(G)\cup I(G)\longrightarrow[k]$ in which any two adjacent or incident elements in the set $V(G)\cup I(G)$ receive distinct colors. The $vi$-simultaneous chromatic number, denoted by $\chi_{vi}(G)$, is the smallest integer k such that $G$ has a $vi$-simultaneous proper $k$-coloring.
\end{definition}
For example,  in Figure~\ref{C4}, a $vi$-simultaneous proper $5$-coloring of the graph $G$ is shown.\\
The relation between  $vi$-simultaneous coloring of a graph and vertex coloring of its $\frac{3}{3}$ power was shown in \cite{special}. The fractional power of graphs was introduced by the second author in \cite{paper13}. By considering an arbitrary graph $G$ and positive integer $n$, we can construct two different graphs, named $n$-power of $G$, denoted by $G^n$, which is constructed by adding an edge between any two vertices of $G$ with distance at most $m$, and $n$-subdivision of $G$, denoted by $G^{\frac{1}{n}}$, which is constructed by replacing each edge $\{x,y\}$ of $G$ with a path of length $n$, including vertices $x=(xy)_0$, $(xy)_1$, $\ldots$, $(xy)_{n-1}$ and $y=(xy)_n$. The $\frac{m}{n}$-fractional power of a graph $G$ is defined to be the $m$-power of the $n$-subdivision of $G$. In other words, $G^{\frac{m}{n}}=(G^{\frac{1}{n}})^m$.\\
In \cite{special}, it is proved that for any graph $G$, $\chi_{vi}(G)=\chi(G^{\frac{3}{3}})$.

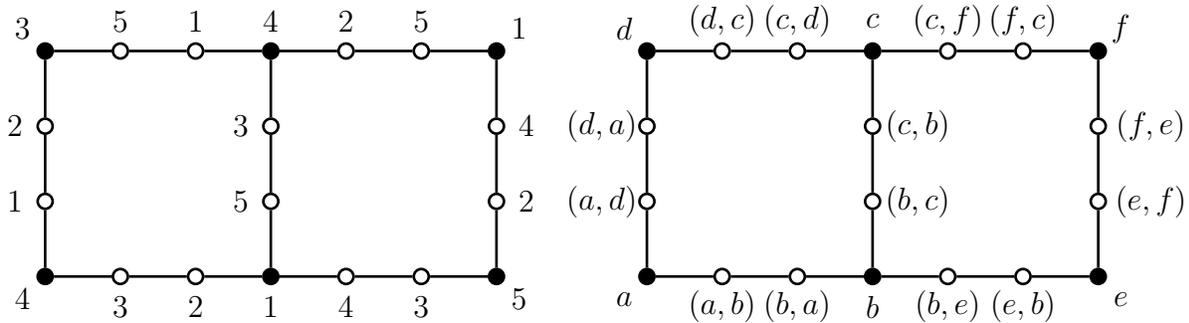
\begin{figure}[h]
\begin{center}
\begin{tikzpicture}[scale=1.0]
\tikzset{vertex/.style = {shape=circle,draw, line width=1pt, opacity=1.0, inner sep=2pt}}
\tikzset{vertex1/.style = {shape=circle,draw, fill=black, line width=1pt,opacity=1.0, inner sep=2pt}}
\tikzset{arc/.style = {->,> = latex', line width=1pt,opacity=1.0}}
\tikzset{edge/.style = {-,> = latex', line width=1pt,opacity=1.0}}
\node[vertex1] (a) at (0,0) {};
\node  at (0,-0.4) {$1$};
\node[vertex] (b) at (1,0) {};
\node  at (1,-0.4) {$4$};
\node[vertex] (c) at  (2,0) {};
\node  at (2,-0.4) {$3$};
\node[vertex1] (d) at  (3,0) {};
\node  at (3.3,-0.3) {$5$};
\node[vertex] (e) at  (3,1) {};
\node  at (3.4,1) {$2$};
\node[vertex] (f) at  (3,2) {};
\node  at (3.4,2) {$4$};
\node[vertex1] (g) at (3,3) {};
\node  at (3.3,3.3) {$1$};
\node[vertex] (h) at (2,3) {};
\node  at (2,3.4) {$5$};
\node[vertex] (i) at (1,3) {};
\node  at (1,3.4) {$2$};
\node[vertex1] (j) at (0,3) {};
\node  at (0,3.4) {$4$};
\node[vertex] (k) at (0,2) {};
\node  at (-0.4,2) {$3$};
\node[vertex] (m) at (0,1) {};
\node  at (-0.4,1) {$5$};
\node[vertex] (k01) at (-1,0) {};
\node  at (-1,-0.4) {$2$};
\node[vertex] (k2) at (-2,0) {};
\node  at (-2,-0.4) {$3$};
\node[vertex1] (k3) at (-3,0) {};
\node  at (-3.3,-0.3) {$4$};
\node[vertex] (k4) at (-3,1) {};
\node  at (-3.4,1) {$1$};
\node[vertex] (k5) at (-3,2) {};
\node  at (-3.4,2) {$2$};
\node[vertex1] (k6) at (-3,3) {};
\node  at (-3.3,3.3) {$3$};
\node[vertex] (k7) at (-2,3) {};
\node  at (-2,3.4) {$5$};
\node[vertex] (k8) at (-1,3) {};
\node  at (-1,3.4) {$1$};
\node[vertex] (m01) at (9,0) {};
\node  at (9,-0.4) {$(b,e)$};
\node[vertex] (m2) at (10,0) {};
\node  at (10,-0.4) {$(e,b)$};
\node[vertex1] (m3) at (11,0) {};
\node  at (11.3,-0.3) {$e$};
\node[vertex] (m4) at (11,1) {};
\node  at (11.7,1) {$(e,f)$};
\node[vertex] (m5) at (11,2) {};
\node  at (11.7,2) {$(f,e)$};
\node[vertex1] (m6) at (11,3) {};
\node  at (11.3,3.3) {$f$};
\node[vertex] (m7) at (10,3) {};
\node  at (10,3.4) {$(f,c)$};
\node[vertex] (m8) at (9,3) {};
\node  at (9,3.4) {$(c,f)$};

\draw[edge] (a)  to (b);
\draw[edge] (b)  to (c);
\draw[edge] (c)  to (d);
\draw[edge] (d)  to (e);
\draw[edge] (e)  to (f);
\draw[edge] (f)  to (g);
\draw[edge] (g)  to (h);
\draw[edge] (h)  to (i);
\draw[edge] (i)  to (j);
\draw[edge] (j)  to (k);
\draw[edge] (k)  to (m);
\draw[edge] (m)  to (a);

\draw[edge] (a)  to (k01);
\draw[edge] (k01)  to (k2);
\draw[edge] (k2)  to (k3);
\draw[edge] (k3)  to (k4);
\draw[edge] (k4)  to (k5);
\draw[edge] (k5)  to (k6);
\draw[edge] (k6)  to (k7);
\draw[edge] (k7)  to (k8);
\draw[edge] (k8)  to (j);
\node[vertex1] (a1) at (5,0) {};
\node  at (4.7,-0.3) {$a$};
\node[vertex] (b1) at (6,0) {};
\node  at (6,-0.4) {$(a,b)$};
\node[vertex] (c1) at  (7,0) {};
\node  at (7,-0.4) {$(b,a)$};
\node[vertex1] (d1) at  (8,0) {};
\node  at (8,-0.4) {$b$};
\node[vertex] (e1) at  (8,1) {};
\node  at (8.6,1) {$(b,c)$};
\node[vertex] (f1) at  (8,2) {};
\node  at (8.6,2) {$(c,b)$};
\node[vertex1] (g1) at (8,3) {};
\node  at (8,3.4) {$c$};
\node[vertex] (h1) at (7,3) {};
\node  at (7,3.4) {$(c,d)$};
\node[vertex] (i1) at (6,3) {};
\node  at (6,3.4) {$(d,c)$};
\node[vertex1] (j1) at (5,3) {};
\node  at (4.7,3.3) {$d$};
\node[vertex] (k1) at (5,2) {};
\node  at (4.4,2) {$(d,a)$};
\node[vertex] (m1) at (5,1) {};
\node  at (4.4,1) {$(a,d)$};
\draw[edge] (a1)  to (b1);
\draw[edge] (b1)  to (c1);
\draw[edge] (c1)  to (d1);
\draw[edge] (d1)  to (e1);
\draw[edge] (e1)  to (f1);
\draw[edge] (f1)  to (g1);
\draw[edge] (g1)  to (h1);
\draw[edge] (h1)  to (i1);
\draw[edge] (i1)  to (j1);
\draw[edge] (j1)  to (k1);
\draw[edge] (k1)  to (m1);
\draw[edge] (m1)  to (a1);
\draw[edge] (d1)  to (m01);
\draw[edge] (m01)  to (m2);
\draw[edge] (m2)  to (m3);
\draw[edge] (m3)  to (m4);
\draw[edge] (m4)  to (m5);
\draw[edge] (m5)  to (m6);
\draw[edge] (m6)  to (m7);
\draw[edge] (m7)  to (m8);
\draw[edge] (m8)  to (g1);
\end{tikzpicture}
\caption{(Right) Black vertices are corresponding to the vertices of $G$ and white vertices are corresponding to the incidences of $G$. The incidence $(u,\{u,v\})$ is denoted by $(u,v)$. (Left) $vi$-simultaneous proper $5$-coloring of graph $G$.}
\label{C4}
\end{center}
\end{figure}
Let $G$ be a graph with $\Delta(G)=\Delta$. We need at least $\Delta+2$ colors for any $vi$-simultaneous proper coloring of $G$. Suppose that $v$ is a $\Delta$-vertex in $G$ and $u\in N_G(v)$. Since any two elements of $I_1^G[v]\cup\{(u,v)\}$ are incident, $\chi_{vi}(G)\geq\Delta(G)+2$. Therefore, the problem of finding some upper bounds for $vi$-simultaneous chromatic number of graph $G$ in terms of $\Delta(G)$ is a natural problem.\\
We can define some special kind of $vi$-simultaneous coloring of graphs by restricting the number of colors that appear on the incidences of each vertex.
\begin{definition}{\em{\cite{special}}}\label{(k,l)IncidenceCol}
	A $vi$-simultaneous proper $k$-coloring of a graph $G$ is called $vi$-simultaneous $(k,s)$-coloring of $G$ if for any vertex $v$, the number of colors used for coloring $I_2(v)$ is at most $s$. We denote by $\chi_{vi,s}(G)$ the smallest number of colors required for a $vi$-simultaneous $(k,s)$-coloring of $G$.
\end{definition}
As you can see in Figure~\ref{C4}, the given coloring for the graph is a $vi$-simultaneous $(5,1)$-coloring. Obviously, one can show that $\chi_{vi,1}(G)\geq \chi_{vi,2}(G)\geq \chi_{vi,3}(G)\geq\cdots\geq\chi_{vi,\Delta}(G)=\chi_{vi}(G)$ for any graph $G$ with $\Delta(G)=\Delta$. For example $\chi_{vi,1}(K_3)=6>\chi_{vi,2}(K_3)=\chi_{vi}(K_3)=5$ (see Figure ~\ref{Fig2}).\\
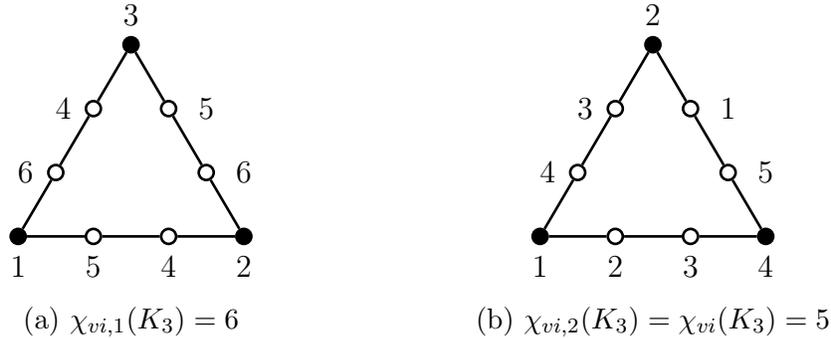
\begin{figure}[h]
	\begin{subfigure}{.55\textwidth}
		\begin{center}
			\begin{tikzpicture}[scale=1.0]
			\tikzset{vertex/.style = {shape=circle,draw, line width=1pt, opacity=1.0, inner sep=2pt}}
			\tikzset{vertex1/.style = {shape=circle,draw, fill=black, line width=1pt,opacity=1.0, inner sep=2pt}}
			\tikzset{arc/.style = {->,> = latex', line width=1pt,opacity=1.0}}
			\tikzset{edge/.style = {-,> = latex', line width=1pt,opacity=1.0}}
			\node[vertex1] (a) at (0,0) {};
			\node  at (0,-0.4) {$1$};
			\node[vertex] (b) at (1,0) {};
			\node  at (1,-0.4) {$5$};
			\node[vertex] (c) at  (2,0) {};
			\node  at (2,-0.4) {$4$};
			\node[vertex1] (d) at  (3,0) {};
			\node  at (3,-0.4) {$2$};
			\node[vertex] (e) at  (2.5,0.85) {};
			\node  at (3,0.85) {$6$};
			\node[vertex] (f) at  (2,1.7) {};
			\node  at (2.5,1.7) {$5$};
			\node[vertex1] (g) at (1.5,2.55) {};
			\node  at (1.5,2.95) {$3$};
			\node[vertex] (h) at (1,1.7) {};
			\node  at (0.6,1.7) {$4$};
			\node[vertex] (i) at (0.5,0.85) {};
			\node  at (0.1,0.85) {$6$};
			\draw[edge] (a)  to (b);
			\draw[edge] (b)  to (c);
			\draw[edge] (c)  to (d);
			\draw[edge] (d)  to (e);
			\draw[edge] (e)  to (f);
			\draw[edge] (f)  to (g);
			\draw[edge] (g)  to (h);
			\draw[edge] (h)  to (i);
			\draw[edge] (i)  to (a);
			\end{tikzpicture}
			\caption{$\chi_{vi,1}(K_3)=6$}\label{1}
		\end{center}
	\end{subfigure}
	\begin{subfigure}{.3\textwidth}
		\begin{center}
			\begin{tikzpicture}[scale=1.0]
			\tikzset{vertex/.style = {shape=circle,draw, line width=1pt, opacity=1.0, inner sep=2pt}}
			\tikzset{vertex1/.style = {shape=circle,draw, fill=black, line width=1pt,opacity=1.0, inner sep=2pt}}
			\tikzset{arc/.style = {->,> = latex', line width=1pt,opacity=1.0}}
			\tikzset{edge/.style = {-,> = latex', line width=1pt,opacity=1.0}}
			\node[vertex1] (a) at (0,0) {};
			\node  at (0,-0.4) {$1$};
			\node[vertex] (b) at (1,0) {};
			\node  at (1,-0.4) {$2$};
			\node[vertex] (c) at  (2,0) {};
			\node  at (2,-0.4) {$3$};
			\node[vertex1] (d) at  (3,0) {};
			\node  at (3,-0.4) {$4$};
			\node[vertex] (e) at  (2.5,0.85) {};
			\node  at (3,0.85) {$5$};
			\node[vertex] (f) at  (2,1.7) {};
			\node  at (2.5,1.7) {$1$};
			\node[vertex1] (g) at (1.5,2.55) {};
			\node  at (1.5,2.95) {$2$};
			\node[vertex] (h) at (1,1.7) {};
			\node  at (0.6,1.7) {$3$};
			\node[vertex] (i) at (0.5,0.85) {};
			\node  at (0.1,0.85) {$4$};
			\draw[edge] (a)  to (b);
			\draw[edge] (b)  to (c);
			\draw[edge] (c)  to (d);
			\draw[edge] (d)  to (e);
			\draw[edge] (e)  to (f);
			\draw[edge] (f)  to (g);
			\draw[edge] (g)  to (h);
			\draw[edge] (h)  to (i);
			\draw[edge] (i)  to (a);
			\end{tikzpicture}
			\caption{$\chi_{vi,2}(K_3)=\chi_{vi}(K_3)=5$}\label{2}
		\end{center}
	\end{subfigure}
	\caption{(Left) $vi$-simultaneous proper $(6,1)$-coloring of $K_3$. (Right) $vi$-simultaneous proper $(5,2)$-coloring of $K_3$.}\label{Fig2}
\end{figure}
In \cite{mahsa, special}, $vi$-simultaneous proper coloring of graphs is investigated. In \cite{mahsa}, it is proved that $\chi_{vi}(G)\leq 9$ for any graph $G$ with $\Delta(G)\leq 4$. Also, the following conjecture was proposed:
\begin{conjecture}{\em{\cite{mahsa}}}\label{cmahsa}
Let $G$ be a graph with $\Delta(G)\geq 2$. Then $\chi_{vi}(G)\leq 2\Delta(G)+1$.
\end{conjecture}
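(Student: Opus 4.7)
The plan is to exploit the identity $\chi_{vi}(G)=\chi(G^{\frac{3}{3}})$ recalled in Section~\ref{sec1} and attack the equivalent inequality $\chi(G^{\frac{3}{3}})\le 2\Delta+1$ by a minimum counterexample argument combined with a list-colouring extension. Let $G$ be a hypothetical counterexample minimising $|V(G)|+|E(G)|$ and having $\Delta(G)=\Delta\ge 2$. The starting observation is that the set $\{v\}\cup I_1(v)\cup\{(u,\{u,v\})\}$ at any $\Delta$-vertex $v$ is a clique of size $\Delta+2$ in $G^{\frac{3}{3}}$, so the lower bound $\chi(G^{\frac{3}{3}})\ge \Delta+2$ is immediate; the strategy is to spend the additive slack of $\Delta-1$ between this lower bound and the target upper bound on reducible configurations.

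The first reduction is to push $\delta(G)$ upwards. Let $v$ be a vertex of degree $d$ with neighbourhood $\{u_1,\ldots,u_d\}$. By minimality $G-v$ admits a $(2\Delta+1)$-colouring, and the new elements to be coloured in $G$ are $v$, the first incidences $(v,\{v,u_i\})$, and the first incidences $(u_i,\{u_i,v\})$, which together induce in $G^{\frac{3}{3}}$ the join of the $(d+1)$-clique $\{v\}\cup I_1(v)$ with the independent set $I_2(v)$. A direct list-size count gives $|L(v)|\ge 2\Delta+1-d$, $|L((v,\{v,u_i\}))|\ge 2\Delta+1-d(u_i)$, and, most restrictively, $|L((u_i,\{u_i,v\}))|\ge 2\Delta+1-(2d(u_i)-1)$. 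A greedy extension (colouring $I_2(v)$ first, then $I_1(v)$, then $v$) succeeds whenever these lists are comfortably large, which rules out $\delta(G)=1$ immediately and, with slightly more care in the choice of $v$, rules out most low-to-moderate-degree configurations. The goal of this step is to force the minimum counterexample to be essentially $\Delta$-regular.

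The hard part, and the main obstacle, is the (nearly) $\Delta$-regular case. Here I would combine a proper $(\Delta+1)$-edge colouring $\varphi$ of $G$ (Vizing) with a Brooks-type proper $(\Delta+1)$-vertex colouring $\psi$ of $G$, after reducing away the trivial exceptional case $G=K_{\Delta+1}$. The candidate $vi$-colouring sends each first incidence $(u,\{u,v\})$ to $\varphi(\{u,v\})\in[\Delta+1]$, each second incidence $(v,\{u,v\})$ to a shifted colour in $\{\Delta+2,\ldots,2\Delta+1\}$ chosen from the $\varphi$-palette of the other edges incident to $u$, and each vertex $v$ to $\psi(v)$ shifted into $\{\Delta+2,\ldots,2\Delta+1\}$. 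Verifying that this candidate is a valid $vi$-colouring reduces to eliminating a bounded number of local clashes, the dangerous ones being a vertex colliding with one of its second incidences and two adjacent vertices whose shifted colours coincide with a boundary incidence. The resolution I would attempt is a Kempe-chain style swap in the shifted palette along an alternating path of second incidences, which preserves properness of both $\varphi$ and $\psi$ while untangling the clash. Proving that such swaps always terminate and can be executed simultaneously for all local conflicts without recreating earlier ones is where the technical weight of the argument lies, and is the step I would expect to occupy the bulk of any complete proof.
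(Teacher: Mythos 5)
You are attempting to prove Conjecture~\ref{cmahsa}, which is an \emph{open problem} quoted from \cite{mahsa}; the paper contains no proof of it. What the paper does is verify the conjecture for outerplanar graphs: Corollary~\ref{vi-outplanar} gives $\chi_{vi}(G)\le\Delta(G)+3$, which is at most $2\Delta(G)+1$ once $\Delta(G)\ge2$, and that verification proceeds by a structural induction (Lemma~\ref{lemmaouterplanar} plus end-face reductions) that has nothing to do with your strategy. So there is no paper proof to compare against; your proposal must stand on its own as an attack on the general conjecture, and it does not close it.

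There are two genuine gaps. First, the degree-reduction step does not deliver the regularity you need. Your own list sizes show the limit: $I_2(v)$ is independent with lists of size at least $2\Delta+1-(2d(u_i)-1)\ge2$, so it can be coloured first, but afterwards each vertex of $I_1(v)$ retains only $2\Delta+1-d(u_i)-d\ge\Delta+1-d$ colours while $\{v\}\cup I_1(v)$ is a clique of size $d+1$, and $v$ itself ends with at least $2\Delta+1-3d$ colours; the greedy extension therefore needs roughly $d\le(\Delta+1)/2$. Minimality thus forces only $\delta(G)\gtrsim\Delta/2$, and every degree between about $\Delta/2$ and $\Delta-1$ remains unreduced, so the ``essentially $\Delta$-regular'' configuration you hand to the second step is never actually reached. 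Second, the regular case is a plan, not an argument. As written the candidate colouring is inconsistent: in the paper's terminology every incidence is the first incidence of its own endpoint, so ``colour each first incidence $(u,\{u,v\})$ by $\varphi(\{u,v\})$'' gives both incidences of every edge the same colour, and they are adjacent; repairing this requires an edge orientation you never introduce. The shifted palette $\{\Delta+2,\ldots,2\Delta+1\}$ has only $\Delta$ slots, so a $(\Delta+1)$-vertex-colouring cannot be embedded in it, and a genuine Brooks $\Delta$-colouring is unavailable precisely when $\Delta=2$, where odd cycles attain the conjectured bound $2\Delta+1=5$ (Corollary~\ref{cycleandpath}), so any correct scheme must be tight there and yours collapses there. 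Most importantly, the Kempe-chain repair---the claim that all clashes among vertices and second incidences crowded into a shared $\Delta$-colour palette can be resolved simultaneously and terminably---is exactly where the difficulty of the conjecture lives, and you state yourself that it is unproven. Until that step is supplied, nothing has been established beyond what was already known.
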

The correctness of the conjecture is proved for some classes of graphs in \cite{special}. In this paper, we are going to find the upper bounds for the $vi$-simultaneous chromatic number of outerplanar graphs and investigate the correctness of the conjecture for these graphs. The main theorems are as follows.\\
\begin{theorem}\label{1}
	If $G$ is an outerplanar graph, then $\chi_{vi,2}(G)\leq \Delta(G)+3$.
\end{theorem}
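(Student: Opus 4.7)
I plan to prove Theorem~\ref{1} by induction on $|V(G)|$; the base cases $|V(G)|\le 2$ can be verified directly. For the inductive step I would invoke the standard structural dichotomy for outerplanar graphs: every outerplanar graph $G$ with $|V(G)|\ge 2$ contains either (i) a vertex of degree at most $1$, or (ii) two adjacent vertices each of degree $2$, or (iii) a vertex of degree $2$ whose two neighbours are joined by an edge (so $v$ lies on a triangle). In each case I remove a single vertex $v$ of the configuration to obtain a smaller outerplanar graph $G'$ with $\Delta(G')\le\Delta(G)=:\Delta$, apply the inductive hypothesis to $G'$ to obtain a $vi$-simultaneous $(\Delta+3,2)$-coloring $c$ of $G'$, and then extend $c$ to $G$ by colouring the new elements of $V(G)\cup I(G)$ one at a time.

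The key quantitative tool is the $(k,2)$ assumption itself. For every already-coloured vertex $w$ let $S_w\subseteq[\Delta+3]$ be the set of colors used on the second incidences of $w$; by hypothesis $|S_w|\le 2$. Hence when I colour a new first incidence $(w,wv)$, the adjacency constraints inherited from $G'$ forbid at most $1+(d(w)-1)+|S_w|\le\Delta+2$ distinct colors, leaving at least one colour free in $[\Delta+3]$; and when I colour a new second incidence $(v,vw)$, the requirement $|S_w^{\text{new}}|\le 2$ forces the colour to lie in $S_w$ whenever $|S_w|=2$, while $S_w$ is always disjoint from the set of colors adjacency forbids for $(v,vw)$, so an admissible colour always exists. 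Case (i), with $d(v)\le 1$, is then immediate after colouring the pair of new incidences and $v$ itself in the order $(v,uv),(u,uv),v$.

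The serious obstacle is case (iii) (and, to a lesser extent, (ii)), where four new incidences plus $v$ must be coloured and the new vertex $v$ ends up adjacent to six already-coloured elements of $V\cup I$; a naive count leaves only $\Delta-3$ free colors for $v$, which is non-positive when $\Delta=3$. I would overcome this by exploiting the triangle $\{v,u_1,u_2\}$ to create overlaps: the incidence $(u_2,u_1u_2)$ lies in $S_{u_1}^{\text{old}}$, is already coloured in $G'$, and is \emph{not} adjacent to $(v,u_1v)$, so I would set $c\bigl((v,u_1v)\bigr)=c\bigl((u_2,u_1u_2)\bigr)$, and symmetrically $c\bigl((v,u_2v)\bigr)=c\bigl((u_1,u_1u_2)\bigr)$. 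These choices preserve the $(k,2)$ condition at $u_1,u_2$ without enlarging the color palette used on second incidences, and they ensure that when we later colour $(u_1,u_1v)$ and $(u_2,u_2v)$ the corresponding forbidden sets do not grow beyond $\Delta+2$. Finally, when assigning the colors of $(u_1,u_1v)$ and $(u_2,u_2v)$ I would pick, whenever possible, a common colour or a colour in $\{c(u_1),c(u_2)\}$, thereby reducing $v$'s forbidden list to at most five distinct colors and guaranteeing at least $\Delta-2\ge 1$ free colours for $v$ when $\Delta\ge 3$. Case (ii) is handled by a parallel argument in which the role of the triangle is played by the two-vertex path through the second $2$-vertex $v'$, forcing the analogous overlap via the small palette $S_{v'}$. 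The graphs with $\Delta\le 2$ (disjoint unions of paths and cycles) are verified separately by explicit construction.
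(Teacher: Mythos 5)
Your overall frame coincides with the paper's proof of the case $\Delta(G)\ge 4$ (Theorem~\ref{delta>3}): the paper also inducts on the order, invokes exactly the dichotomy of Lemma~\ref{lemmaouterplanar}, and in case (iii) reuses the colors of $(u_2,\{u_1,u_2\})$ and $(u_1,\{u_1,u_2\})$ on the two first incidences of $v$, precisely your overlap trick; the argument then closes because $v$ sees at most six colored elements and $\Delta+3-6=\Delta-3\ge 1$ when $\Delta\ge 4$. The divergence, and the gap, is at $\Delta=3$. The paper deliberately does \emph{not} run this argument for subcubic graphs: it proves that case separately, by an induction on end faces of $2$-connected outerplanar graphs (Theorem~\ref{2connectedout}) together with cut-edge/block reductions (Theorem~\ref{delta3}), and in the hard triangular-face subcase it even recolors an already-colored incidence. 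Your substitute for all of that is the assertion that one can, ``whenever possible'', give $(u_1,\{u_1,v\})$ and $(u_2,\{u_2,v\})$ a common color or a color from $\{c(u_1),c(u_2)\}$, leaving $\Delta-2\ge 1$ colors for $v$. That assertion is never proved, and it is false: the two incidences can both be \emph{forced}, to distinct colors outside $\{c(u_1),c(u_2)\}$.

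Concretely, take $G$ to be the hexagon $vu_1x_1wx_2u_2$ with chords $u_1u_2$ and $x_1x_2$: it is outerplanar with $\Delta=3$, has no $1$-vertex and no two adjacent $2$-vertices, so your induction step must process case (iii) at $v$ (or, symmetrically, at $w$). The inductive hypothesis hands you an arbitrary $vi$-simultaneous $(6,2)$-coloring of $G-v$, and it may be this one (writing $(a,b)$ for $(a,\{a,b\})$): $c(u_1)=1$, $c(u_2)=2$, $c(x_1)=3$, $c(x_2)=4$, $c(w)=5$, $c((u_1,u_2))=3$, $c((u_2,u_1))=4$, $c((u_1,x_1))=5$, $c((x_1,u_1))=2$, $c((u_2,x_2))=6$, $c((x_2,u_2))=1$, $c((x_1,x_2))=6$, $c((x_2,x_1))=5$, $c((x_1,w))=1$, $c((w,x_1))=4$, $c((x_2,w))=2$, $c((w,x_2))=3$, which one can check is proper and uses at most two colors on each $I_2(\cdot)$. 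Your recipe sets $c((v,u_1))=c((u_2,u_1))=4$ and $c((v,u_2))=c((u_1,u_2))=3$. Then $(u_1,v)$ is adjacent to elements colored $1,3,5$ (namely $u_1$ and $I_1(u_1)$), $4,2$ (namely $I_2(u_1)$) and $4,3$ (namely $I_1(v)$), so it is forced to color $6$; symmetrically $(u_2,v)$ is adjacent to colors $2,4,6,3,1$ and forced to $5$; now $v$ sees $1,2,4,3,6,5$, i.e.\ all six colors, and your extension halts --- no common color and no color of $\{c(u_1),c(u_2)\}=\{1,2\}$ was ``possible''. This does not contradict the theorem: this particular coloring does extend, but only by abandoning your recipe for the first incidences of $v$ (e.g.\ $c((v,u_1))=c((x_1,u_1))=2$ and $c((v,u_2))=c((x_2,u_2))=1$ works), and you neither consider such choices nor prove that some choice always succeeds. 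Case (ii) with $\Delta=3$ suffers from the same unaddressed difficulty. So as written the proposal proves the theorem only for $\Delta\ge4$ (and $\Delta\le2$); the subcubic case, which is exactly where the paper switches to its end-face induction, remains open in your argument.
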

\begin{theorem}\label{2}
	Suppose that  $G$ is an outerplanar graph. Then
	$$\chi_{vi,1}(G)\leq \left\{
	\begin{array}{cc}
	\Delta+3 &  \Delta(G)\geq 4\ and\  g(G)\geq4,\\
	\Delta+2 & \Delta(G)\geq4\  and\ g(G)\geq6 ,\\
	\Delta+2 & \Delta(G)\geq5\ and\  g(G)\geq4.
	\end{array}
	\right.
	$$
	Note that, since $\chi_{vi,1}(G)\geq \Delta+2$, in the last two cases, we have $\chi_{vi,1}(G)=\Delta+2$.
\end{theorem}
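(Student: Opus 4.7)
The plan is to prove Theorem~\ref{2} by induction on $|V(G)|$. In each of the three parameter regimes the strategy is the same: identify in $G$ a small reducible configuration, delete it to obtain a smaller outerplanar graph $G'$ still satisfying the hypothesis, $(k,1)$-colour $G'$ by the inductive hypothesis, and then extend the colouring back to $G$. The first task is therefore to collect structural lemmas describing which reducible configurations must appear in an outerplanar graph of the given girth. I would either cite or prove the classical facts that every outerplanar graph has a vertex of degree at most $2$, that every outerplanar graph with $g(G)\geq 4$ contains either a $1$-vertex or two adjacent $2$-vertices, and that every outerplanar graph with $g(G)\geq 6$ contains a path of several consecutive $2$-vertices incident to a face of large enough length. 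Each of the three cases of the theorem calls for a slightly different configuration, since the slack bound $\Delta+3$ needs less structure than the tight bound $\Delta+2$.

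For the extension step it is convenient to describe a $(k,1)$-colouring by two functions $c,c_2\colon V(G)\to[k]$, where $c(v)$ is the vertex colour and $c_2(v)$ is the common colour of the first incidences of $v$'s neighbours on their edges to $v$. The adjacency constraints then collapse to: for every edge $uv$ the four values $c(u),c(v),c_2(u),c_2(v)$ satisfy $c(u)\neq c(v)$, $c(u)\neq c_2(v)$, $c(v)\neq c_2(u)$, and $c_2(u)\neq c_2(v)$; and any two vertices with a common neighbour must carry distinct $c_2$-values. Reinserting a deleted vertex $v$ with neighbours $u_1,\dots,u_d$ forces the colours of the new first incidences of $v$ to be $c_2(u_1),\dots,c_2(u_d)$, so these must be pairwise distinct and distinct from the chosen $c(v)$ and $c_2(v)$; one then has to find admissible values for these last two. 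A direct counting argument shows that with $\Delta+3$ colours one can always complete the colouring, giving the first case. For the tight cases with only $\Delta+2$ colours, one exploits the stronger hypothesis to recover one degree of freedom: the extra colour supplied by $\Delta\geq 5$ in the third case, or the longer chain of $2$-vertices supplied by $g\geq 6$ in the second.

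The main obstacle is precisely this extension in the tight regime. After reinserting a $2$-vertex $v$ with neighbours $u_1,u_2$ in a $(\Delta+2,1)$-colouring, one needs $c_2(u_1)\neq c_2(u_2)$ together with several further forbidden values at both $c(v)$ and $c_2(v)$, and with so few colours this can fail. The standard remedy is a preparatory recolouring: one swaps the $c_2$-value of one $u_i$ with a suitably chosen colour before reinserting $v$, using the $g\geq 6$ or $\Delta\geq 5$ hypothesis to ensure that the swap propagates without violating any adjacency further out in the graph. Verifying this for every instance of the reducible configuration, by a careful bookkeeping of the forbidden palette at each element to be coloured, is the bulk of the work.
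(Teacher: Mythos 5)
Your high-level template (induction, delete a reducible configuration, recolour, extend) is the same family of argument as the paper's, and your $(c,c_2)$ encoding of a $(k,1)$-colouring is essentially how the paper reasons implicitly. But there is a genuine gap in \emph{which} configurations you reduce on, and it already breaks your slack case. The paper first reduces to $2$-connected blocks (Corollary~\ref{blocks}) and then reduces on an \emph{end face} $f=[v_i v_{i+1}\ldots v_j]$, a face all of whose internal vertices are $2$-vertices; the decisive feature is that the two attachment vertices $v_i,v_j$ are themselves \emph{adjacent} (the chord $v_iv_j$ lies on $f$ and survives the deletion). This adjacency is not cosmetic: when one chooses the common colour of the new incidences in $I_2(v_{i+1})$, the forbidden values are $c(v_i)$, $c_2(v_i)$, the $c_2$-values of the $G'$-neighbours of $v_i$, and the $c_2$-value coming from the far end of the face; since $v_j$ \emph{is} a $G'$-neighbour of $v_i$, that last constraint is absorbed and the forbidden set has size at most $\Delta+1$. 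In your configuration of two adjacent $2$-vertices $u,v$ with outer neighbours $x,y$, the edge $xy$ need not exist, and then the forbidden palette for the second of the two $c_2$-values to be chosen is $\{c(y),c_2(y),c_2(x),c_2(v)\}\cup\{c_2(w): w\in N_{G'}(y)\}$, which can contain $(\Delta-1)+4=\Delta+3$ distinct colours. So your assertion that ``a direct counting argument shows that with $\Delta+3$ colours one can always complete the colouring'' is false for the configuration you chose; the difficulty is not confined to the tight regime, and nothing in your plan supplies the chord that makes the count close.

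The second gap is in the case $\Delta\geq5$, $g\geq4$. You attribute the win to ``the extra colour supplied by $\Delta\geq5$'', but the obstruction sits at the attachment vertices, whose degrees can equal $\Delta$ however large $\Delta$ is, so extra colours indexed by $\Delta$ never fix the count by themselves. The paper needs a separate structural input here, Theorem~\ref{injective}: every $2$-connected outerplanar graph has an end face one of whose two attachment vertices has degree less than $5$; one then colours from the high-degree end first, and the low-degree end contributes at most three neighbour-$c_2$ constraints, which is exactly what closes the gap to $\Delta+2$. Your proposed substitute, a ``preparatory recolouring'' that swaps one $c_2$-value, is not substantiated and is not plausibly local: changing $c_2(x)$ at a vertex $x$ must avoid $c(x)$, the values $c(w)$ and $c_2(w)$ for all $w\in N(x)$, and the $c_2$-values of all vertices at distance two from $x$ --- up to order $\Delta^2$ colours --- so it cannot be justified by a counting argument either. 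Two smaller slips: your constraint list omits $c(u)\neq c_2(u)$ (the vertex $u$ and the incidences of $I_2(u)$ are at distance two in the subdivision, hence adjacent in $G^{\frac{3}{3}}$), and your $g\geq6$ structural lemma needs the ``or a $1$-vertex'' alternative, since a star has no $2$-vertices at all. Repairing the proof would in effect drive you back to the paper's route: block decomposition, reduction on end faces so the chord is present, and Theorem~\ref{injective} for the third case.
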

The paper is organized as follows. In Section 2, some preliminary definitions and theorems are mentioned and Section 3 is devoted to the proofs of the main theorems.
\section{Preliminaries and Definitions}
To prove theorems expressed in Section $3$, we need some definitions and theorems which are stated in \cite{paper13, special}. As we mentioned before, the $vi$-simultaneous coloring of some classes of graphs are investigated in \cite{special}. Some of these results we are going to use are as follows.
\begin{theorem}{\em{\cite{paper13}}}\label{cycle-path}
	Let $m,n \in \mathbb{N}$ and $k\geq 3$.
\begin{itemize}
	\item 
	$\chi(C_k^m)=\left\{
	\begin{array}{cc}
		k & m\geq \lfloor\frac{k}{2}\rfloor\\
		\lceil\frac{k}{\lfloor\frac{k}{m+1}\rfloor}\rceil & m<\lfloor\frac{k}{2}\rfloor
	\end{array}
	\right.$
	\item $\chi(P_k^m)=\min\{m+1,k\}$
\end{itemize}
\end{theorem}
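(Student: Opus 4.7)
The result has two parts (paths and cycles) and the cycle part splits into two subcases, so the plan breaks into three pieces.

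\textbf{Path case.} The plan is to observe that $P_k^m$ contains, as a subgraph, a complete graph on every set of $\min\{m+1,k\}$ consecutive vertices of $P_k$: any two such vertices lie at distance at most $m$ in $P_k$, hence are adjacent in $P_k^m$. This forces $\chi(P_k^m) \ge \min\{m+1,k\}$. For the matching upper bound, color the vertices $v_0,v_1,\ldots,v_{k-1}$ of $P_k$ by $c(v_i) = (i \bmod (m+1))+1$ when $m+1 \le k$ (and with all distinct colors when $m+1 > k$, i.e.\ when $P_k^m = K_k$). Two vertices $v_i,v_j$ with $|i-j| \le m$ receive different residues mod $m+1$, so this is proper. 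This half is routine.

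\textbf{Cycle case, dense subrange.} When $m \ge \lfloor k/2 \rfloor$, every pair of vertices in $C_k$ is at distance at most $\lfloor k/2 \rfloor \le m$, so $C_k^m = K_k$ and $\chi(C_k^m)=k$. This is immediate.

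\textbf{Cycle case, sparse subrange.} Assume $m < \lfloor k/2 \rfloor$, set $q=\lfloor k/(m+1)\rfloor$ and $c=\lceil k/q\rceil$. For the lower bound, I would argue that any independent set $S$ in $C_k^m$ consists of vertices whose cyclic gaps are all $\ge m+1$, so $|S| \le \lfloor k/(m+1)\rfloor = q$; this gives $\chi(C_k^m) \ge \lceil k/\alpha(C_k^m)\rceil \ge \lceil k/q\rceil = c$. For the upper bound, write $k = qc - r$ with $0 \le r < q$ (using $c = \lceil k/q\rceil$), so the target coloring should place $c-r$ color classes of size $q$ and $r$ color classes of size $q-1$, each class independent, i.e.\ with cyclic gaps $\ge m+1$. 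Concretely I would try the coloring
\[
  c(v_i) \;=\; \bigl(\,\lfloor ic/k\rfloor \bmod c\,\bigr)+1, \qquad i=0,1,\ldots,k-1,
\]
or equivalently group the cycle into $c$ consecutive blocks whose sizes differ by at most one (sizes $\lceil k/c\rceil$ or $\lfloor k/c\rfloor$) and then assign each color one representative per block. The proof that this is proper reduces to checking that each color class has consecutive gap at least $m+1$, which in turn follows from $\lfloor k/c\rfloor \ge m+1$; and this last inequality is exactly the statement $c \le \lfloor k/(m+1)\rfloor \cdot \text{(something)}$ unpacked from $c = \lceil k/q\rceil$ with $q=\lfloor k/(m+1)\rfloor$.

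\textbf{Main obstacle.} The delicate point is the wrap-around in the sparse cycle subrange: when $k$ is not a multiple of $c$, the block sizes are uneven, and one must verify that the single ``short'' gap where the pattern restarts still has length $\ge m+1$. The key numerical fact needed is that $\lfloor k/c\rfloor \ge m+1$ whenever $c=\lceil k/\lfloor k/(m+1)\rfloor\rceil$, which is a short but slightly fiddly floor/ceiling manipulation using $q(m+1) \le k < (q+1)(m+1)$. Everything else (the path bound, the dense cycle subrange, the clique and independence-number lower bounds) is straightforward.
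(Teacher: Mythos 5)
A preliminary remark: the paper never proves this statement --- Theorem~\ref{cycle-path} is quoted from \cite{paper13} and used as a black box --- so there is no in-paper proof to compare against, and your proposal has to stand on its own. Your path case, your dense cycle case ($m\geq\lfloor k/2\rfloor$ gives $C_k^m=K_k$), and your lower bound $\chi(C_k^m)\geq\lceil k/q\rceil$ via $\alpha(C_k^m)\leq q=\lfloor k/(m+1)\rfloor$ are all correct.

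The genuine gap is in the sparse-case upper bound, where you have swapped the roles of $q$ (the class size / number of blocks) and $c=\lceil k/q\rceil$ (the number of colors / block size). Your ``key numerical fact'' $\lfloor k/c\rfloor\geq m+1$ is false in general: take $k=7$, $m=2$ (so $m<\lfloor k/2\rfloor=3$); then $q=2$, $c=4$, and $\lfloor k/c\rfloor=1<3=m+1$. Correspondingly, your displayed coloring $c(v_i)=(\lfloor ic/k\rfloor \bmod c)+1$ is constant on runs of consecutive vertices (its color classes are the intervals $\{i:(j-1)k/c\leq i<jk/c\}$), and consecutive vertices are adjacent in $C_k^m$, so it is improper whenever $k>c$; in the example above it gives $c(v_0)=c(v_1)$. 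The verbal variant ``$c$ consecutive blocks, one representative of each color per block'' fails for the same reason: with $c$ blocks a color class can have up to $c>q$ vertices, contradicting your own correct observation that every class has size at most $q=\alpha(C_k^m)$. The repair is the dual construction: partition the cycle into $q$ (not $c$) blocks of consecutive vertices with sizes $\lfloor k/q\rfloor$ or $\lceil k/q\rceil$, and color the vertices inside each block $1,2,3,\ldots$ in order. This uses $\max$ block size $=\lceil k/q\rceil=c$ colors, and two vertices with the same color $j$ are the $j$-th vertices of distinct blocks, so their cyclic distance in either direction is a sum of whole block sizes, hence at least $\lfloor k/q\rfloor\geq m+1$; this last inequality, unlike yours, is equivalent to $q\leq k/(m+1)$, which holds by definition of $q$. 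With that swap the argument closes; as written, it does not.
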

By Theorem~\ref{cycle-path} and the fact that $C_n^{\frac{3}{3}}=C_{3n}^{3}$ and $P_n^{\frac{3}{3}}=P_{3n-2}^{3}$, we have the following corollary. 
\begin{corollary}\label{cycleandpath}
If $G$ is a cycle of length $n$, then  $\chi_{vi}(C_n)=\chi(C_n^{\frac{3}{3}})=4$, when $n\equiv 0$ $(\hspace{-.23cm}\mod 4)$. Otherwise, $\chi_{vi}(C_n)=\chi(C_n^{\frac{3}{3}})=5$. Moreover, for any path $P_n$, we have $\chi_{vi}(P_n)=\chi(P_n^{\frac{3}{3}})=4$.
\end{corollary}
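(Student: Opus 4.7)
The plan is a direct application of the ingredients already in place: the identity $\chi_{vi}(G) = \chi(G^{\frac{3}{3}})$ from \cite{special}, combined with Theorem~\ref{cycle-path} and the structural observations $C_n^{\frac{3}{3}} = C_{3n}^{3}$ and $P_n^{\frac{3}{3}} = P_{3n-2}^{3}$ recorded in the paragraph above the corollary. Once the $vi$-coloring problem is translated into computing the chromatic number of a cube of a cycle or a path, Theorem~\ref{cycle-path} applies verbatim and the corollary reduces to a small arithmetic case analysis.

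For paths the calculation is immediate. Substituting $k = 3n-2$ and $m = 3$ into $\chi(P_k^m) = \min\{m+1, k\}$ gives $\chi(P_{3n-2}^3) = \min\{4, 3n-2\}$, which equals $4$ for every $n \geq 2$; the case $n = 1$ is a single vertex and trivial.

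For cycles I would first observe that $\lfloor 3n/2 \rfloor \geq 4 > 3$ for every $n \geq 3$, so only the second branch of Theorem~\ref{cycle-path} is relevant and $\chi(C_{3n}^3) = \lceil 3n / \lfloor 3n/4 \rfloor \rceil$. A short case analysis on $n \bmod 4$ then finishes the argument: when $4 \mid n$, the floor divides $3n$ evenly and the ratio is exactly $4$; in each of the three remaining residue classes, writing $n = 4t+r$ with $r \in \{1,2,3\}$, the denominator $\lfloor 3n/4 \rfloor$ equals $3t$, $3t+1$, or $3t+2$ respectively, and one checks that the resulting ratio lies strictly between $4$ and $5$, so its ceiling is $5$.

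The only mildly delicate point---and the closest the argument comes to an obstacle---is to confirm the three nonzero residue classes uniformly for all $n \geq 3$, including small values such as $n = 3, 5, 6, 7$ where an asymptotic bound on the ratio is loose. These are handled by direct substitution into the same formula, so no genuine difficulty arises; the corollary is essentially a specialization of Theorem~\ref{cycle-path}.
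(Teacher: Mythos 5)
Your proposal takes exactly the paper's route: the paper derives this corollary precisely by combining $\chi_{vi}(G)=\chi(G^{\frac{3}{3}})$ with the identities $C_n^{\frac{3}{3}}=C_{3n}^{3}$, $P_n^{\frac{3}{3}}=P_{3n-2}^{3}$ and Theorem~\ref{cycle-path}, leaving the same arithmetic case analysis on $n \bmod 4$ that you carry out. One cosmetic correction: for $n=5$ the ratio is $15/3=5$ exactly, so it does not lie \emph{strictly} between $4$ and $5$, but its ceiling is still $5$ and your direct-substitution check of small cases already covers this.
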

\begin{theorem}{\em{\cite{special}}}\label{cycles}
	Let $3\leq n\in\mathbb{N}$. Then
	\[\chi_{vi,1}(C_n)=\left\{\begin{array}{lll} 6 & n=3,\\ 4 & n\equiv 0\ (mod\ 4),\\ 5 & otherwise. \end{array}\right.\]
\end{theorem}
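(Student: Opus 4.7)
My plan is to reduce the $(k,1)$-coloring condition to a clean combinatorial problem on two cyclic sequences of length $n$. Given a $vi$-simultaneous $(k,1)$-coloring $c$ of $C_n$, set $a_i=c(v_i)$ and let $y_j$ denote the common color of the two elements of $I_2(v_j)$. Since the first incidence of $v_i$ on edge $v_{i-1}v_i$ is the same element of $I(G)$ as the second incidence of $v_{i-1}$ on that edge, it carries color $y_{i-1}$; analogously the other first incidence of $v_i$ carries color $y_{i+1}$. Checking every adjacent or incident pair in $V\cup I$ via Definition~\ref{inci}, I would verify that $c$ is proper if and only if, for every $i$ taken cyclically,
\textbf{(C1)} $y_i\neq y_{i+1}$, \textbf{(C2)} $y_{i-1}\neq y_{i+1}$, \textbf{(C3)} $a_i\neq a_{i+1}$, and \textbf{(C4)} $a_i\notin\{y_{i-1},y_i,y_{i+1}\}$.

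Then I would split into three cases. If $n=3$, condition (C1) alone forces $y_1,y_2,y_3$ to be pairwise distinct, whereupon (C4) forces each $a_i$ to avoid all three $y_j$'s and (C3) forces $a_1,a_2,a_3$ to be pairwise distinct, giving $\chi_{vi,1}(C_3)\geq 6$; Figure~\ref{Fig2}(a) already exhibits a matching 6-coloring. If $n\equiv 0\pmod 4$ and $n\geq 4$, Corollary~\ref{cycleandpath} gives $\chi_{vi,1}(C_n)\geq\chi_{vi}(C_n)=4$, and a matching 4-coloring is realized by the periodic assignment $y_i\equiv i\pmod 4$, $a_i\equiv i+2\pmod 4$ (with color set $\{1,2,3,4\}$), whose compatibility with (C1)--(C4) is immediate. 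If $n\geq 4$ and $n\not\equiv 0\pmod 4$, the lower bound $\geq 5$ again comes from Corollary~\ref{cycleandpath}; for the matching upper bound I would give an explicit 5-coloring, handling each residue $n\bmod 4\in\{1,2,3\}$ separately by starting from a 3- or 4-periodic base pattern for $y$, introducing the fifth color at at most a couple of ``defect'' positions to make the cyclic sequence close up consistently with (C1) and (C2), and then choosing $a$ compatibly with (C3) and (C4) by a short greedy argument.

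The main obstacle is the third case. The structural observation guiding the construction is that whenever $y_{i-1},y_i,y_{i+1}$ are three distinct values drawn from a 4-color palette, (C4) forces $a_i$ to be the unique remaining color, so (C3) fails precisely when $y_{i-1}=y_{i+2}$; hence if one insists on keeping $y$ inside a 4-color palette, the sequence $y$ must be a proper coloring of $C_n^3$, which is feasible only when $4\mid n$. Consequently, for $n\not\equiv 0\pmod 4$ one must either spend the fifth color inside $y$ at a few carefully placed positions, or leave at least two colors free for $a_i$ at each index by using a base pattern of period 3. The delicate point is arranging the defect segments so that cyclic closure is still compatible with (C1)--(C4); once this is done, the verification reduces to checking the four conditions at a bounded number of positions around each defect, which is routine.
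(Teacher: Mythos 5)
You should first note that the paper you are being compared against never proves this statement at all: Theorem~\ref{cycles} is quoted from reference \cite{special}, so there is no internal proof to measure your argument against, and it must stand on its own. On its own terms, your reduction is correct. Writing $a_i$ for the vertex colors and $y_j$ for the common color of $I_2(v_j)$, the incidence $(v_i,\{v_{i-1},v_i\})$ does inherit color $y_{i-1}$ and $(v_i,\{v_i,v_{i+1}\})$ color $y_{i+1}$, and running through all adjacencies and incidences (equivalently, adjacency in $C_n^{\frac{3}{3}}=C_{3n}^3$) yields exactly your conditions (C1)--(C4) and nothing else. The case $n=3$ (pairwise distinct $y_j$'s, then three more colors for the $a_i$'s, matched by the coloring in Figure~\ref{Fig2}) and the case $4\mid n$ (lower bound from Corollary~\ref{cycleandpath} together with the monotonicity $\chi_{vi,1}\geq\chi_{vi}$, upper bound from your period-4 pattern) are complete and correct.

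The soft spot is the case $n\geq 5$, $n\not\equiv 0\pmod 4$, which you leave as a strategy rather than a construction; since it covers infinitely many $n$, it is the bulk of the upper-bound work, and the ``short greedy argument'' is not as innocent as it sounds: with $y=(1,2,3,4,1,2,3,4,5)$ for $n=9$, choosing $a_i=y_i+2\pmod 4$ along the first eight positions strands position $9$, whose list is $\{2,3\}$ while its two neighbors already carry $2$ and $3$. Your plan is nevertheless completable, and cleanly so: the length-4 block $(y;a)=(1,2,3,4;\,3,4,1,2)$ and the length-5 block $(1,2,3,4,5;\,3,4,5,1,2)$ may be cyclically concatenated in any order, every boundary check of (C1)--(C4) passing, which settles all $n$ of the form $4p+5q$, i.e.\ all $n\geq 4$ except $n\in\{6,7,11\}$; those three admit the explicit solutions $y=(1,2,3,1,2,3)$, $a=(4,5,4,5,4,5)$, then $y=(1,2,3,4,1,2,3)$, $a=(4,5,1,2,3,4,5)$, then $y=(1,2,3,4,1,2,3,4,1,2,3)$, $a=(4,5,1,2,3,4,1,2,3,4,5)$. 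With such a family of explicit patterns written down and verified, your argument becomes a complete and self-contained proof; as submitted, the third case is a credible sketch rather than a proof.
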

\begin{theorem}{\em{\cite{special}}}\label{tree}
	Let $F$ be a forest. Then
	\[\chi_{vi,1}(F)=\left\{\begin{array}{lll} 1 & \Delta(F)=0,\\ 4 & \Delta(F)=1,\\ \Delta(F)+2 & \Delta(F)\geq2. \end{array}\right.\]
\end{theorem}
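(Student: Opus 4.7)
The case $\Delta(F)=0$ is trivial. For $\Delta(F)=1$, $F$ is a disjoint union of isolated vertices and edges; a single edge $\{u,v\}$ already has the four elements $u,v,(u,\{u,v\}),(v,\{u,v\})$ pairwise adjacent or incident, forcing at least four colors, four colors visibly suffice, and the $(k,1)$-constraint is vacuous since $|I_2(v)|\le 1$. Assume now $\Delta=\Delta(F)\ge 2$. The lower bound $\chi_{vi,1}(F)\ge\chi_{vi}(F)\ge\Delta+2$ is the one observed in the introduction.

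\noindent\textbf{Reformulation.} In any $vi$-simultaneous $(k,1)$-coloring the set $I_2(v)$ is monochromatic; denote its color by $\phi(v)$ (setting $\phi(v)$ arbitrarily when $v$ is isolated). For an edge $e=\{u,v\}$ the two incidences $(u,e)$ and $(v,e)$ then receive $\phi(v)$ and $\phi(u)$ respectively, and the entire coloring is captured by the pair of maps $c,\phi\colon V(F)\to[k]$. Working through Definition~\ref{inci} shows that $(c,\phi)$ yields a valid coloring iff (i) $c$ is a proper vertex coloring of $F$, (ii) $\phi$ is a proper coloring of $F^2$, and (iii) for every vertex $v$ with $\deg(v)\ge 1$ and every $u\in N[v]$, $c(u)\ne\phi(v)$.

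\noindent\textbf{Greedy construction.} Root each component of $F$ arbitrarily and process the vertices in breadth-first order with $\Delta+2$ colors available. For the root $r$, pick any $c(r)\ne\phi(r)$. For a non-root $v$ with parent $p$, choose $\phi(v)$ first and then $c(v)$. The values forbidden for $\phi(v)$ are the $\phi$-colors of the already-colored distance-$\le 2$ neighbors of $v$ (namely $p$, the grandparent if any, and the earlier-processed siblings) together with $c(p)$ (by (iii) applied to $\{p,v\}$). The first group is pairwise distinct by (ii) and has size $\deg(p)\le\Delta$; moreover $c(p)$ differs from every entry of this group, because (iii) at $p$ already forces $c(p)\ne\phi(x)$ for all $x\in N[p]$, and the grandparent and siblings of $v$ all lie in $N(p)$. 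So at most $\deg(p)+1\le\Delta+1$ colors are forbidden and at least one $\phi(v)\in[\Delta+2]$ remains. For $c(v)$, the forbidden values are $c(p)$, $\phi(p)$, and $\phi(v)$; these are pairwise distinct by (ii), (iii) at $p$, and the just-made choice of $\phi(v)$, leaving $\Delta+2-3=\Delta-1\ge 1$ valid colors.

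\noindent\textbf{Main obstacle.} The tight spot is the choice of $\phi(v)$ when $\deg(p)=\Delta$: the count fits inside $\Delta+2$ only because $c(p)$ is guaranteed not to duplicate any $\phi$-value on $N[p]$, which is precisely what (iii) at $p$ (arranged inductively) provides. One must also take care to include exactly the distance-$\le 2$ ancestors/siblings, and to exclude uncles, which lie at distance $3$ and contribute nothing. The downstream constraints $c(v)\ne\phi(w)$ for a child $w$ of $v$ are absorbed into the forbidden list when $w$'s own $\phi$ is chosen in its turn, so the induction is self-consistent and produces a $vi$-simultaneous $(\Delta+2,1)$-coloring of $F$.
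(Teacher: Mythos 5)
Your proof is correct, but note first that this paper never actually proves the statement: Theorem~\ref{tree} is quoted from \cite{special}, so there is no in-paper argument to compare yours against, and your write-up stands as a self-contained proof. Your reduction of a $vi$-simultaneous $(k,1)$-coloring to a pair of maps $(c,\phi)$ --- $c$ proper on $F$, $\phi$ proper on $F^2$, and $c(u)\neq\phi(v)$ whenever $u\in N[v]$ and $\deg(v)\geq 1$ --- is an exact translation of Definition~\ref{(k,l)IncidenceCol}: once each $I_2(v)$ is monochromatic, the three adjacency rules of Definition~\ref{inci} collapse precisely to properness of $\phi$ on $F^2$, and the vertex--incidence conflicts to your condition (iii). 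This is the same mechanism that underlies Theorem~\ref{upperbound-list-vi1} of the paper, but invoking that theorem directly on a forest only gives $\max\{\chi(F^2),\chi_l(F)+\Delta+1\}=\Delta+3$, because there all of $\phi$ is fixed before any vertex is colored. What your argument buys is the interleaving: choosing $\phi(v)$ and then $c(v)$ along a BFS order makes the one genuinely tight step (choosing $\phi(v)$ when the parent $p$ has $\deg(p)=\Delta$) go through with $\Delta+2$ colors, since the forbidden set consists of the $\phi$-values of the parent, grandparent and earlier siblings together with $c(p)$, i.e.\ at most $\deg(p)+1\leq\Delta+1$ values. I checked the bookkeeping: in a tree the already-processed vertices within distance two of $v$ are exactly the parent, grandparent and earlier siblings (uncles are at distance three, children and grandchildren come later); every constraint of types (i)--(iii) is enforced at the moment its BFS-later endpoint is colored, so nothing is dropped; and the $c(v)$ step forbids only $\{c(p),\phi(p),\phi(v)\}$, which fits inside $\Delta+2$ exactly because $\Delta\geq 2$. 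Together with the clique lower bound $\chi_{vi,1}(F)\geq\chi_{vi}(F)\geq\Delta+2$ and the trivial cases $\Delta(F)\leq 1$, the proof is complete.
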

\begin{theorem}{\em{\cite{special}}}\label{complete}
	$\chi_{vi}(K_n)=n+2$ for each $n\in\mathbb{N}\setminus\{1\}$.
\end{theorem}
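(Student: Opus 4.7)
The plan is to establish $\chi_{vi}(K_n)=n+2$ by proving matching lower and upper bounds for every $n\ge 2$. The case $n=2$ is immediate: in $K_2$ the four elements of $V(K_2)\cup I(K_2)$ are pairwise adjacent or incident, so four distinct colors are required and clearly suffice. For $n\ge 3$ I would treat the two bounds separately.

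For the lower bound I would rule out $n+1$ colors. Since $K_n$ is a clique, the $n$ vertices receive distinct colors; after relabeling assume $c(v_i)=i$. Fix $v_i$; the set $\{v_i\}\cup I_1(v_i)$ is a clique of size $n$ in the $vi$-incidence structure (Definition~\ref{inci} plus vertex--incidence incidence), so it consumes $n$ of the $n+1$ available colors, leaving a single residual color $c_i^\star$. Every element of $I_2(v_i)$ is joined to all of $\{v_i\}\cup I_1(v_i)$ (by rule (2) for the edge-mate first incidence, rule (3) for the remaining first incidences, and vertex--incidence for $v_i$ itself), so every element of $I_2(v_i)$ must receive the color $c_i^\star$. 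Now the element $(v_j,\{v_i,v_j\})\in I_2(v_i)$ is simultaneously the first incidence of $v_j$ on that edge and so must avoid both $c(v_i)=i$ and $c(v_j)=j$; letting $j$ range over $[n]\setminus\{i\}$ forces $c_i^\star\notin\{1,\ldots,n\}$, hence $c_i^\star=n+1$. But then for every vertex $v_j$ each of its $n-1\ge 2$ first incidences receives color $n+1$, contradicting the fact that first incidences at a common vertex are pairwise adjacent. Therefore $\chi_{vi}(K_n)\ge n+2$.

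For the upper bound I would give an explicit $vi$-simultaneous proper $(n+2)$-coloring on palette $[n]\cup\{\alpha,\beta\}$ with $c(v_i)=i$. The guiding idea is that at every vertex $v_i$ the $n-1$ first incidences must realize $n-1$ distinct colors chosen from $([n]\cup\{\alpha,\beta\})\setminus\{i\}$, and the constraint $c((v_i,\{v_i,v_j\}))\ne j$ across edges is a derangement-type condition that becomes feasible as soon as the palette exceeds $n$; the two extra colors $\alpha,\beta$ provide the slack needed to simultaneously satisfy rule (3) and the requirement on second incidences. Concretely, I would exploit a (near-)$1$-factorization of $K_n$: for $n$ odd, decompose $E(K_n)$ into the $n$ standard near-perfect matchings obtained by rotating a zig-zag through $\mathbb{Z}_n$; for $n$ even, use a round-robin $1$-factorization into $n-1$ perfect matchings. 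On each matching I would assign a pair of colors to the two sides of its edges in a rotationally symmetric way (with $\alpha$ and $\beta$ reserved for one distinguished matching and its mate), and then verify that the first incidences at any fixed vertex form a derangement of $([n]\cup\{\alpha,\beta\})\setminus\{i\}$ restricted to $[n]\setminus\{i\}\cup\{\alpha\}$ or $\{\beta\}$ as needed.

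The main obstacle is verifying rule (3) of Definition~\ref{inci} globally: in $K_n$ every pair of vertices is adjacent, so every pair of incidences $(v_i,e)$ and $(v_k,f)$ with $v_k$ an endpoint of $e$ or $v_i$ an endpoint of $f$ must receive different colors. Handling these constraints edge-by-edge is combinatorially heavy; the cleanest resolution is to package the coloring as a single algebraic rule driven by the $\mathbb{Z}_n$-action on the zig-zag factorization, so that rule (3) reduces to a fixed linear combination being nonzero mod $n$. Once this is checked for $n$ odd, the even case follows either by an analogous round-robin construction or by inserting a vertex into a coloring of $K_{n-1}$ and spending the two reserved colors $\alpha,\beta$ on the new incidences. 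The small cases $n=2,3$ are handled by direct inspection, and $n=3$ is already displayed in Figure~\ref{Fig2}.
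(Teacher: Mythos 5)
The present paper does not actually prove Theorem~\ref{complete}: it is imported from \cite{special} and used as a black box, so there is no in-paper proof to compare against, and your proposal must be judged on its own terms. Your lower bound is correct and complete. The case $n=2$ is right, and for $n\ge3$ the forcing chain is sound: $\{v_i\}\cup I_1(v_i)$ is an $n$-clique in the conflict graph, every element of $I_2(v_i)$ conflicts with all of it (rules (2) and (3) of Definition~\ref{inci} plus the vertex--incidence relation, exactly as you cite them), so in a hypothetical $(n+1)$-coloring all of $I_2(v_i)$ receives the single residual color $c_i^\star$; since $(v_j,\{v_i,v_j\})$ is also a first incidence of $v_j$, you get $c_i^\star\notin[n]$, hence $c_i^\star=n+1$ for every $i$, and then the $n-1\ge2$ pairwise adjacent first incidences of any fixed vertex all carry color $n+1$, a contradiction. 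In effect you re-derive, for $K_n$, the statement $|c(I_2(v))|=1$ of Theorem~\ref{firstlem} and push it to a contradiction; this half needs no changes.

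The upper bound, by contrast, is a genuine gap: no coloring is ever defined, so nothing is proved. Everything after ``I would give an explicit $vi$-simultaneous proper $(n+2)$-coloring'' is a statement of intent --- which (near-)$1$-factorization to use, how the pairs of colors sit on the two sides of each matching, where the two reserved colors go --- and the crucial verification, which you yourself identify as the main obstacle (rule (3) holding globally in $K_n$), is deferred to an unspecified ``single algebraic rule''. That verification is where all the content of the theorem lies, and it is not routine. Writing $f(i,j)$ for the color of the incidence $(v_i,\{v_i,v_j\})$, properness requires $f(i,j)\ne f(i,l)$, $f(i,j)\ne f(j,l)$ and $f(i,j)\ne f(k,i)$ for all admissible $k,l$, together with $f(i,j)\notin\{c(v_i),c(v_j)\}$; one can check that no uniform linear rule $f(i,j)=ai+bj \pmod n$ on the $\mathbb{Z}_n$-part of the palette can meet all of these (injectivity of $f(i,\cdot)$ forces $b$ to be a unit, the condition $f(i,j)\ne f(j,l)$ then forces $a=0$, and $f(i,j)=bj$ makes the colors appearing on $I_1(v_i)\cup I_2(v_i)$ exhaust all of $\mathbb{Z}_n$, so all $n\ge3$ mutually adjacent vertices would need colors among the two spares). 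Hence the two extra colors must enter the rule non-uniformly, which is exactly the part your sketch leaves open for odd $n$; the even case is doubly open, since you offer two alternative, equally unproven fallbacks. As submitted, the proposal establishes only $\chi_{vi}(K_n)\ge n+2$, not the equality claimed.
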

A graph $G$ is a $k$-degenerated graph if any subgraph of $G$ contains a vertex of degree at most $k$. 
\begin{theorem}{\em{\cite{special}}}\label{kdegenerated}
	Let $k\in\mathbb{N}$ and $G$ be a $k$-degenerated graph with $\Delta(G)\geq2$. Then $\chi_{vi,k}(G)\leq \Delta(G)+2k$. 
\end{theorem}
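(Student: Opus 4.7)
The plan is to prove the bound by induction on $|V(G)|$, using the $k$-degenerate structure to peel off a vertex of low degree at each step. Since $G$ is $k$-degenerated, there exists a vertex $v$ with $d:=\deg_G(v)\leq k$; write $N_G(v)=\{u_1,\ldots,u_d\}$, set $e_j:=\{v,u_j\}$, and let $G':=G-v$. The graph $G'$ is again $k$-degenerate with $\Delta(G')\leq\Delta(G)$, so by the induction hypothesis (after treating a few small base cases directly) there is a $vi$-simultaneous $(k,k)$-coloring $c'$ of $G'$ using at most $\Delta(G)+2k$ colors. The task reduces to extending $c'$ to the $1+2d$ new elements $\{v\}\cup\{(v,e_j)\}_{j=1}^{d}\cup\{(u_j,e_j)\}_{j=1}^{d}$, subject to all incidence and adjacency constraints and to the invariant $|c(I_2(u))|\leq k$ for every vertex $u$.

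I would extend $c'$ in three phases. \emph{Phase one} colors $v$: only the $\leq k$ vertex-colors $c'(u_1),\ldots,c'(u_d)$ are forbidden, so at least $\Delta+k$ choices remain. \emph{Phase two} colors the new first incidences $(v,e_j)\in I_1(v)\cap I_2(u_j)$ one at a time. The critical subtlety is that the $(k,k)$-restriction at $u_j$ forces $c(v,e_j)$ to lie in the existing palette $P_j:=c'(I_2^{G'}(u_j))$ whenever $|P_j|=k$; otherwise a fresh color may be appended to $P_j$. The saving grace is that $P_j$ is automatically disjoint from $\{c'(u_j)\}\cup c'(I_1^{G'}(u_j))$, because in the $vi$-coloring graph every element of $I_1^{G'}(u_j)\cup\{u_j\}$ is adjacent to every element of $I_2^{G'}(u_j)$; consequently, the only forbidden colors that can intrude on $P_j$ are $c(v)$ and the previously chosen $c(v,e_{j'})$ with $j'<j$, at most $d\leq k$ collisions against a palette of size $k$. \emph{Phase three} colors the second incidences $(u_j,e_j)\in I_2(v)\cap I_1(u_j)$: these are pairwise non-adjacent (so the $(k,k)$-restriction on $I_2(v)$ is trivially met by $d\leq k$ elements), and the forbidden set for $(u_j,e_j)$ is bounded by $\{c(v),c(u_j),c(v,e_j)\}\cup c'(I^{G'}(u_j))\cup\{c(v,e_{j'})\}_{j'\neq j}$, which after accounting for the overlaps forced in phases one and two leaves at least one admissible color among the $\Delta+2k$ available.

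The main obstacle I anticipate is verifying that the palette constraint in phase two genuinely leaves a usable color in the worst case — namely when every $P_j$ is full and shares many colors with $\{c(v)\}\cup\{c(v,e_{j'})\}_{j'<j}$. When $\Delta$ is large enough, a clean fix is to choose $c(v)$ in phase one from outside $\bigcup_j P_j$ (feasible since $|\bigcup_j P_j|\leq dk\leq k^2$ and roughly $\Delta+2k$ colors are available); in the remaining regime a bipartite Hall-type matching between the incidences $\{(v,e_j)\}_{j\leq d}$ and the union of palettes $\bigcup_j P_j$, or a strengthened inductive hypothesis that tracks the palettes more carefully, is required to guarantee simultaneous assignment. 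Once phase two is resolved, the arithmetic in phase three is easier and essentially routine.
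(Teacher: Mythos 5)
This theorem is quoted by the paper from \cite{special} without proof, so there is no in-paper argument to compare against; judged on its own terms, your proposal sets up the natural framework (induction via a vertex $v$ of degree $d\leq k$, extension of a coloring of $G-v$) but it is not a proof: both of your extension phases can exhaust all $\Delta+2k$ colors, and the one you call ``essentially routine'' is the one that actually breaks. Concerning the gap you do acknowledge (phase two), note that it is largely an artifact of coloring $v$ first: if the incidences $(v,e_j)$ are colored \emph{before} $v$, the only colors that can intrude on a full palette $P_j$ are the at most $d-1\leq k-1$ previously chosen $c(v,e_{j'})$, so phase two never gets stuck and no Hall-type matching or bound of the form $\Delta>k^2-k$ is needed. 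As written, though, your repair (choosing $c(v)\notin\bigcup_j P_j$) covers only part of the parameter range and the ``remaining regime'' is left genuinely open.

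The fatal, unacknowledged gap is phase three. The forbidden set for $(u_j,e_j)$ is $\{c(v),c(u_j),c(v,e_j)\}\cup c'(I_1^{G'}(u_j))\cup P_j\cup\{c(v,e_{j'})\}_{j'\neq j}$, of size up to $1+1+(\Delta-1)+k+(k-1)=\Delta+2k$, and none of the overlaps you would need are forced by phases one and two. A concrete failure: let $G=K_{1,\Delta}$ with center $u$ and leaf $v$ (so $k=1$), and color $G'=G-v=K_{1,\Delta-1}$ by $c'(u)=1$, $c'(I_1^{G'}(u))=\{2,\ldots,\Delta\}$, all of $I_2^{G'}(u)$ with color $\Delta+1$, and all leaves $w_i$ with color $\Delta+2$; this is a legal $(\Delta+2,1)$-coloring. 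Your phase one may legally pick $c(v)=\Delta+2$ (which even satisfies your ``clean fix'' $c(v)\notin P_1=\{\Delta+1\}$), phase two is then forced to set $c(v,uv)=\Delta+1$, and in phase three the incidence $(u,uv)$ sees the colors $\{1\}\cup\{2,\ldots,\Delta\}\cup\{\Delta+1\}\cup\{\Delta+2\}$, i.e.\ every available color, and cannot be colored. So the phases must be coordinated, not greedy and independent. Reordering to: first incidences $(v,e_j)$, then incidences $(u_j,e_j)$, then $v$ fixes both problems above (the forbidden count for $(u_j,e_j)$ drops to $\Delta+2k-1$ because $|P_j\cup\{c(v,e_j)\}|\leq k$ and $v$ is still uncolored), but even then the last step forbids up to $3d$ colors for $v$, which equals $\Delta+2k$ exactly when $d=k=\Delta$; that tight case still requires an additional idea (forcing color repetitions among $\{c(u_j,e_j)\}$, or a recoloring/exchange argument, or a strengthened induction hypothesis). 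That missing idea is precisely the content of the theorem, and the proposal does not supply it.
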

\begin{theorem}{\em{\cite{special}}}\label{firstlem}
	Let $G$ be a graph with maximum degree $\Delta$ and $c$ is a proper $(\Delta+2)$-coloring of $G^{\frac{3}{3}}$ with colors from $[\Delta+2]$. Then $|c(I_2(v))|\leq\Delta-d_G(v)+1$ for any $t$-vertex $v$, where $c(I_2(v))=\{c(a)\ |\ a\in I_2(v)\}$ . Specially $|c(I_2(v))|=1$ for any $\Delta$-vertex $v$ of $G$.
\end{theorem}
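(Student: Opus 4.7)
The plan is to translate the statement into the $vi$-simultaneous coloring framework and then exploit the clique structure around $v$. By the identity $\chi_{vi}(G)=\chi(G^{\frac{3}{3}})$ recalled in the introduction, the hypothesis gives a $vi$-simultaneous proper $(\Delta+2)$-coloring $c$ of $G$, so I can treat $c$ directly as an assignment of colors to $V(G)\cup I(G)$ satisfying all the adjacency and incidence constraints imposed by Definition~\ref{verinccol}.

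Fix a $t$-vertex $v$ with incident edges $e_1,\ldots,e_t$ and consider the set $S=\{v\}\cup I_1(v)$, of size $t+1$. The first step is to verify from Definition~\ref{inci} that any two distinct elements of $S$ must receive distinct colors under $c$: the vertex $v$ is incident with each $(v,e_i)$, and any two first incidences $(v,e_i)$ and $(v,e_j)$ share the vertex $v$, which is condition $(1)$ of Definition~\ref{inci}. Hence $|c(S)|=t+1$.

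The second step is to show that every element $a=(u_i,e_i)\in I_2(v)$ must avoid every color appearing on $S$. Incidence with $v$ (through the shared edge $e_i$) rules out $c(v)$; condition $(2)$ of Definition~\ref{inci} rules out $c((v,e_i))$ since $a$ and $(v,e_i)$ share the edge $e_i$; and for $j\neq i$, condition $(3)$ applies because the edge $\{v,u_i\}$ equals $e_i$, so $a$ is adjacent to $(v,e_j)$ in $\mathcal{I}(G)$ as well. Therefore $c(a)\in [\Delta+2]\setminus c(S)$, a set of size $(\Delta+2)-(t+1)=\Delta-t+1$, which yields $|c(I_2(v))|\leq \Delta-d_G(v)+1$. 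Setting $t=\Delta$ gives $|c(I_2(v))|\leq 1$, and since $I_2(v)$ is nonempty for a $\Delta$-vertex with $\Delta\geq 1$, equality holds.

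No real obstacle is expected: the argument is pure counting of colors, once the full mutual-conflict relation between $\{v\}\cup I_1(v)$ and $I_2(v)$ has been established. The only care required is the mechanical verification of the three adjacency conditions of Definition~\ref{inci} for the handful of pair types involved.
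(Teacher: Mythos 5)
Your proof is correct, and it is the natural clique-counting argument: the set $\{v\}\cup I_1(v)$ forms a clique of size $d_G(v)+1$ in $G^{\frac{3}{3}}$, every vertex of $I_2(v)$ is adjacent to all of it (your verification of conditions (1)--(3) of Definition~\ref{inci} and of the vertex--incidence conflicts is accurate, matching distances $1$, $2$, and $3$ in the $3$-subdivision), so only $(\Delta+2)-(d_G(v)+1)=\Delta-d_G(v)+1$ colors remain available for $I_2(v)$. Note that the paper itself states this theorem without proof, importing it from reference \cite{special}, so there is no internal proof to compare against; your argument correctly supplies the standard one that that reference relies on, including the nonemptiness remark needed to turn the bound into equality for $\Delta$-vertices.
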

\begin{theorem}{\em{\cite{special}}}\label{secondlem}
	Let $G$ be a graph, $e$ be a cut edge of $G$ and $C_1$ and $C_2$ be two components of $G-e$. Then $\chi_{vi,l}(G)=\max\{\chi_{vi,l}(H_1),\chi_{vi,l}(H_2)\}$ where $H_i=C_i+e$ for $i\in\{1,2\}$ and $1\leq l\leq\Delta(G)$.
\end{theorem}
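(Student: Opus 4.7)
The plan is to prove the two inequalities separately. For the lower bound $\chi_{vi,l}(G)\ge\max\{\chi_{vi,l}(H_1),\chi_{vi,l}(H_2)\}$, I would note that each $H_i$ is a subgraph of $G$, so restricting any $vi$-simultaneous $(k,l)$-coloring $c$ of $G$ to $V(H_i)\cup I(H_i)$ gives a proper $vi$-simultaneous $k$-coloring of $H_i$: every adjacency/incidence relation in $H_i$ is inherited from $G$, and since $I_2^{H_i}(w)\subseteq I_2^G(w)$ for every $w\in V(H_i)$, the $(k,l)$-constraint is preserved automatically.

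For the upper bound, set $k=\max\{\chi_{vi,l}(H_1),\chi_{vi,l}(H_2)\}$ and pick optimal $vi$-simultaneous $(k,l)$-colorings $c_1,c_2$ of $H_1,H_2$ using colors from $[k]$. Write $e=uv$ with $u\in V(C_1)$ and $v\in V(C_2)$. The key observation is that the four elements $u,v,(u,e),(v,e)$ lie in both $V(H_i)\cup I(H_i)$ and are pairwise adjacent or incident in each $H_i$ (they form a $4$-clique in the underlying $vi$-graph), so each $c_i$ assigns them four distinct colors. Since $H_i\supseteq K_2$ forces $k\ge 4$, one can choose a permutation $\pi$ of $[k]$ sending the four $c_2$-colors on these elements to the corresponding $c_1$-colors. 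After relabelling $c_2$ by $\pi\circ c_2$ (still a $(k,l)$-coloring since $\pi$ is a bijection and preserves the size of each color set), I define $c:V(G)\cup I(G)\to[k]$ by $c|_{V(H_1)\cup I(H_1)}=c_1$ and $c|_{V(H_2)\cup I(H_2)}=\pi\circ c_2$; the two definitions agree on the common part $\{u,v,(u,e),(v,e)\}$.

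What remains is to verify that $c$ is a proper $vi$-simultaneous $(k,l)$-coloring of $G$. Propriety inside each $H_i$ is immediate, and the $(k,l)$-constraint at a vertex $w$ follows from $I_2^G(w)=I_2^{H_i}(w)$ for the appropriate $i$; here the cut-edge hypothesis is essential because it guarantees $N_G(u)=N_{H_1}(u)$ and $N_G(v)=N_{H_2}(v)$, so even the boundary vertices $u,v$ cause no trouble. I expect the main technical step to be verifying that no two non-shared elements $x\in (V(H_1)\cup I(H_1))\setminus\{u,v,(u,e),(v,e)\}$ and $y\in (V(H_2)\cup I(H_2))\setminus\{u,v,(u,e),(v,e)\}$ are adjacent or incident in $G$. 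This reduces to a short case analysis using that $e$ is the unique edge between $V(C_1)$ and $V(C_2)$: two vertices on opposite sides of $e$ cannot be adjacent; a vertex $x\in V(C_1)\setminus\{u\}$ cannot be an endpoint of any edge of $H_2\setminus\{e\}$, so it is not incident to any non-shared incidence of $H_2$; and two non-shared incidences $(w,f)\in I(H_1)$ and $(w',f')\in I(H_2)$ satisfy $w\in V(C_1)$, $w'\in V(C_2)$, $f\in E(C_1)$, $f'\in E(C_2)$, so they share neither a vertex nor an edge, and $\{w,w'\}$ is not an edge of $G$. Once this checklist is done, the merged coloring $c$ witnesses $\chi_{vi,l}(G)\le k$, completing the proof.
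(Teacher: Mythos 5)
Your proposal is correct and complete: the lower bound by restriction, and the upper bound by permuting the colors of $c_2$ so that $c_1$ and $\pi\circ c_2$ agree on the shared $4$-clique $\{u,v,(u,e),(v,e)\}$ and then gluing, with the cut-edge hypothesis ruling out conflicts between non-shared elements and guaranteeing $I_2^G(u)=I_2^{H_1}(u)$, $I_2^G(v)=I_2^{H_2}(v)$ so the $(k,l)$-condition survives the merge. Note that the present paper does not actually prove this statement—it is quoted as a known result from reference \cite{special}—so there is no in-paper proof to compare against; your argument is the natural merging proof one would expect that reference to contain, and it is sound as written.
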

\begin{theorem}{\em{\cite{special}}}\label{thirdlem}
	Let $G_1$ and $G_2$ be two graphs, $V(G_1)\cap V(G_2)=\{v\}$ and $G=G_1\cup G_2$. Then
	\[\chi_{vi,1}(G)=\max\{\chi_{vi,1}(G_1),\chi_{vi,1}(G_2), \deg_G(v)+2\}.\]
\end{theorem}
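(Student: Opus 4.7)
The proof splits into the standard lower and upper bounds. For the lower bound, each inequality is immediate. If $c$ is a $vi$-simultaneous $(k,1)$-coloring of $G$, then its restriction to $V(G_i)\cup I(G_i)$ is a $vi$-simultaneous $(k,1)$-coloring of $G_i$, since $I_2^{G_i}(u)\subseteq I_2^G(u)$ gives $|c(I_2^{G_i}(u))|\leq|c(I_2^G(u))|\leq 1$; hence $\chi_{vi,1}(G)\geq\chi_{vi,1}(G_i)$. For the bound $\deg_G(v)+2$, observe that in $G^{\frac{3}{3}}$ the vertex $v$ and the $\deg_G(v)$ first incidences in $I_1^G(v)$ are pairwise adjacent, forcing $\deg_G(v)+1$ distinct colors on $\{v\}\cup I_1^G(v)$; the $(k,1)$-constraint then demands a single extra color for all of $I_2^G(v)$, which must differ from each of the previous ones, giving $\deg_G(v)+2$ in total.

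For the upper bound, set $k=\max\{\chi_{vi,1}(G_1),\chi_{vi,1}(G_2),\deg_G(v)+2\}$ and fix $vi$-simultaneous $(k,1)$-colorings $c_1$ of $G_1$ and $c_2$ of $G_2$, both using colors from $[k]$. Write $\alpha_i$ for the unique color that $c_i$ assigns to $I_2^{G_i}(v)$. The plan is to replace $c_2$ by $\pi\circ c_2$ for a suitable permutation $\pi$ of $[k]$ and then let $c$ agree with $c_1$ on $G_1$ and with $\pi\circ c_2$ on $G_2$. We require $\pi$ to satisfy (i) $\pi(c_2(v))=c_1(v)$, (ii) $\pi(\alpha_2)=\alpha_1$, and (iii) $\pi(c_2(I_1^{G_2}(v)))\cap c_1(I_1^{G_1}(v))=\emptyset$. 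The forbidden colors $\{c_1(v),\alpha_1\}\cup c_1(I_1^{G_1}(v))$ occupy exactly $\deg_{G_1}(v)+2$ of the $k\geq\deg_{G_1}(v)+\deg_{G_2}(v)+2$ available colors, leaving at least $\deg_{G_2}(v)$ colors into which the $\deg_{G_2}(v)$ distinct values of $c_2(I_1^{G_2}(v))$ may be injected. Hence (i)--(iii) are simultaneously realizable and any such partial assignment extends to a permutation of $[k]$.

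It remains to verify that the combined coloring $c$ is a proper $vi$-simultaneous $(k,1)$-coloring of $G$. Propriety and the $(k,1)$-constraint at every vertex other than $v$ are inherited from $c_1$ and $\pi\circ c_2$ because $\pi$ is a bijection; at $v$ we have $c(I_2^G(v))=\{\alpha_1\}$ by (ii), so the $(k,1)$-restriction holds there too. The heart of the argument is then the cross-adjacency check in $G^{\frac{3}{3}}$: any edge between $x\in V(G_1)\cup I(G_1)$ and $y\in V(G_2)\cup I(G_2)$ corresponds to a walk of length at most $3$ in $G^{\frac{1}{3}}$ through $v$, so the only possible conflicts involve pairs from $\{v\}\cup I_1^G(v)\cup I_2^G(v)\cup N_G(v)$, and a short case analysis shows each is ruled out by one of (i), (ii), (iii). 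The main obstacle is precisely this enumeration: one must verify carefully that, in particular, a second incidence at $v$ coming from $G_1$ and one coming from $G_2$ are at distance $4$ in $G^{\frac{1}{3}}$ and therefore \emph{not} adjacent in $G^{\frac{3}{3}}$, so they are allowed to share the color $\alpha_1$ as demanded by the $(k,1)$-constraint, whereas every first-incidence pair across the two sides is adjacent and is correctly separated by (iii).
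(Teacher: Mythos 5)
You should first note that this paper contains no proof of Theorem~\ref{thirdlem} at all: the statement is imported verbatim from \cite{special} and used as a black box, so there is no internal argument to compare yours against. Judged on its own terms, your proof is correct. The lower bound is exactly right: restriction gives $\chi_{vi,1}(G)\geq\chi_{vi,1}(G_i)$, and the clique $\{v\}\cup I_1^G(v)$ in $G^{\frac{3}{3}}$, together with the single color forced on $I_2^G(v)$ (each second incidence of $v$ is adjacent to $v$ and to every first incidence of $v$), gives $\chi_{vi,1}(G)\geq \deg_G(v)+2$. The merging argument for the upper bound is also sound: $\{c_1(v),\alpha_1\}\cup c_1(I_1^{G_1}(v))$ consists of exactly $\deg_{G_1}(v)+2$ distinct colors by propriety of $c_1$, so $k\geq\deg_{G_1}(v)+\deg_{G_2}(v)+2$ leaves at least $\deg_{G_2}(v)$ colors into which $c_2(I_1^{G_2}(v))$ can be injected, and the resulting partial injection extends to a permutation. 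Your identification of the cross-adjacencies is accurate: any path between the two sides passes through $v$, so two elements at distances $d_1$ and $d_2$ from $v$ in $G^{\frac{1}{3}}$ are at distance $d_1+d_2$, whence the only cross pairs needing attention are (first, first), (first, second) and (second, first) incidences at $v$, while the (second, second) pairs sit at distance $4$ and may legitimately share the color $\alpha_1$ — which is precisely what the $(k,1)$-constraint at $v$ requires.

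Two details deserve explicit mention, though neither is a gap in substance. First, in the (second incidence from $G_1$, first incidence from $G_2$) case you need $\alpha_1\notin\pi\bigl(c_2(I_1^{G_2}(v))\bigr)$; this is not literally your condition (iii), but it follows from injectivity of $\pi$ together with (ii), since $c_2(I_1^{G_2}(v))$ avoids $\alpha_2$ by propriety of $c_2$ — alternatively it is built into your stronger construction, which injects into the complement of $\{c_1(v),\alpha_1\}\cup c_1(I_1^{G_1}(v))$ rather than merely the complement of $c_1(I_1^{G_1}(v))$. Second, $\alpha_i$ is undefined when $\deg_{G_i}(v)=0$; those degenerate cases (where the cross pairs you must check do not exist and $G$ is essentially $G_1$ or $G_2$ plus an isolated attachment) are trivial but should be dispatched in a sentence, as should the observation that the restriction of $G^{\frac{3}{3}}$ to either side equals $G_i^{\frac{3}{3}}$, which is what makes ``propriety is inherited'' legitimate.
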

\begin{corollary}\label{blocks}
	Let $k\in\mathbb{N}$ and $G$ be a graph with blocks $B_1,\ldots,B_k$. Then
	\[\chi_{vi,1}(G)=\max\{\chi_{vi,1}(B_1),\ldots,\chi_{vi,1}(B_k), \deg_G(v_1)+2,\ldots,\deg_G(v_s)+2\},\]
	where $v_1,\ldots,v_s$ are the cut vertices of the graph $G$.
\end{corollary}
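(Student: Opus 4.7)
The plan is to prove Corollary~\ref{blocks} by induction on the number $k$ of blocks, using Theorem~\ref{thirdlem} as the key tool at each step. For the base case $k=1$ the graph coincides with its unique block $B_1$ and has no cut vertices, so the identity $\chi_{vi,1}(G)=\chi_{vi,1}(B_1)$ is immediate.

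For the inductive step, I would pick a leaf of the block-cut tree of $G$, i.e.\ a block $B_k$ containing exactly one cut vertex $v$ of $G$. Setting $G_2=B_k$ and letting $G_1$ be the subgraph induced on $V(G)\setminus(V(B_k)\setminus\{v\})$, we have $V(G_1)\cap V(G_2)=\{v\}$ and $G=G_1\cup G_2$, so Theorem~\ref{thirdlem} gives
\[\chi_{vi,1}(G)=\max\{\chi_{vi,1}(G_1),\chi_{vi,1}(B_k),\deg_G(v)+2\}.\]
The blocks of $G_1$ are precisely $B_1,\ldots,B_{k-1}$, so the induction hypothesis rewrites $\chi_{vi,1}(G_1)$ as the maximum of the numbers $\chi_{vi,1}(B_i)$ for $i<k$ together with the quantities $\deg_{G_1}(u)+2$ over the cut vertices $u$ of $G_1$. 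Substituting this into the previous display yields an upper bound of the form claimed by the corollary.

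What remains is bookkeeping: I must show this upper bound is dominated by the right-hand side of the corollary, which uses cut vertices and degrees of $G$, not of $G_1$. Every cut vertex $u$ of $G_1$ is also a cut vertex of $G$, since it lies in at least two of the blocks $B_1,\ldots,B_{k-1}$, which remain blocks of $G$; moreover $\deg_{G_1}(u)\leq\deg_G(u)$, with strict inequality possible only when $u=v$, in which case the term $\deg_G(v)+2$ already appears explicitly from Theorem~\ref{thirdlem}. The matching lower bound is straightforward: each $B_i$ is a subgraph of $G$, hence $\chi_{vi,1}(G)\geq\chi_{vi,1}(B_i)$, while the argument from the introduction (which shows $\chi_{vi}(G)\geq\Delta(G)+2$ by exhibiting a clique $I_1[v]\cup\{(u,v)\}$ for any neighbor $u$ of a maximum-degree vertex $v$) applies verbatim to any vertex $w$, yielding $\chi_{vi,1}(G)\geq\chi_{vi}(G)\geq\deg_G(w)+2$ for every cut vertex $w$. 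The main obstacle is precisely this identification of cut vertices of $G_1$ as cut vertices of $G$ with (possibly) smaller degree, ensuring the inductive maximum introduces no spurious terms beyond those listed in the corollary.
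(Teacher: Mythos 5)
Your proposal is correct and coincides with the argument the paper intends: the corollary is stated as an immediate consequence of Theorem~\ref{thirdlem}, and the induction on blocks you describe (splitting off a leaf block of the block-cut tree at its unique cut vertex, applying Theorem~\ref{thirdlem}, and noting that cut vertices of the remaining graph are cut vertices of $G$ with equal degree except possibly at the attachment vertex, whose term $\deg_G(v)+2$ appears anyway) is exactly the bookkeeping the paper leaves implicit. Your separate verification of the lower bound via subgraph monotonicity and the clique $I_1[w]\cup\{(u,w)\}$ is sound, though strictly redundant since Theorem~\ref{thirdlem} already gives equality at each step.
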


In \cite{special}, it is shown that there is a relationship between $vi$-simultaneous coloring of graphs and two parameters $\chi''(G)$ and $st(G)$, where $st(G)$ is the star arboricity of the graph $G$ which was introduced by Algor and Alon \cite{star}.  The star arboricity of a graph is the minimum number of star forests (forests whose connected components are stars) in $G$ whose union covers all edges of $G$. 
\begin{theorem}{\em{\cite{special}}}\label{start1}
	For any graph $G$, we have $\chi_{vi}(G)\leq \chi''(G)+st(G)$.
\end{theorem}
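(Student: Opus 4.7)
The plan is to build a $vi$-simultaneous proper $(t+s)$-coloring by combining a proper total coloring of $G$ (using $t=\chi''(G)$ colors) with a star-arboricity decomposition of its edges (into $s=st(G)$ star forests). Concretely, I would fix a proper total coloring $\phi:V(G)\cup E(G)\to[t]$ and star forests $F_1,\dots,F_s$ covering $E(G)$. For each edge $e$, let $i(e)$ be the index of the star forest containing $e$; inside that star exactly one endpoint of $e$ is the center (call it $c_e$) and the other is a leaf (call it $\ell_e$). I would then define a coloring $\psi$ on $V(G)\cup I(G)$ with values in $[t+s]$ by the rules: vertices keep their colors under $\phi$; each incidence of the form $(c_e,e)$ inherits the color $\phi(e)$; and each incidence of the form $(\ell_e,e)$ is assigned the new color $t+i(e)$.

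The verification of propriety splits into a routine part and a delicate part. For the routine part, any conflict involving a vertex is killed immediately, since vertex/vertex, vertex/inherited-edge-color, and vertex/new-color conflicts are all handled either by the total-coloring property of $\phi$ or by the disjointness of the palettes $[t]$ and $\{t+1,\dots,t+s\}$. A conflict between two inherited incidence colors $\phi(e)$ and $\phi(f)$ on adjacent incidences is possible only when $e$ and $f$ share an endpoint, in which case $\phi(e)\neq\phi(f)$ by the properness of $\phi$. An inherited color never clashes with a new color because the two palettes are disjoint.

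The one non-routine case, and the main obstacle, is two incidences $(\ell_e,e)$ and $(\ell_f,f)$ that both receive a new color and turn out to be adjacent in $\mathcal I(G)$; equality of their colors forces $i(e)=i(f)=i$. I would rule this out by checking the three adjacency conditions of Definition~\ref{inci} in turn: (1) if $\ell_e=\ell_f$, this common vertex would be a leaf of two distinct edges of the star forest $F_i$, contradicting that a leaf of a star forest has degree $1$; (2) if $e=f$, the two incidences coincide; (3) if the edge $\{\ell_e,\ell_f\}$ equals $e$ or $f$, say $e$, then $\ell_f=c_e$ must play both the center role in the star of $F_i$ containing $e$ and the leaf role in the star of $F_i$ containing $f$, impossible since each vertex plays exactly one role in the unique component of $F_i$ to which it belongs. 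This single star-forest observation, together with the routine cases above, shows that $\psi$ is a valid $vi$-simultaneous proper coloring using $t+s=\chi''(G)+st(G)$ colors, which yields the stated bound.
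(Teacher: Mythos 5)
Your proof is correct. There is nothing in this paper to compare it against: Theorem~\ref{start1} is stated here as an imported result from \cite{special}, with no proof given in the text. Your construction --- keep the total coloring $\phi$ on vertices, give each center incidence $(c_e,e)$ the edge color $\phi(e)$, and give each leaf incidence $(\ell_e,e)$ the fresh color $t+i(e)$ --- is the natural argument for this bound, and you correctly isolate and settle the only non-trivial case: two leaf incidences sharing a new color must come from the same star forest $F_i$, and the three adjacency conditions of Definition~\ref{inci} each force a vertex of $F_i$ to be simultaneously a center and a leaf (or a leaf of two edges) within its unique star component, which is impossible. Two pedantic points you may wish to make explicit: since the star forests merely cover $E(G)$, one should fix a single index $i(e)$ per edge; and for a one-edge star the center/leaf designation is an arbitrary but fixed choice, which your argument accommodates since each vertex still receives exactly one role.
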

\begin{theorem}{\em{\cite{special}}}\label{upperbound-list-vi1}
	$\chi_{vi,1}(G)\leq\max\{\chi(G^2),\chi_{l}(G)+\Delta(G)+1\}$ for any nonempty graph $G$. Specially, if $\chi(G^2)\geq\chi_{l}(G)+\Delta(G)+1$, then $\chi_{vi,1}(G)=\chi(G^2)$.
\end{theorem}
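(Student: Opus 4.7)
The plan is to reformulate a $vi$-simultaneous $(k,1)$-coloring as a pair of maps $(c_V,c_2):V(G)\to[k]\times[k]$. In such a coloring, the set $I_2(v)$ is monochromatic, so I can let $c_2(v)\in[k]$ denote its unique color; together with the vertex map $c_V$ this determines the whole coloring, because every incidence $(x,\{x,y\})$ lies in $I_2(y)$ and hence receives color $c_2(y)$. A case analysis over the three adjacency rules of Definition~\ref{inci}, together with the vertex--incidence constraints, shows that a pair $(c_V,c_2)$ arises from a valid $(k,1)$-coloring if and only if: (a) $c_V$ is a proper coloring of $G$; (b) $c_2$ is a proper coloring of $G^2$; (c) $c_V(v)\neq c_2(u)$ for every $v\in V(G)$ and every $u\in N_G[v]$.

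Set $k=\max\{\chi(G^2),\chi_l(G)+\Delta(G)+1\}$. I would first take any proper $\chi(G^2)$-coloring of $G^2$ as $c_2$, using colors from $[k]$. Since $N_G[v]$ is a clique in $G^2$, the set $F(v):=\{c_2(u):u\in N_G[v]\}$ has exactly $d_G(v)+1$ elements, so the list $L(v):=[k]\setminus F(v)$ satisfies $|L(v)|\geq k-\Delta(G)-1\geq\chi_l(G)$. By the definition of list chromatic number, $G$ admits a proper $L$-coloring $c_V$, and the pair $(c_V,c_2)$ satisfies (a)--(c), producing a $vi$-simultaneous $(k,1)$-coloring and giving $\chi_{vi,1}(G)\leq k$.

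For the second assertion, when $\chi(G^2)\geq\chi_l(G)+\Delta(G)+1$ the bound above collapses to $\chi_{vi,1}(G)\leq\chi(G^2)$, while condition~(b) of the reformulation shows that every $(k,1)$-coloring must satisfy $k\geq\chi(G^2)$, yielding equality. The only real obstacle is the case analysis underlying (a)--(c); the delicate case is rule~(3) of Definition~\ref{inci}, applied to incidences of the form $(v,\{v,w\})$ and $(w,\{w,z\})$ with $z\in N_G(w)\setminus\{v\}$, which translates to $c_2(w)\neq c_2(z)$. Because $w$ and $z$ are adjacent in $G$, this is already implied by properness of $c_2$ on $G^2$, so rule~(3) adds no new restriction and the equivalence is confirmed. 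The remaining steps are then routine.
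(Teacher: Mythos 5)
Your proposal is correct, but there is nothing in this paper to compare it against: Theorem~\ref{upperbound-list-vi1} is quoted from the companion paper \cite{special} and no proof of it appears here. Judged on its own, your argument is a complete and valid proof. The key reformulation is right: in a $(k,1)$-coloring every incidence $(x,\{x,y\})$ belongs to $I_2(y)$, so the whole incidence coloring is captured by the single map $c_2$, and the three adjacency rules of Definition~\ref{inci} collapse exactly to properness of $c_2$ on $G^2$ (rules (1) and (2) give the distance-two and distance-one pairs, respectively, which is also what makes the converse direction --- and hence the lower bound $\chi_{vi,1}(G)\geq\chi(G^2)$ used in the equality case --- go through), while the vertex--incidence constraints collapse to $c_V(v)\neq c_2(u)$ for $u\in N_G[v]$. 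One point worth flagging: the present paper's wording of ``incident'' literally ties a vertex $v$ only to $I_1(v)$, yet the intended conflict graph (adjacency in $G^{\frac{3}{3}}$, as used in the paper's own lower-bound argument) also ties $v$ to $I_2(v)$; by including $u=v$ in condition (c) you impose the stronger constraint, so your constructed coloring is valid under the intended definition --- this is the right choice. The quantitative step is also sound: since $N_G[v]$ is a clique in $G^2$, the forbidden set $F(v)$ has $d_G(v)+1\leq\Delta(G)+1$ colors, leaving lists of size at least $k-\Delta(G)-1\geq\chi_l(G)$, which by definition of choosability suffices. The only degenerate case (isolated vertices, where $I_2(v)=\varnothing$ and $c_2(v)$ is undefined) affects neither direction, since such vertices are also isolated in $G^2$.
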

\section{Main Theorems}\label{sec5}
In this section, we investigate the $vi$-simultaneous chromatic number of outerplanar graphs with defined maximum degree and girth. From now on, black vertices are corresponded to the $t$-vertices and white vertices are corresponded to the $i$-vertices of a graph in the figures.\\
A graph is said to be planar, if it can be drawn in the plane so that its edges intersect only at their ends. An outerplanar graph is a graph that has a planar drawing for which all vertices belong to the outer face of the drawing and in a 2-conneced outerplanar graph, the outer face is a hamiltonian cycle. The girth of a graph $G$, denoted by $g(G)$, is the length of a shortest cycle in $G$. A block of the graph $G$ is a maximal 2-connected subgraph of $G$ and if $G$ is 2-connected, then $G$ itself is called a block. Also, an end block of $G$ is a block with a single cut vertex. We know that each connected graph with at least one cut vertex has at least two end blocks.\\
A face $f \in F(G)$ with degree $k$ is denoted by its boundary walk $f=[v_1v_2\ldots v_k]$, where $v_1,v_2,\ldots,v_k$ are its vertices in the clockwise order. We say that a face $f=[v_1v_2\ldots v_k]$ is an {\it{end face}} of an outerplane graph $G$, if $v_2,\ldots,v_{k-1}$ are all 2-vertices in $G$.\\
We know that any outerplanar graph is a 2-degenerated graph. So by use of Theorem \ref{kdegenerated} we can achieve the following upper bound for $\chi_{vi,2}(G)$.  
\begin{proposition}\label{delta+4}
	If $G$ is an outerplanar graph with maximum degree $\Delta$, then $\chi_{vi,2}(G)\leq\Delta+4$. 
\end{proposition}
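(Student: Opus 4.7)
The plan is to reduce the claim to a direct application of Theorem \ref{kdegenerated}. The main observation is the classical fact that every outerplanar graph is $2$-degenerated: in any outerplanar graph one can always find a vertex of degree at most $2$ (for example, in the weak dual of a $2$-connected outerplanar drawing, any leaf face contributes such a vertex on the outer boundary), and deleting it leaves an outerplanar graph, so the property is hereditary. I would state this as a one-line lemma (or cite it as folklore) at the top of the proof.

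Once $2$-degeneracy is in hand, the case $\Delta(G) \geq 2$ is immediate: Theorem \ref{kdegenerated} applied with $k=2$ gives
\[
\chi_{vi,2}(G) \leq \Delta(G) + 2\cdot 2 = \Delta(G) + 4,
\]
which is exactly what is required. So essentially all the work is already packaged inside Theorem \ref{kdegenerated}.

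It remains to mop up the small-degree cases not covered by that theorem's hypothesis $\Delta(G)\geq 2$. If $\Delta(G)=0$ then $G$ is edgeless, $I(G)=\varnothing$, and $\chi_{vi,2}(G)=1 \leq 4$. If $\Delta(G)=1$ then $G$ is a matching (a forest), and Theorem \ref{tree} gives $\chi_{vi,1}(G)=4 \leq \Delta(G)+4=5$, and since $\chi_{vi,2}(G)\leq \chi_{vi,1}(G)$ by the monotonicity noted after Definition \ref{(k,l)IncidenceCol}, we are done.

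I do not expect any real obstacle here; the only potentially delicate point is making sure the reader accepts the $2$-degeneracy of outerplanar graphs without a detour, but this is standard and can be dispatched in one sentence. The proposition therefore serves mainly as a baseline bound $\Delta+4$ against which the sharper bounds in Theorems \ref{1} and \ref{2} (which shave this to $\Delta+3$ or $\Delta+2$) will be measured in the rest of Section 3.
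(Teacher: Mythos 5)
Your proposal is correct and is essentially the paper's own argument: the paper likewise notes that every outerplanar graph is $2$-degenerated and invokes Theorem \ref{kdegenerated} with $k=2$ to conclude $\chi_{vi,2}(G)\leq\Delta(G)+4$. Your extra handling of the cases $\Delta(G)\leq 1$ (via Theorem \ref{tree} and the monotonicity $\chi_{vi,2}\leq\chi_{vi,1}$) is a minor tidiness the paper omits, but it does not change the approach.
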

In continue we improve this upper bound to $\Delta(G)+3$ but we need the Theorem \ref{delta+4} in the proof of Theorem \ref{delta>3}. The following lemma easily follows from Proposition 6.1.20 in \cite{west2}.
\begin{lemma}\label{lemmaouterplanar}
	For any nonempty outerplanar graph $G$ at least one of the following holds:\\
	(i) $G$ has a 1-vertex,\\
	(ii) $G$ has two adjacent 2-vertices,\\
	(iii) $G$ has a $2$-vertex with adjacent neighbors.\\
	Especially, any 2-connected outerplanar graph holds in (ii) or (iii).
\end{lemma}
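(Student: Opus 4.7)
The plan is to reduce the problem to the $2$-connected case by working inside an end block of $G$, and then to analyze that block via the tree structure of its weak dual. First, if $G$ has a vertex of degree at most $1$, case (i) holds at once, so I assume $\delta(G)\ge 2$. I would then fix an outerplanar embedding of $G$, choose an end block $B$ with cut vertex $c$ (set $B=G$ and ignore $c$ if $G$ is already $2$-connected), and record the useful fact that every $v\in V(B)\setminus\{c\}$ satisfies $d_G(v)=d_B(v)$, since the remaining blocks meet $B$ only at $c$. Because $B$ is $2$-connected and outerplanar, its outer face is a Hamiltonian cycle $C_B$.

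Next I would split on whether $B$ has a chord. If $B=C_B$ is chordless then $|V(B)|\ge 3$ and all vertices of $V(B)\setminus\{c\}$ are $2$-vertices of $G$: picking two consecutive ones gives (ii) when $|V(B)|\ge 4$, and when $|V(B)|=3$ the two non-cut vertices of the triangle have adjacent neighbors, giving (iii). Otherwise $B$ has at least one chord and its weak dual (nodes: inner faces; edges: pairs of faces sharing a chord) is a tree with at least two leaves. For any leaf face $F$, the boundary decomposes into exactly one chord $uv$ plus a subpath $u=w_0,w_1,\dots,w_k=v$ of $C_B$ with $k\ge 2$, and every interior $w_i$ ($1\le i\le k-1$) has degree exactly $2$ in $B$: any further edge at $w_i$ would be a chord splitting $F$, contradicting that $F$ is a face. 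Hence if $k\ge 3$ the consecutive $w_1,w_2$ are adjacent $2$-vertices of $B$, while if $k=2$ the unique $w_1$ has its two neighbors $u,v$ joined by the chord.

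The main obstacle is then to pass from $2$-vertices of $B$ to $2$-vertices of $G$, i.e., to avoid producing the cut vertex $c$. I would base this on a single observation: $c$ can appear as an interior vertex of at most one inner face of $B$, because being interior to a face requires both of its face-boundary edges at $c$ to lie on $C_B$, and $c$ has only the two outer-cycle edges available in $B$. Since the weak dual of $B$ has at least two leaves, I can always select a leaf face $F$ whose boundary path avoids $c$ in its interior; the $2$-vertex extracted from that $F$ by the previous paragraph then lies in $V(G)\setminus\{c\}$, has the same degree in $G$ as in $B$, and certifies (ii) or (iii) in $G$, completing the plan.
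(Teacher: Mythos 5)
Your argument is correct, and it takes a genuinely different route from the paper, because the paper offers no proof at all: it simply remarks that the lemma ``easily follows from Proposition 6.1.20 in \cite{west2}'' and moves on. You instead give a self-contained structural proof: reduce to a leaf block $B$, where every vertex other than the cut vertex $c$ retains its $G$-degree; then either $B$ is a chordless cycle (handled directly), or the weak dual of $B$ is a tree with at least two leaves. Your key steps check out: the interior vertices of a leaf face have degree exactly $2$ in $B$, since a third edge at such a vertex would have to enter that face; adjacent interior vertices yield (ii), while a lone interior vertex has its two neighbors joined by the unique chord of the leaf face, yielding (iii); and since each outer-cycle edge at $c$ lies on exactly one inner face, $c$ is interior to at most one inner face, so among the two or more leaves you can always pick one avoiding $c$. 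In effect you have proved from scratch the existence of the ``end faces'' that the paper uses throughout Section 3, which is precisely what its citation to West conceals; your version buys self-containedness at the cost of length. Three cosmetic repairs are worth making: (1) a vertex of degree $0$ is not a $1$-vertex, so isolated vertices should be discarded before assuming $\delta(G)\ge 2$; (2) if $G$ is disconnected and some component has no cut vertex, the phrase ``end block with cut vertex $c$'' should be replaced by ``leaf block of the block forest of a component, with $c$ its cut vertex if one exists''; (3) you should say explicitly why $B\neq K_2$ (its non-cut vertex would be a $1$-vertex of $G$), which is what entitles you to call $B$ $2$-connected and invoke the Hamiltonian outer cycle. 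None of these affects the substance of the proof.
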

\subsection{Proof of the Theorem~\ref{1}}
In oreder to prove of the Theorem~\ref{1}, first we are going to prove it for 2-connected outerplanar graphs with maximum degree at least $3$. 
Before we get into that, it should be noted that by Corollary~\ref{cycleandpath}, Theorem~\ref{1} is true for all outerplanar graphs with maximum degree at most two.\\
A graph $G$ is called subcubic, if $\Delta(G)\leq3$. Also, a 3-regular graph is called a cubic graph. 
\begin{theorem}\label{2connectedout}
	If $G$ is a 2-connected outerplanar graph with $\Delta(G)=3$, then $\chi_{vi,2}(G)\leq 6$. Moreover, the bound is tight.
\end{theorem}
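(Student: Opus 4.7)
The lower bound $\chi_{vi,2}(G)\geq \Delta(G)+2 = 5$ is immediate from the general argument recalled in Section~\ref{sec1}; for tightness at $6$, I would exhibit a specific small $2$-connected outerplanar graph with $\Delta=3$ whose $vi$-simultaneous $(5,2)$-coloring can be ruled out by a short direct argument. A natural candidate is a graph such as $K_4-e$, or two triangles sharing an edge, where the triangle forces a large clique in the incidence structure and the tightened $|c(I_2(v))|\le 2$ condition at the $3$-vertices leaves no room inside a palette of $5$ colors.

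For the upper bound I would prove by induction on $|V(G)|$ the slightly stronger statement that every $2$-connected outerplanar graph $H$ with $\Delta(H)\le 3$ satisfies $\chi_{vi,2}(H)\le 6$. Base cases with $\Delta(H)\leq 2$ reduce to cycles and are handled by Corollary~\ref{cycleandpath}, and the very small $\Delta=3$ cases (such as $K_4-e$) are handled by direct construction. For the induction step, I invoke Lemma~\ref{lemmaouterplanar}, which, since $H$ is $2$-connected, produces one of two reducible configurations along the outer Hamiltonian cycle: either (a) two adjacent $2$-vertices $u,v$ appearing as a subpath $x-u-v-y$, or (b) a $2$-vertex $v$ whose neighbors $u,w$ are themselves adjacent, forming a triangle $uvw$.

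In case (b), set $G' := H-v$; removing a $2$-vertex from a $2$-connected outerplanar graph on at least four vertices preserves $2$-connectedness (the outer Hamiltonian cycle simply shrinks by one), so $G'$ is a smaller $2$-connected outerplanar graph with $\Delta(G')\leq 3$. Applying the inductive hypothesis yields a $(6,2)$-coloring $c'$ of $G'$, and I would extend it to $H$ by choosing colors for $v$ and the four new incidences $(u,uv),(v,uv),(v,vw),(w,vw)$, using that the $(6,2)$ constraint at $u$ and $w$ already restricts $c'$ enough to leave available colors. In case (a), set $G' := (H-\{u,v\})\cup\{xy\}$, adding the edge $xy$ only if it was absent; $G'$ is again a smaller $2$-connected outerplanar graph with $\Delta(G')\leq 3$. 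Given a $(6,2)$-coloring $c'$ of $G'$, the key trick is to define $c(u,xu):=c'(y,xy)$ and $c(v,vy):=c'(x,xy)$, which keeps $c(I_2(x))$ and $c(I_2(y))$ literally unchanged from $c'$, so the $(6,2)$ condition at $x$ and $y$ is inherited for free.

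The main obstacle is verifying that in both cases the remaining choices---namely $c(u),c(v)$ together with the incidences $(x,xu),(u,uv),(v,uv),(y,vy)$ in case (a), and the analogous four unknowns in case (b)---can always be completed within the $6$-color palette while simultaneously (i) avoiding conflicts with the colors already fixed around $x,y$ (respectively $u,w$), and (ii) keeping $|c(I_2(u))|,|c(I_2(v))|\leq 2$ at the newly introduced low-degree vertices. The number of forbidden colors per element is at most $\Delta+2=5$, leaving one free color, but the $(6,2)$ constraints entangle the choices. A short case analysis on the size and intersection of $c'(I_2(x)),c'(I_2(y))$ (or $c'(I_2(u)),c'(I_2(w))$) should show that in every situation the spare color can be used as a wildcard, and the remaining four colors can be paired up so that $I_2(u)$ and $I_2(v)$ each use only two colors. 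This closes the induction and establishes $\chi_{vi,2}(H)\leq 6$.
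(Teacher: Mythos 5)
Your overall skeleton matches the paper's: induction on the order, a local reducible configuration along the outer cycle, a base case and tightness witness given by $K_4-e$ (the paper's Figure with the order-$4$ graph, and its Theorem on the subgraphs $G_1,\ldots,G_4$ confirms that $5$ colors are impossible there). The paper reduces via end faces of length $3$, $4$, or $\geq 5$ (adding a chord in the last case), while you reduce via Lemma~\ref{lemmaouterplanar}; these are essentially equivalent, and your case (a) (two adjacent $2$-vertices, delete both and add $xy$) together with the color-transfer trick $c((u,xu)):=c'((y,xy))$, $c((v,vy)):=c'((x,xy))$ does go through: every element that remains uncolored then has few enough fixed constraints, exactly as in the paper's treatment of faces of length $\geq 4$.

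The genuine gap is in your case (b), the triangle configuration, which is precisely where the paper's proof does its real work. Your claim that ``the $(6,2)$ constraint at $u$ and $w$ already restricts $c'$ enough to leave available colors'' is false: there is a valid $(6,2)$-coloring of $G'=H-v$ that admits \emph{no} extension at all. Writing $(a,b)$ for the incidence $(a,\{a,b\})$ and letting $z_u,z_w$ be the third neighbors of $u,w$ in $H$, take $c'(u)=1$, $c'(w)=4$, $c'((u,w))=2$, $c'((w,u))=3$, $c'((u,z_u))=4$, $c'((z_u,u))=5$, $c'((w,z_w))=1$, $c'((z_w,w))=6$; this is locally proper and satisfies the $(6,2)$ condition at $u$ and $w$. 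Then $(u,v)$ sees colors $1,2,3,4,5$ and is forced to $6$; $(w,v)$ sees $1,2,3,4,6$ and is forced to $5$; consequently $(v,u)$ is forced to $3$ and $(v,w)$ to $2$; and now $v$ is adjacent to elements of all six colors, so the extension fails no matter what. Any correct proof must either recolor part of $c'$ --- this is exactly what the paper does in its last subcase, changing the color of the chord incidence $(v_i,v_{i+2})$ (your $(u,w)$) to the sixth color before coloring the new elements --- or strengthen the inductive hypothesis so that such colorings of $G'$ cannot be handed back. Since your plan treats the inductive coloring as a black box and extends it greedily, it cannot be completed as stated; the missing idea is the recoloring step.
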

\begin{proof}{
		We prove the theorem by induction on the order of $G$. There is only one 2-connected outerplanar graphs of order $4$  with $\Delta(G)=3$ which is $vi$-simultaneous $(6,1)$-colorable (See Figure~\ref{K4-e}). Easily one can show that 6 colors are necessary for this graph.
		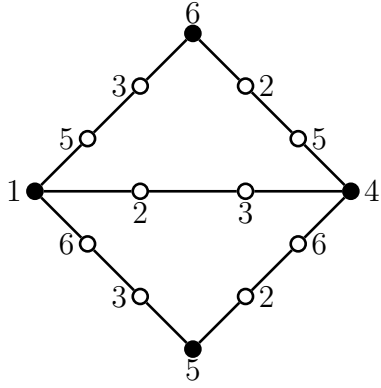
\begin{figure}[h]
			\begin{center}
				\begin{tikzpicture}[scale=0.7]
				\tikzset{vertex/.style = {shape=circle,draw, line width=1pt, opacity=1.0, inner sep=2pt}}
				\tikzset{vertex1/.style = {shape=circle,draw, fill=black, line width=1pt,opacity=1.0, inner sep=2pt}}
				\tikzset{edge/.style = {-,> = latex', line width=1pt,opacity=1.0}}
				\node[vertex1] (a1) at (-3,0) {};
				\node  at (-3.4,0) {$1$};
				\node[vertex] (a2) at (-2,1) {};
				\node  at (-2.4,1) {$5$};
				\node[vertex] (a3) at (-1,2) {};
				\node  at (-1.4,2) {$3$};
				\node[vertex1] (a4) at (0,3) {};
				\node  at (0,3.4) {$6$};
				\node[vertex] (a5) at (1,2) {};
				\node  at (1.4,2) {$2$};
				\node[vertex] (a6) at (2,1) {};
				\node  at (2.4,1) {$5$};
				\node[vertex1] (a7) at (3,0) {};
				\node  at (3.4,0) {$4$};
				\node[vertex] (a8) at (2,-1) {};
				\node  at (2.4,-1) {$6$};
				\node[vertex] (a9) at (1,-2) {};
				\node  at (1.4,-2) {$2$};
				\node[vertex1] (a10) at (0,-3) {};
				\node  at (0,-3.4) {$5$};
				\node[vertex] (a11) at (-1,-2) {};
				\node  at (-1.4,-2) {$3$};
				\node[vertex] (a12) at (-2,-1) {};
				\node  at (-2.4,-1) {$6$};
				\node[vertex] (b1) at (-1,0) {};
				\node  at (-1,-0.4) {$2$};
				\node[vertex] (b2) at (1,0) {};
				\node  at (1,-0.4) {$3$};
				\draw[edge] (a1) -- (a2) -- (a3) -- (a4) -- (a5) -- (a6) -- (a7) -- (a8) -- (a9) -- (a10) -- (a11) -- (a12) -- (a1) -- (b1) -- (b2) -- (a7);
				\end{tikzpicture}
				\caption{$vi$-simultaneous $(6,1)$-coloring 2-connected outerplanar graph of order four.}
				\label{K4-e}
			\end{center}
		\end{figure}
		Now suppose that $G$ is a 2-connected outerplanar graph of order $n\geq 5$ with $\Delta(G)=3$, and the statement is true for all 2-connected outerplanar graphs with maximum degree 3 of order less than $n$. Since $G$ is 2-connected and $\Delta(G)=3$, $G$ has an end face $f=[v_iv_{i+1} \ldots v_j]$ of degree at least 3.\\
		First, suppose that $f=[v_iv_{i+1}v_{i+2}]$ and $G'=G-v_{i+1}$. If $\Delta(G')=2$, then by use of Theorem \ref{cycles}, $G'$ has a proper $vi$-simultaneous $(5,1)$-coloring, named $c$ with colors in $[5]$. If $\Delta(G')=3$, then by the  induction hypothesis, $G'$ has a proper $vi$-simultaneous $(6,2)$-coloring $c$ with colors in $[6]$. Now, it suffices to extend $c$ to a proper $vi$-simultaneous $(6,2)$-coloring for $G$.\\
		Since $v_i$, $v_{i+2}$, $(v_i,v_{i+2})$ and $(v_{i+2},v_i)$ are pairwise adjacent in $(G')^{\frac{3}{3}}$, they must have different colors. Without loss of generality, suppose that $c(v_i)=1$, $c((v_i,v_{i+2}))=2$, $c((v_{i+2},v_i))=3$ and $c(v_{i+2})=4$. If at least one of colors $1$ or $4$ is available for one of the $i$-vertices $(v_i,v_{i+1})$, $(v_{i+1},v_i)$, $(v_{i+1},v_{i+2})$ and $(v_{i+2},v_{i+1})$, then with a simple review $c$ can be extended to a $(6,2)$-proper coloring $c'$ for $G^{\frac{3}{3}}$. In addition, if $c((v_{i-1},v_i))=4$, then color 4 is available for $(v_{i+1},v_i)$ and if $c((v_{i+3},v_{i+2}))=1$, then color 1 is available for $(v_{i+1},v_{i+2})$.  Otherwise, the $i$-vertices $(v_i,v_{i-1})$ and $(v_{i+2},v_{i+3})$ must have colors $4$ and $1$, respectively. If at least one of colors $5$ or $6$ is available for $i$-vertices $(v_i,v_{i+1})$ and $(v_{i+2},v_{i+1})$, then color two $i$-vertices $(v_i,v_{i+1})$ and $(v_{i+2},v_{i+1})$ with their available colors in $\{5,6\}$. Also, assign colors $2$ and $3$ to $i$-vertices $(v_{i+1},v_{i+2})$ and $(v_{i+1},v_{i})$, respectively and color $v_{i+1}$ with color $6$. Otherwise, the $i$-vertices $(v_{i-1},v_{i})$ and $(v_{i+3},v_{i+2})$ must have colors $5$ and $6$. Without loss of generality, let $c((v_{i-1},v_{i}))=5$ and $c((v_{i+3},v_{i+2}))=6$. Now we change the color of $(v_i,v_{i+2})$ to 6 and then we use color 2 for the $i$-vertices $(v_i,v_{i+1})$ and $(v_{i+2},v_{i+1})$, color 3 for $(v_{i+1},v_i)$, color 6 for $(v_{i+1},v_{i+2})$ and color 5 for $v_{i+1}$. Easily the extended coloring is a $vi$-simultaneous $(6,2)$-coloring for $G$.\\
Now suppose that $f=[v_iv_{i+1}v_{i+2}v_{i+3}]$. There are at least one available color for $i$-vertices $(v_i,v_{i+1})$ and $(v_{i+3},v_{i+2})$. In addition, we color vertices $(v_{i+1},v_{i})$ and $v_{i+2}$ with color 3, the vertices $v_{i+1}$ and $(v_{i+2},v_{i+3})$ with color 2, the vertex $(v_{i+1},v_{i+2})$ with color 1 and the vertex $(v_{i+2},v_{i+1})$ with color 4. Easily one can show that, this coloring is a $vi$-simultaneous $(6,2)$-coloring for $G$.\\
If $f=[v_iv_{i+1}v_{i+2}\ldots v_{j}]$ and $j\geq i+4$, then we consider the graph $G'=G+e$ where $e=\{v_{i+1},v_{i+3}\}$ and similar to the first case, we prove that $\chi_{vi,2}(G')\leq6$ which concludes that $\chi_{vi,2}(G)\leq6$.
}\end{proof}
\begin{theorem}\label{delta3}
	For any outerplanar graph $G$ with maximum degree $3$, $\chi_{vi,2}(G)\leq 6$.
\end{theorem}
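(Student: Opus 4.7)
I would proceed by induction on $|V(G)|$; the base cases with $|V(G)|\le 3$ follow from Corollary~\ref{cycleandpath}, Theorem~\ref{tree}, and Theorem~\ref{complete}, each giving a bound of at most $5$. In the inductive step, if $G$ is disconnected I apply the hypothesis to each component separately (since $\chi_{vi,2}$ of a disjoint union is the maximum over components), so I may assume $G$ is connected.

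The key reduction is the removal of a leaf. If $G$ has a leaf $u$ with neighbor $v$, set $G'=G-u$; by induction there is a $(6,2)$-simultaneous coloring $c'$ of $G'$, which I extend by choosing colors for $u$, $(v,\{v,u\})$, and $(u,\{v,u\})$. Writing $d=\deg_{G'}(v)\in\{0,1,2\}$, $A=c'(v)$, $B=c'(I_1^{G'}(v))$ and $C=c'(I_2^{G'}(v))$, the new color $b_3$ of $(v,\{v,u\})$ must avoid $\{A\}\cup B\cup C$ (from the adjacencies at $v$ in $G^{\frac{3}{3}}$); the new color $c_3$ of $(u,\{v,u\})$ must avoid $\{A,b_3\}\cup B$ and satisfy $|C\cup\{c_3\}|\le 2$ to preserve the $(6,2)$-constraint; and the color of $u$ must avoid $\{A,b_3,c_3\}$. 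A short case analysis on $d$ and $|C|$ shows that valid choices always exist in $[6]$: the tightest subcase is $d=2$ with $|C|=2$, where $b_3$ is forced to be the unique color in $[6]\setminus(\{A\}\cup B\cup C)$, $c_3$ is forced into $C$ (both elements are available since neither equals $b_3$), and $u$ then has three valid colors.

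If instead $G$ has minimum degree at least $2$, I split on the existence of a cut edge. If $G$ has a cut edge $e$, Theorem~\ref{secondlem} with $l=2$ gives $\chi_{vi,2}(G)=\max\{\chi_{vi,2}(H_1),\chi_{vi,2}(H_2)\}$; both $H_i$ are outerplanar with $\Delta\le 3$, and since $G$ has no leaves both are strictly smaller than $G$, so induction closes this case. Otherwise $G$ has no bridge, and any cut vertex $v$ would lie in at least two blocks, each (being non-trivial and bridge-free) contributing at least $2$ to $\deg_G(v)$ and so forcing $\deg_G(v)\ge 4$, contradicting $\Delta(G)=3$. Hence $G$ is $2$-connected and Theorem~\ref{2connectedout} gives $\chi_{vi,2}(G)\le 6$. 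The main obstacle in this plan is verifying the tight leaf-extension subcase above; the rest is essentially bookkeeping using Theorems~\ref{secondlem} and~\ref{2connectedout}.
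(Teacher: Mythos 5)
Your proposal is correct and takes essentially the same route as the paper: both arguments reduce to the $2$-connected case handled by Theorem~\ref{2connectedout}, split off cut edges between non-leaf endpoints using Theorem~\ref{secondlem}, and extend a coloring over a deleted leaf by the same direct analysis (your forced choice of $b_3$ and $c_3\in C$ in the tight subcase matches the paper's choice of $c((v,u))$ and $c((u,v))\in\{c((v_1,v)),c((v_2,v))\}$). The only difference is bookkeeping: you induct on $|V(G)|$ with an explicit bridgeless-implies-$2$-connected step, while the paper inducts on the number of blocks, observing that $\Delta=3$ forces one of any two blocks meeting at a cut vertex to be $K_2$.
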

\begin{proof}{
		We prove the theorem by the induction on the number of blocks. According to Theorem \ref{2connectedout}, the claim is true for blocks. Now suppose that $k\in\mathbb{N}\setminus\{1\}$ and $G$ is an outerplanar graph with $\Delta(G)=3$ and $k$ blocks and the statement is true for all outerplanar graphs with $\Delta=3$ and less than $k$ blocks. Since $\Delta(G)=3$, from any two blocks with a common cut vertex, one block must be $K_2$. So $G$ has at least one cut edge such as $e=\{u,v\}$. If $d_G(u)\neq1\neq d_G(v)$, then by use of Theorem \ref{secondlem}, the theorem follows by induction applied to $C_1+e$ and $C_2+e$ where $C_1$ and $C_2$ are the components of $G-e$. Now suppose that $d_G(u)=1<d_G(v)$. In this case, the graph $G-u$ is an outerplanar graph with $k-1$ blocks. Hence, by induction hypothesis, $G-u$ has a $vi$-simultaneous $(6,2)$-coloring, named $c$, with color set $[6]$. Suppose that $N_G(v)=\{v_1,v_2,u\}$. Now, color $i$-vertices $(v,u)$ with one color from $[6]\setminus\{c(v),c((v,v_1)),c((v_1,v)),c((v,v_2)),c((v_2,v))\}$, then color the $i$-vertex $(u,v)$ with the color $c((v_1,v))$ or $c((v_2,v))$ and finally color the vertex $u$ with one available color from $[6]\setminus\{c(v),c((v,u)),c((u,v))\}$. The given coloring is a $vi$-simultaneous $(6,2)$-coloring for $G$.
}\end{proof}
\begin{theorem}\label{delta>3}
	For any outerplanar graph $G$ with $\Delta(G)=\Delta\geq4$, $\chi_{vi,2}(G)\leq\Delta+3$.
\end{theorem}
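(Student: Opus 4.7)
The plan is to induct on $|V(G)|$, where we fix $\Delta \geq 4$ and prove the bound for every outerplanar graph with maximum degree at most $\Delta$. The base cases with $\Delta(G) \leq 3$ are subsumed by Theorem~\ref{delta3} (which gives $\chi_{vi,2}(G) \leq 6 \leq \Delta + 3$). For the inductive step, I invoke Lemma~\ref{lemmaouterplanar} to locate one of three reducible configurations in $G$: (i) a pendant vertex $u$ with neighbor $v$, (ii) two adjacent 2-vertices, or (iii) a 2-vertex $u$ whose two neighbors $x, y$ are themselves adjacent. In each case I delete the distinguished 1- or 2-vertex $u$ to obtain a smaller outerplanar graph $G'$ with $\Delta(G') \leq \Delta$, apply the induction hypothesis to obtain a $vi$-simultaneous $(\Delta + 3, 2)$-coloring $c$ of $G'$, and reduce the problem to extending $c$ to the new elements: $u$ itself and the $2 d_G(u)$ incidences around $u$.

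I carry out the extension by coloring, in order, the new first incidences $(w, u)$ for each $w \in N_G(u)$, then the new second incidences $(u, w)$, and finally $u$ itself. Two uniform counting arguments control the extension. First, for each new first incidence $(w, u) \in I_1^G(w)$, the forbidden set is contained in $\{c(w)\} \cup c(I_1^{G'}(w)) \cup c(I_2^{G'}(w))$, of size at most $1 + (d_G(w) - 1) + 2 = d_G(w) + 2 \leq \Delta + 2$, leaving at least one available color in $[\Delta + 3]$. Second, for each new second incidence $(u, w) \in I_2^G(w)$, the adjacency constraints forbid only $c(w)$ and $c((w, u))$; but the $(k,2)$-constraint further requires $c((u, w)) \in c(I_2^{G'}(w))$ when $|c(I_2^{G'}(w))| = 2$. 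This is feasible because every element of $I_2(w)$ is adjacent in $G^{\frac{3}{3}}$ to every element of $\{w\} \cup I_1(w)$, hence neither $c(w)$ nor $c((w, u))$ lies in $c(I_2^{G'}(w))$, so both colors there are admissible. Finally, $u$ itself has only a few already-colored neighbors in $G^{\frac{3}{3}}$, so its coloring is straightforward.

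The main obstacle is Case (iii), the triangle case, where the extra edge $xy$ creates additional adjacencies in $G^{\frac{3}{3}}$ among the new incidences near $u$. A key enabling observation is that $(x, u)$ and $(y, u)$ are non-adjacent in $G^{\frac{3}{3}}$ (they lie at distance $4$ in $G^{\frac{1}{3}}$), so they may receive the same color, providing the slack needed to reconcile the counting at $x$ and at $y$ simultaneously. In the sub-case where the straightforward choice still violates the $(k,2)$-constraint at one endpoint, I plan to repair the coloring by a local swap on $c$ --- exchanging the color of a suitably chosen already-colored incidence of $x$ or $y$ along a short two-colored path --- analogous to the alternation used at the end of the proof of Theorem~\ref{2connectedout}. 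Once this case analysis is verified in each of (i), (ii), and (iii), the extension is valid, the induction closes, and $\chi_{vi,2}(G) \leq \Delta + 3$.
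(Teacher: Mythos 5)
Your skeleton --- induction on the order, the three configurations of Lemma~\ref{lemmaouterplanar}, deletion of the low-degree vertex $u$, and extension of a $vi$-simultaneous $(\Delta+3,2)$-coloring $c$ of $G'$ --- is exactly the paper's, and your first counting claim (for the incidences $(w,u)$, $w\in N_G(u)$) is correct. The gap is in your second stage. The claim that the new second incidence $(u,w)$ is constrained only by $c(w)$ and $c((w,u))$ is false as soon as $d_G(u)=2$: writing $N_G(u)=\{w,w'\}$, the incidence $(u,w)$ is also adjacent in $G^{\frac{3}{3}}$ to $(w',u)$ (rule (3) of Definition~\ref{inci} applied to the edge $\{u,w'\}$), to $(u,w')$ (rule (1)), and in fact to every element of $I_1^G(w)$ (rule (3) applied to $\{u,w\}$). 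The last family is harmless once you restrict $c((u,w))$ to $c(I_2^{G'}(w))$, but the first two are not: $c((w',u))$ and $c((u,w'))$ were chosen with no regard to $c(I_2^{G'}(w))$, so when $|c(I_2^{G'}(w))|=2$ --- exactly the situation where the $(k,2)$-condition forces your hand --- those two colors can kill both admissible choices. This is your case (iii) obstruction, and there the proposal stops being a proof: the ``local swap on $c$ along a short two-colored path'' is never defined, and such exchanges are not innocent in this setting, since recoloring one incidence of $x$ can destroy the bound $|c(I_2(z))|\le 2$ at a third vertex $z$; a Kempe-type repair would need its own verification. (Case (ii) has the same defect but is patchable by ordering: color $(u,x)$ before $(u,v)$ and use that the surviving 2-vertex $v$ has $|c(I_2^{G'}(v))|=1$, so nothing is forced at $v$.)

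What is missing is precisely the paper's device for the triangle case: do not color the second incidences of $x$ and $y$ greedily, \emph{copy} them across the triangle, setting $c((u,x)):=c((y,x))$ and $c((u,y)):=c((x,y))$. These assignments are legal: $(u,x)$ and $(y,x)$ are non-adjacent in $G^{\frac{3}{3}}$ (no rule of Definition~\ref{inci} applies; this is the same distance-4 observation you made for $(x,u)$ and $(y,u)$, applied to the pair that actually needs it); $c((y,x))$ automatically avoids $c(x)$, $c(I_1^{G'}(x))$ and $c((x,u))$ because $(y,x)\in I_2^{G'}(x)$ and your stage-one choice of $(x,u)$ already excluded $c(I_2^{G'}(x))$; it avoids $c((y,u))$ because $(y,x)\in I_1^{G'}(y)$; and the two copied colors differ from each other because $(x,y)$ and $(y,x)$ are adjacent. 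Moreover the $(k,2)$-condition at $x$ and at $y$ now holds for free, since no new color enters $c(I_2(x))$ or $c(I_2(y))$. After this substitution the remaining elements $(x,u)$, $(y,u)$ and $u$ are colored greedily by your own counts (at most $\Delta+2$, $\Delta+2$ and $6\le\Delta+2$ forbidden colors, respectively), and the induction closes. Without it, case (iii) of your proposal is an announced plan, not an argument.
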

\begin{figure}[h]
	\begin{center}
		\begin{tikzpicture}[scale=0.8]
		\tikzset{vertex/.style = {shape=circle,draw, fill=black, line width=1pt,opacity=1.0, inner sep=2pt}}
		\tikzset{vertex1/.style = {shape=circle,draw, fill=white, line width=1pt,opacity=1.0, inner sep=2pt}}
		\tikzset{edge/.style = {-,> = latex', line width=1pt,opacity=1.0}}
		\node[vertex] (a1) at (0,0) {};
		\node () at  (0,0.4) {$3$};
		\node[vertex1] (a2) at (1,0) {};
		\node () at  (1,0.4) {$6$};
		\node[vertex1] (a3) at  (2,0) {};
		\node () at  (2,0.4) {$5$};
		\node[vertex] (a4) at  (3,0) {};
		\node () at  (3.4,-0.4) {$1$};
		\node[vertex1] (a5) at  (4,0) {};
		\node () at  (4,0.4) {$3$};
		\node[vertex1] (a6) at  (5,0) {};
		\node () at  (5,0.4) {$6$};
		\node[vertex] (a7) at (6,0) {};
		\node () at  (6.4,0) {$5$};
		\node[vertex] (a8) at (3,3) {};
		\node () at  (3,3.4) {$4$};
		\node[vertex1] (a9) at (3,2) {};
		\node () at  (2.6,2) {$6$};
		\node[vertex1] (a10) at (3,1) {};
		\node () at  (2.6,1) {$2$};
		\node[vertex1] (a11) at (3,-1) {};
		\node () at  (2.6,-1) {$4$};
		\node[vertex1] (a12) at (3,-2) {};
		\node () at  (2.6,-2) {$6$};
		\node[vertex] (a13) at (3,-3) {};
		\node () at  (3,-3.4) {$2$};
		\node[vertex1] (a14) at (1,-1) {};
		\node () at  (0.6,-1) {$4$};
		\node[vertex1] (a15) at (2,-2) {};
		\node () at  (1.6,-2) {$5$};
		\node[vertex1] (a16) at (4,-2) {};
		\node () at  (4.4,-2) {$3$};
		\node[vertex1] (a17) at (5,-1) {};
		\node () at  (5.4,-1) {$4$};
		\node[vertex1] (a18) at (5,1) {};
		\node () at  (5.4,1) {$2$};
		\node[vertex1] (a19) at (4,2) {};
		\node () at  (4.4,2) {$3$};
		\node () at  (3,-4) {$G_1$};
		\draw[edge] (a13) -- (a15) -- (a14) -- (a1) -- (a2) -- (a3) -- (a4) -- (a11) -- (a12) -- (a13) -- (a16) -- (a17) -- (a7) -- (a6) -- (a5) -- (a4) -- (a10) -- (a9) -- (a8) -- (a19) -- (a18) -- (a7);
		\node[vertex] (b1) at (9,2) {};
		\node[vertex] (b2) at (8,2) {};
		\node[vertex] (b3) at  (9,1) {};
		\node[vertex] (b4) at  (10,2) {};
		\node[vertex] (b5) at  (9,3) {};
		\node[vertex] (b6) at  (9,-2) {};
		\node[vertex] (b7) at (8,-2) {};
		\node[vertex] (b8) at (9,-3) {};
		\node[vertex] (b9) at (10,-2) {};
		\node[vertex] (b10) at (9,-1) {};
		\node[vertex] (b11) at (12,2) {};
		\node[vertex] (b12) at (11,2) {};
		\node[vertex] (b13) at (12,1) {};
		\node[vertex] (b14) at (13,2) {};
		\node[vertex] (b15) at (12,3) {};
		\node[vertex] (b16) at (12,-2) {};
		\node[vertex] (b17) at (11,-2) {};
		\node[vertex] (b18) at (12,-3) {};
		\node[vertex] (b19) at (13,-2) {};
		\node[vertex] (b20) at (12,-1) {};
		\draw[edge] (b1) -- (b2) -- (b1) -- (b3) -- (b1) -- (b4) -- (b1) -- (b5);
		\draw[edge] (b10) -- (b6) -- (b7) -- (b8) -- (b6) -- (b9);
		\draw[edge] (b15) -- (b11) -- (b12) -- (b13) -- (b11) -- (b14) -- (b15);
		\draw[edge] (b20) -- (b16) -- (b17) -- (b18) -- (b19) -- (b16) -- (b18);
		\end{tikzpicture}
		\caption{Five outerplanar graphs of order $5$ with maximum degree $4$. Uncolored graphs are subgraphs of the colored graph.}
		\label{out5}
	\end{center}
\end{figure}
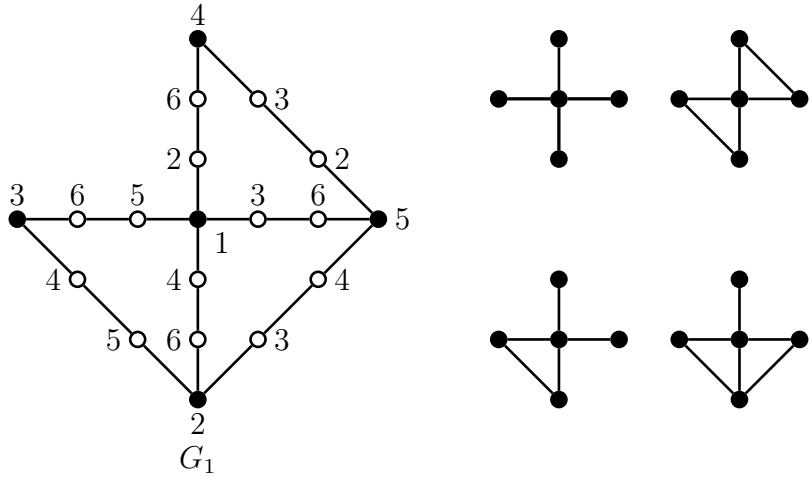
\begin{proof}{
We prove the theorem by induction on the order of $G$. Since $\Delta(G)\geq4$, $G$ has at least five vertices. There are five outerplanar graphs of order five with maximum degree at least 4 (Figure \ref{out5}). Since all of them are subgraphs of the first graph $G_1$, we need only to find a $vi$-simultaneous $(7,2)$-coloring for $G_1$. A $vi$-simultaneous $(6,1)$-coloring of $G_1$ is shown in Figure \ref{out5}. Now suppose that the theorem is true for all outerplanar graphs of order less than $n$ and let $G$ be an outerplanar graph of order $n$ with $\Delta\geq4$. There are three cases, corresponding to the cases in Lemma \ref{lemmaouterplanar}.\\
		(i) Let $d_G(v)=1$ and $u$ be the vertex adjacent to $v$. Then $G'=G-v$ is an outerplanar graph of smaller order and maximum degree at most $\Delta$. If  $\Delta(G)=4$ and $\Delta(G')=\Delta(G)-1=3$ then by use of Theorem \ref{delta+4} and otherwise, by the induction hypothesis, $G'$ has a $vi$-simultaneous $(\Delta+3,2)$-coloring named $c$. We extend $c$ to a $vi$-simultaneous $(\Delta+3,2)$-coloring of $G$. The degree of $u$ in $G'$ is at most $\Delta-1$, so there are at most $\Delta+2$ colors used by the vertices of $I[u]$. Hence, there is at least one color left to color the $i$-vertex $(u,v)$. Also, the $i$-vertex $(v,u)$ can be colored by one color of $c(I_2(u))$. Finally, there are at least $\Delta$ available colors for coloring $v$.\\
		(ii) $G$ has two adjacent 2-vertices $v$ and $u$. Let $x$ be the vertex adjacent to $v$ and $y$ be the vertex adjacent to $u$. Consider $G'=G-\{v,u\}$. Again, $G'$ is outerplanar with smaller order and maximum degree at most $\Delta$ and so by use of Theorem \ref{delta+4} or the induction hypothesis, $G$ has a $vi$-simultaneous $(\Delta+3,2)$-coloring named $c$. We extend $c$ to a $vi$-simultaneous $(\Delta+3,2)$-coloring of $G$. The degree of $x$ and $y$ in $G'$ is at most $\Delta-1$, so similar to the previous case, there is at least one color left to color each of the $i$-vertices $(x,v)$ and $(y,u)$. Also, each of the $i$-vertices $(v,x)$ and $(u,y)$ can be colored by one color of $c(I^{G'}_2(x))$ and $c(I^{G'}_2(y))$, respectively. After coloring these vertices, there are at least $\Delta$ available colors for the coloring $\{v, (v,u), (u,v), u\}$. Since $\Delta\geq4$, we can color properly the remain vertices.\\
		(iii) $G$ has a 2-vertex $v$ with adjacent neighbors $u$ and $w$. Consider $G'=G-v$. Again, $G'$ is outerplanar  with smaller order and maximum degree at most $\Delta$ and so has a $vi$-simultaneous $(\Delta+3,2)$-coloring named $c$. Suppose $(u,w)$ is colored by color $r$ and $(w,u)$ by color $s$. We now assign color $r$ to $(v,w)$ and color $s$ to $(v,u)$. This does not produce any conflict. The degree of $u$ and $w$ in $G'$ is at most $\Delta-1$, so similar to the previous cases, there is at least one color left to color each of the $i$-vertices $(u,v)$ and $(w,v)$. After coloring these vertices, there are at least $\Delta-3$ available colors for the coloring $v$. Since $\Delta\geq4$, we can complete the coloring.
}\end{proof}
\begin{corollary}\label{vi-outplanar}
	$\chi_{vi}(G)\leq \Delta(G)+3$ for any outerplanar graph $G$.
\end{corollary}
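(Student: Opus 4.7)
The plan is to derive the corollary as an immediate case analysis on $\Delta(G)$, combining the two main theorems of Section 3 with the easy small-degree cases. The key observation is that by Definition \ref{(k,l)IncidenceCol}, restricting the number of colors appearing on $I_2(v)$ only tightens the coloring, so we always have $\chi_{vi}(G)\leq \chi_{vi,2}(G)$. Hence any upper bound on $\chi_{vi,2}(G)$ transfers directly to $\chi_{vi}(G)$.

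First, for $\Delta(G)\geq 4$, Theorem~\ref{delta>3} gives $\chi_{vi,2}(G)\leq \Delta(G)+3$ for every outerplanar graph, so $\chi_{vi}(G)\leq \Delta(G)+3$ follows at once. Next, for $\Delta(G)=3$, Theorem~\ref{delta3} gives $\chi_{vi,2}(G)\leq 6=\Delta(G)+3$, again yielding the desired bound.

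It remains to handle the degenerate range $\Delta(G)\leq 2$. If $\Delta(G)=2$ then every component of $G$ is a path or a cycle, and Corollary~\ref{cycleandpath} shows $\chi_{vi}$ of such components is at most $5=\Delta(G)+3$ (taking the maximum over components does not increase this bound, since disjoint components can reuse colors). If $\Delta(G)\leq 1$, then $G$ is a matching together with isolated vertices; the matching case gives $\chi_{vi}(G)=4=\Delta(G)+3$ (trivially check a single edge, which needs exactly $4$ colors on its two vertices and two incidences), and the trivial empty-graph case gives $\chi_{vi}(G)=1$.

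I do not anticipate any real obstacle, since every ingredient has already been established above; the only care needed is to state explicitly the monotonicity $\chi_{vi}(G)\leq\chi_{vi,2}(G)$ and to not forget the small cases $\Delta(G)\leq 2$, which fall outside the scope of Theorems~\ref{delta3} and~\ref{delta>3}.
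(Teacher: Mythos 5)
Your proposal is correct and follows essentially the same route as the paper: the corollary is obtained from Theorem~\ref{delta3} (the case $\Delta=3$), Theorem~\ref{delta>3} (the case $\Delta\geq4$), and Corollary~\ref{cycleandpath} for $\Delta(G)\leq2$, combined with the monotonicity $\chi_{vi}(G)\leq\chi_{vi,2}(G)$ already noted in Section~\ref{sec1}. Your explicit treatment of the cases $\Delta(G)\leq 1$ is a minor addition that the paper leaves implicit, but it does not change the argument.
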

Due to the previous corollary, the Conjecture~\ref{cmahsa} is true for all outerplanar graphs. Also, Corollary \ref{vi-outplanar} implies that the $vi$-simultanious chromatic number of any outerplanar graph $G$ is equal to either $\Delta(G)+2$ or  $\Delta(G)+3$. Those outerplanar graphs $G$ for which $\chi_{vi}(G)=\Delta(G)+2$ are said to belong to $vi$-class one, and the others to $vi$-class two. The $vi$-class number of graph is defined as follows.
\begin{definition}
	Let $G$ be a nonempty graph with $vi$-simultanious chromatic number equal to $\Delta(G)+1+s$, where $s\in\mathbb{N}$. We say that $G$ is a graph of $vi$-class $s$.
\end{definition}
For example, Theorem \ref{complete} shows that all complete graphs are of $vi$-class 2 and Theorem \ref{tree}, shows that any forest with maximum degree at least 2 is a graph of $vi$-class 1.\\
In \cite{3power3subdivision}, the authors posed this question: Is there any graph $G$ with $\Delta(G)=3$ such that $\chi(G^{\frac{3}{3}})=6$? In fact, the question is about the existence of subcubic graphs of $vi$-class 2 or 3. In the next theorem, we show that the upper bound of Theorems \ref{delta3} and \ref{vi-outplanar} are tight and there are infinite subcubic graphs of $vi$-class two (with $vi$-simultaneous chromatic number 6).
\begin{theorem}\label{cubic6colors}
	Any graph that contains at least one of the following subgraphs (Figure \ref{subgraphs}) is not $vi$-simultaneous 5-colorable. Especially, for any outerplanar cubic graph $G$ that contains at least one of these subgraphs, $\chi_{vi}(G)=6$.
\end{theorem}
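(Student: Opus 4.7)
The plan is to argue by contradiction: assume $c$ is a $vi$-simultaneous proper $5$-coloring of a graph $G$ containing one of the configurations $H$ of Figure~\ref{subgraphs}. If $\Delta(G)\ge 4$ then already $\chi_{vi}(G)\ge \Delta(G)+2\ge 6$, so we may assume $\Delta(G)=3$ and that every vertex of $H$ is a $3$-vertex of $G$. Under this assumption Theorem~\ref{firstlem} applies with exactly $\Delta+2=5$ colors and yields $|c(I_2(v))|=1$ for every $3$-vertex $v$; write $\sigma(v)$ for this single forced color.

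From this observation we extract a handful of rigid local rules that drive the rest of the proof. For a $3$-vertex $v$ with neighbors $w_1,w_2,w_3$, the three incidences $(v,vw_i)$ together with $v$ form a clique in $G^{\frac{3}{3}}$, hence $c(v), c(v,vw_1), c(v,vw_2), c(v,vw_3)$ are four distinct colors. Whenever $w_i$ is itself a $3$-vertex, the incidence $(v,vw_i)$ lies in $I_2(w_i)$, so $c(v,vw_i)=\sigma(w_i)$. A short check using Definition~\ref{inci} shows, in addition, that $\sigma(v)\neq c(v)$ and $\sigma(v)\neq\sigma(w_j)$ for every $j$. Consequently, for any edge $uv$ between two $3$-vertices the four values $c(u),c(v),\sigma(u),\sigma(v)$ are pairwise distinct, and if a $3$-vertex $v$ has three $3$-vertex neighbors $w_1,w_2,w_3$, then
$$\{c(v),\sigma(v),\sigma(w_1),\sigma(w_2),\sigma(w_3)\}=[5].$$

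The heart of the proof is then a per-subgraph analysis. For each configuration in Figure~\ref{subgraphs} we fix $\sigma(v)$ and $c(v)$ at a chosen $3$-vertex (up to relabelling the five colors) and propagate the forced assignments along the short walks of $3$-vertices in $H$ using the two displayed rules. The plan is to show that every such propagation triggers a conflict: either two vertices adjacent in $G$ are forced to receive the same color, or the ``exhausts $[5]$'' constraint at a second $3$-vertex cannot be satisfied because the five prescribed colors would contain a repetition. The main obstacle is that this case analysis must be carried out individually for every subgraph depicted in the figure; the forcing rules are tight enough that only a few branches arise in each case, but the bookkeeping of $c$- and $\sigma$-values along the cycles and short paths of $3$-vertices in $H$ is where the real work lies.

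Finally, for the ``especially'' clause, an outerplanar cubic graph $G$ has $\Delta(G)=3$, so Corollary~\ref{vi-outplanar} gives $\chi_{vi}(G)\le\Delta(G)+3=6$. Combined with the lower bound $\chi_{vi}(G)\ge 6$ established above, we conclude $\chi_{vi}(G)=6$ for every outerplanar cubic graph that contains one of the forbidden configurations.
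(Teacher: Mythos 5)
Your reduction to $\Delta(G)=3$ is correct, but the clause ``we may assume \dots that every vertex of $H$ is a $3$-vertex of $G$'' is an unjustified (and in fact false) assumption, and it matters. The configurations of Figure~\ref{subgraphs} contain vertices of degree $1$ and $2$ (e.g.\ $c,d$ in $G_1$; the pendant vertices $d,e,f$ in $G_2$; $c,d,f$ in $G_3$; $c,f,h,k$ in $G_4$), and nothing forces these to have degree $3$ in the host graph: the theorem must in particular apply to $G=G_1$ (the diamond) and $G=G_2$ themselves, where those vertices genuinely have degree $2$ or $1$. Your key rules --- $c(v,vw_i)=\sigma(w_i)$, and $\{c(v),\sigma(v),\sigma(w_1),\sigma(w_2),\sigma(w_3)\}=[5]$ --- are only available when the relevant neighbors are $3$-vertices, so a propagation that relies on $\sigma$ being defined at \emph{every} vertex of $H$ simply does not cover these hosts. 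The paper is careful on exactly this point: Theorem~\ref{firstlem} is applied only at vertices that already have degree $3$ inside the configuration (these are automatically $3$-vertices of any host with $\Delta=3$, with the same set $I_2$), and the low-degree vertices are handled either by direct propagation ($G_3$, $G_4$) or, for $G_1$, by a counting argument over its $14$ elements (each color class contains at most $3$ of them, and the classes containing $s(a)$ or $s(b)$ at most $2$, so five classes cover at most $13<14$ elements).

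The second gap is that the heart of the proof is never executed. Your local rules are correct --- they are precisely what Theorem~\ref{firstlem} plus the clique $I_1[v]\cup\{v\}$ and Definition~\ref{inci} give, and they are the same tools the paper uses --- and they do suffice without your extra assumption; for instance, for $G_1$ with $c(a)=1$, $c(b)=2$, $\sigma(a)=3$, $\sigma(b)=4$, the constraints at vertex $c$ force $c(a,c)=2$, $c(b,c)=1$, $c(c)=5$, whence $c(a,d)=c(b,d)=5$ and vertex $d$ sees all five colors, a contradiction. But you do not run any such propagation for any of the four configurations; you only announce the plan and acknowledge that ``the bookkeeping \dots is where the real work lies.'' Since deriving these per-configuration contradictions \emph{is} the content of the theorem, what you have is correct infrastructure plus a sketch, not a proof. (The ``especially'' clause is the one part that is complete: for cubic $G$ every vertex of $H$ is indeed a $3$-vertex, and Corollary~\ref{vi-outplanar} gives $\chi_{vi}(G)\le 6$, matching the paper.)
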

\begin{figure}[h]
	\begin{center}
		\begin{tikzpicture}[scale=0.8]
		\tikzset{vertex/.style = {shape=circle,draw, fill=black, line width=1pt,opacity=1.0, inner sep=2pt}}
		\tikzset{edge/.style = {-,> = latex', line width=1pt,opacity=1.0}}
		\node[vertex] (a1) at (0,0) {};
		\node () at  (-0.4,0) {$a$};
		\node[vertex] (a2) at (2,0) {};
		\node () at  (2.4,0) {$b$};
		\node[vertex] (a3) at  (1,1.4) {};
		\node () at  (1,1.8) {$c$};
		\node[vertex] (a4) at  (1,-1.4) {};
		\node () at  (1,-1.8) {$d$};
		\node () at  (1,-2.8) {$G_1$};
		\node[vertex] (a5) at  (3,-1.4) {};
		\node () at  (2.6,-1.4) {$a$};
		\node[vertex] (a6) at  (5,-1.4) {};
		\node () at  (5.4,-1.4) {$b$};
		\node[vertex] (a7) at (4,0) {};
		\node () at  (3.6,0) {$c$};
		\node[vertex] (a8) at (4,1.4) {};
		\node () at  (4,1.8) {$d$};
		\node[vertex] (a9) at (3,-2.8) {};
		\node () at  (2.6,-2.8) {$e$};
		\node[vertex] (a10) at (5,-2.8) {};
		\node () at  (5.4,-2.8) {$f$};
		\node () at  (4,-2.8) {$G_2$};
		\node[vertex] (a11) at (6,0) {};
		\node () at  (5.6,0) {$a$};
		\node[vertex] (a12) at (8,0) {};
		\node () at  (8.4,0) {$b$};
		\node[vertex] (a13) at (7,1.4) {};
		\node () at  (7,1.8) {$c$};
		\node[vertex] (a14) at (6,-1.4) {};
		\node () at  (6,-1.8) {$d$};
		\node[vertex] (a15) at (8,-1.4) {};
		\node () at  (7.6,-1.8) {$e$};
		\node[vertex] (a16) at (8,-2.8) {};
		\node () at  (7.6,-2.8) {$f$};
		\node () at  (6.8,-2.8) {$G_3$};
		\node[vertex] (a17) at (9,0) {};
		\node () at  (9.4,-0.4) {$a$};
		\node[vertex] (a18) at (11,0) {};
		\node () at  (10.6,-0.4) {$b$};
		\node[vertex] (a19) at (10,1.4) {};
		\node () at  (10,1.8) {$c$};
		\node[vertex] (a20) at (9,-1.4) {};
		\node () at  (8.6,-1.4) {$d$};
		\node[vertex] (a21) at (10,-1.4) {};
		\node () at  (10,-1.8) {$f$};
		\node[vertex] (a22) at (11,-1.4) {};
		\node () at  (11.4,-1.4) {$e$};
		\node[vertex] (a23) at (9,-2.8) {};
		\node () at  (8.6,-2.8) {$h$};
		\node[vertex] (a24) at (11,-2.8) {};
		\node () at  (11.4,-2.8) {$k$};
		\node () at  (10,-2.8) {$G_4$};
		\draw[edge] (a1) -- (a2) -- (a3) -- (a1) -- (a4) -- (a2);
		\draw[edge] (a8) -- (a7) -- (a5) -- (a6) -- (a7);
		\draw[edge] (a5) -- (a9);
		\draw[edge] (a6) -- (a10);
		\draw[edge] (a15) -- (a14) -- (a11) -- (a12) -- (a15) -- (a16);
		\draw[edge] (a11) -- (a13) -- (a12);
		\draw[edge] (a24) -- (a22) -- (a21) -- (a20) -- (a23);
		\draw[edge] (a22) -- (a18) -- (a17) -- (a20);
		\draw[edge] (a17) -- (a19) -- (a18);
		\end{tikzpicture}
		\caption{Four subgraphs of Theorem \ref{cubic6colors}.}
		\label{subgraphs}
	\end{center}
\end{figure}
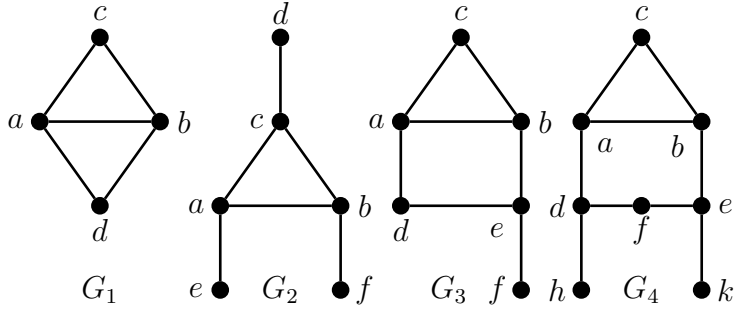
\begin{proof}{
		Let $s$ be a $vi$-simultanious $5$-coloring of $G_i$ ($1\leq i\leq4$). We have following cases.
		\begin{itemize}
			\item 
		($G_1$)  Theorem \ref{firstlem} implies that $s((b,a))=s((c,a))=s((d,a))$ and $s((a,b))=s((c,b))=s((d,b))$. Each color class of $s$ has at most 3 vertices. In addition, each color class that contains the color $s(a)$ or $s(b)$ has at most 2 vertices. Therefore, 5 colors are not enough and 	$\chi_{vi}(G)\geq 6$.\\
		\item
		($G_2$) Theorem \ref{firstlem} implies that $s((b,a))=s((c,a))=s((e,a))$, $s((a,b))=s((c,b))=s((f,b))$ and $s((a,c))=s((b,c))=s((d,c))$. In addition,
		\[|\{s(a),s(b),s(c)\}|=|\{s((a,b)),s((b,a)),s((a,c))\}|=3\]
		and $\{s(a),s(b),s(c)\}\cap\{s((a,b)),s((b,a)),s((a,c))\}=\varnothing$. So 	$\chi_{vi}(G)\geq 6$.\\
		\item
		($G_3$) Without loss of generality, suppose that $s(b)=1$, $s((b,c))=2$, $s((b,a))=3$ and $s((b,e))=4$. Lemma \ref{firstlem} implies that $s((c,b))=s((a,b))=s((e,b))$, $s((c,a))=s((b,a))=s((d,a))=3$ and $s((b,e))=s((d,e))=s((f,e))=4$. Suppose that $s((c,b))=s((a,b))=s((e,b))=5$. Since $c$ is a $vi$-simultanious $5$-coloring, $s(c)=4$. Consecutively we have $s(a)=2$ and then $s((a,c))=1$. Now we have no choice for the color of $(a,d)$, a contradiction. So 	$\chi_{vi}(G)\geq 6$.\\
		\item
		($G_4$) Without loss of generality, suppose that $s(b)=1$, $s((b,c))=2$, $s((b,a))=3$ and $s((b,e))=4$. Lemma \ref{firstlem} implies that $s((c,b))=s((a,b))=s((e,b))$, $s((c,a))=s((b,a))=s((d,a))=3$, $s((b,e))=s((f,e))=s((k,e))=4$ and $s((a,d))=s((f,d))=s((h,d))$. Suppose that $s((c,b))=s((a,b))=s((e,b))=5$.  Since $c$ is a $vi$-simultanious $5$-coloring, $s(c)=4$. Consecutively we have $s(a)=2$ and then $s((a,c))=1$. Since $s((f,d))\neq s((f,e))=4$, $s((a,d))\neq 4$. Now we have no choice for the color of $(a,d)$, a contradiction. So 	$\chi_{vi}(G)\geq 6$.
				\end{itemize}
}\end{proof}
\subsection{Proof of the Theorem~\ref{2}}
 To prove of the Theorem~\ref{2}, first we are going to prove the first case in the Theorem \ref{delta3girth4}. After that, in order to prove the two other cases, given the fact that   $\chi_{vi}(K_2)=4$ and $\chi_{vi}(G)\geq \Delta+2$ and by  Corollary~\ref{blocks}, we are just going to prove them for 2-connected outerplanar graphs with the desired properties in Theorems~\ref{delta4girth6} and \ref{delta5girth4}.
\begin{theorem}\label{delta3girth4}
	If $G$ is a 2-connected outerplanar graph with $\Delta(G)=3$ and $g(G)\geq4$, then $\chi_{vi,1}(G)\leq6$.
\end{theorem}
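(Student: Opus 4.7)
The plan is to proceed by induction on $|V(G)|$, with base cases (the smallest 2-connected outerplanar graphs with $\Delta = 3$ and girth at least $4$, the smallest being the 6-vertex graph consisting of $C_6$ together with a chord joining antipodal vertices) verified by exhibiting explicit $(6,1)$-colorings. In the inductive step, because $g(G) \geq 4$, case (iii) of Lemma~\ref{lemmaouterplanar} is ruled out (a 2-vertex with adjacent neighbors would force a triangle), and case (i) is ruled out by 2-connectivity, so case (ii) holds and $G$ contains two adjacent 2-vertices. More structurally, the weak dual of $G$ (a tree whose nodes are the inner faces) has a leaf, giving an end face $f = [v_1 v_2 \ldots v_k]$ with $k \geq 4$: the vertices $v_2, \ldots, v_{k-1}$ are all 2-vertices and the edge $v_1 v_k$ is a chord of $G$ separating $f$ from the rest of the graph.

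Next I would remove a pair of adjacent 2-vertices from this end face --- namely $v_2$ and $v_3$ when $k = 4$, or two consecutive interior 2-vertices when $k \geq 5$ --- to obtain a smaller outerplanar graph $G'$ with $\Delta(G') \leq 3$ and $g(G') \geq 4$. Although $G'$ need not be 2-connected, applying the inductive hypothesis to each 2-connected block of $G'$ and combining via Corollary~\ref{blocks} (using that $\deg_{G'}(v) + 2 \leq 5$ at every cut vertex and that $\chi_{vi,1}(K_2) = 4$) produces a $vi$-simultaneous $(6,1)$-coloring $c$ of $G'$. The extension of $c$ to $G$ is then tightly constrained by the $(6,1)$-condition: at the anchors $v_1, v_4$ it forces $c((v_2, v_1)) = p_1 := c(I_2^{G'}(v_1))$ and $c((v_3, v_4)) = p_4 := c(I_2^{G'}(v_4))$, while at the new 2-vertices $v_2, v_3$ it forces $c((v_1, v_2)) = c((v_3, v_2)) =: \alpha$ and $c((v_4, v_3)) = c((v_2, v_3)) =: \beta$. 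Only $\alpha, \beta, c(v_2), c(v_3)$ remain free; enumerating the forbidden colors for each (at most five forbidden values coming from the first incidences $c(I_1^{G'}(v_1))$ and $c(I_1^{G'}(v_4))$, the vertex colors $c(v_1), c(v_4)$, the forced values $p_1, p_4$, and the constraint $\alpha \ne \beta$ that results from the edge-sharing adjacency between $(v_1, v_2)$ and $(v_2, v_3)$), six colors typically admit a valid choice.

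The main obstacle will be the tight subcase $k = 4$ in which $v_1, v_4$ are both 3-vertices of $G$ (hence 2-vertices of $G'$), so that the colors already appearing around the chord $v_1 v_4$ in $c$ --- namely $c(v_1), c(v_4), p_1, p_4, c((v_1, v_4)), c((v_4, v_1))$ --- may exhaust nearly all of $[6]$, leaving only one or two candidates for $\alpha, \beta$ and possibly none satisfying the constraint $\alpha \ne \beta$. The remedy, which I expect to constitute the technical heart of the proof, is a local recoloring of $c$ on $G'$ before performing the extension: for instance, swapping $p_1$ or $c((v_1, v_4))$ with another color along a short alternating path at $v_1$, using that $v_1$ has only two neighbors in $G'$ and that the girth-4 condition prevents cascading conflicts from such a swap. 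Once this tight subcase is settled, the cases $k \geq 5$ follow by the same method under strictly looser constraints, since the interior 2-vertices removed have anchors of smaller degree and farther from the chord $v_1 v_k$.
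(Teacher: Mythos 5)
Your skeleton matches the paper's: induction on the order, the $C_6$-plus-antipodal-chord base case, and surgery on an end face $f=[v_1v_2\ldots v_k]$. For $k=4$ you delete the same two interior $2$-vertices the paper does; for $k\geq 5$ your variant (delete two consecutive interior $2$-vertices, accept the loss of $2$-connectivity, and reassemble with Corollary~\ref{blocks} and $\chi_{vi,1}(K_2)=4$) is a workable alternative to the paper's trick of deleting the single vertex $v_{i+1}$ and adding the chord $\{v_i,v_{i+2}\}$ so that the induction hypothesis applies directly. The genuine gap is in the extension step, which you do not actually complete: you assert that six colors ``typically'' suffice, declare the subcase $k=4$ with $\deg_G(v_1)=\deg_G(v_4)=3$ to be tight (possibly admitting no valid pair $\alpha\neq\beta$), and defer its resolution to an unspecified local recoloring of $c$ along ``a short alternating path,'' which you yourself call the technical heart of the proof. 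That recoloring is never defined, and Kempe-type swaps in $G^{\frac{3}{3}}$ are not obviously legitimate (a swapped incidence color propagates constraints to both endpoints of its edge and can destroy the $(6,1)$-property elsewhere), so as written the proof has a hole exactly where you said the main work lies.

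In fact the tight subcase does not exist, and seeing why is precisely the idea you are missing --- it is the one the paper uses. In the $(6,1)$-coloring $c$ of $G'$ the second incidences at each vertex are monochromatic; since $(v_1,v_4)\in I_2^{G'}(v_4)$ and $(v_4,v_1)\in I_2^{G'}(v_1)$, you automatically have $c((v_1,v_4))=p_4$ and $c((v_4,v_1))=p_1$. Hence your worry list $\{c(v_1),c(v_4),p_1,p_4,c((v_1,v_4)),c((v_4,v_1))\}$ contains only \emph{four} colors, never six. Concretely, in the $k=4$ case the fixed forbidden set for $\alpha$ is $\{c(v_1),p_1,p_4,c((v_1,v_0))\}$ (where $v_0$ is the third neighbor of $v_1$) and the fixed forbidden set for $\beta$ is $\{c(v_4),p_1,p_4,c((v_4,v_5))\}$, each of size at most four; so at least two candidates remain for each of $\alpha$ and $\beta$, they can be chosen distinct, and afterwards $v_2,v_3$ each avoid at most four fixed colors plus one another. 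This identification of first-incidence colors with the $p$-values is exactly the paper's sentence ``$c((v_{i-1},v_i))=c((v_{i+3},v_i))$ and $c((v_{i+4},v_{i+3}))=c((v_i,v_{i+3}))$,'' and with it your argument closes with no recoloring at all; without it (and without a worked-out recoloring lemma), your proposal does not prove the theorem. Note also that your own earlier count of ``at most five forbidden values'' silently uses this identity, so the proposal is internally inconsistent on the very point at issue.
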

\begin{proof}{
Suppose that $\Delta(G)=\Delta$ and $g(G)=g$. We prove the theorem by induction on the order of $G$. Since $G$ is 2-connected, $\Delta\geq3$ and $g\geq4$, $G$ has at least six vertices. In Figure~\ref{figdelta3girth4}, the only 2-connected outerplanar graph of order $6$ with $\Delta=3$ and $g=4$ with a $vi$-simultanious $(6,1)$-coloring is shown.
		\begin{figure}[h]
			\begin{center}
				\begin{tikzpicture}[scale=0.8]
				\tikzset{vertex1/.style = {shape=circle,draw, fill=black, line width=1pt,opacity=1.0, inner sep=2pt}}
				\tikzset{vertex/.style = {shape=circle,draw, fill=white, line width=1pt,opacity=1.0, inner sep=2pt}}
				\tikzset{edge/.style = {-,> = latex', line width=1pt,opacity=1.0}}
				\node[vertex1] (a1) at (0,0) {};
				\node () at  (0,0.4) {$1$};
				\node[vertex] (a2) at (1,0) {};
				\node () at  (1,0.4) {$2$};
				\node[vertex] (a3) at  (2,0) {};
				\node () at  (2,0.4) {$5$};
				\node[vertex1] (a4) at  (3,0) {};
				\node () at  (3,0.4) {$3$};
				\node[vertex] (a5) at  (3,-1) {};
				\node () at  (3.4,-1) {$4$};
				\node[vertex] (a6) at  (3,-2) {};
				\node () at  (3.4,-2) {$2$};
				\node[vertex1] (a7) at (3,-3) {};
				\node () at  (3.4,-3) {$1$};
				\node[vertex] (a8) at (2,-3) {};
				\node () at  (2,-3.4) {$3$};
				\node[vertex] (a9) at (1,-3) {};
				\node () at  (1,-3.4) {$4$};
				\node[vertex1] (a10) at (0,-3) {};
				\node () at  (0,-3.4) {$2$};
				\node[vertex] (a11) at (-1,-3) {};
				\node () at  (-1,-3.4) {$1$};
				\node[vertex] (a12) at (-2,-3) {};
				\node () at  (-2,-3.4) {$3$};
				\node[vertex1] (a13) at (-3,-3) {};
				\node () at  (-3,-3.4) {$5$};
				\node[vertex] (a14) at (-3,-2) {};
				\node () at  (-3.4,-2) {$4$};
				\node[vertex] (a15) at (-3,-1) {};
				\node () at  (-3.4,-1) {$1$};
				\node[vertex1] (a16) at (-3,0) {};
				\node () at  (-3.4,0) {$2$};
				\node[vertex] (a17) at (-2,0) {};
				\node () at  (-2,0.4) {$5$};
				\node[vertex] (a18) at (-1,0) {};
				\node () at  (-1,0.4) {$4$};
				\node[vertex] (a19) at (0,-1) {};
				\node () at  (-0.4,-1) {$3$};
				\node[vertex] (a20) at (0,-2) {};
				\node () at  (-0.4,-2) {$5$};
				\draw[edge] (a1) -- (a2) -- (a3) -- (a4) -- (a5) -- (a6) -- (a7) -- (a8) -- (a9) -- (a10) -- (a11) -- (a12) -- (a13) -- (a14) -- (a15) -- (a16) -- (a17) -- (a18) -- (a1) -- (a19) -- (a20) -- (a10);
				\end{tikzpicture}
				\caption{$2$-connected outerplanar graph $G$ of order $6$ with $\Delta(G)=3$ and  $g(G)=4$.}
				\label{figdelta3girth4}
			\end{center}
		\end{figure}
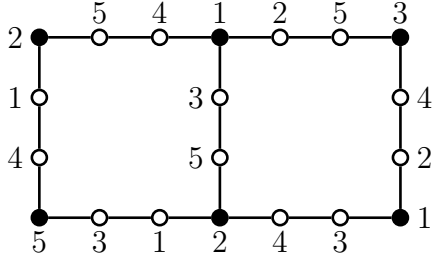
		Now suppose that $G$ is a $2$-connected outerplanar graph of order $n$ with $\Delta=3$ and $g=4$, and the statement is true for all $2$-connected outerplanar graphs of order less than $n$ with $\Delta=3$ and $g=4$. Since $G$ is $2$-connected, $G$ has an end face $f=[v_iv_{i+1} \ldots v_j]$ of degree at least $4$. Let $S$ be the set of 2-vertices of $f$ and $G'=G-S$.\\
		First, suppose that $f=[v_iv_{i+1}v_{i+2}v_{i+3}]$. If $\Delta(G')=2$, then  by Lemma~\ref{cycles}, $G'$ has a $vi$-simultaneous $(5,1)$-coloring $c$ and if $\Delta(G')=3$, then by  induction hypothesis $G'$ has a $vi$-simultaneous $(6,1)$-coloring such as $c$. Now, it suffices to extend $c$ to a proper $vi$-simultaneous $(6,1)$-coloring for $G$.\\
		Since $v_i$, $v_{i+3}$, $(v_i,v_{i+3})$ and $(v_{i+3},v_i)$ are pairwise adjacent in $(G')^{\frac{3}{3}}$, they must have different colors. Without loss of generality, suppose that $c(v_i)=1$, $c((v_i,v_{i+3}))=2$, $c((v_{i+3},v_i))=3$ and $c(v_{i+3})=4$. Note that, by coloring of $(G')^{\frac{3}{3}}$, we have $c((v_{i-1},v_i))=c((v_{i+3},v_i))$ and $c((v_{i+4},v_{i+3}))=c((v_{i},v_{i+3}))$. Now, color two $i$-vertices $(v_{i+1},v_i)$ and $(v_{i+2},v_{i+3})$ with colors $3$ and $2$, respectively. It can be easily seen that there are at least two available colors for each of $i$-vertices $(v_{i},v_{i+1})$ and $(v_{i+3},v_{i+2})$. First, assign different colors to these two $i$-vertices. Also, color $(v_{i+2},v_{i+1})$ and $(v_{i+1},v_{i+2})$ as same as $i$-vertices $(v_{i},v_{i+1})$ and $(v_{i+3},v_{i+2})$, respectively. 
		By coloring two $t$-vertices $v_{i+1}$ and  $v_{i+2}$ with different available colors, we have a proper $vi$-simultaneous $(6,1)$-coloring for $G$.\\
		Now, suppose that $f=[v_iv_{i+1}v_{i+2}\ldots v_{j}]$, $j\geq i+4$ and consider the graph $G'=(G-v_{i+1})+e$ where $e=\{v_{i},v_{i+2}\}$. Color two $i$-vertices $(v_i,v_{i+1})$ and $(v_{i+1},v_{i})$ with colors $c((v_i,v_{i+2}))$ and $c((v_{i+2},v_{i}))$, respectively. Also, assign color $c((v_i,v_{i+2}))$ to the $i$-vertex $(v_{i+2},v_{i+1})$. Now, we are going to recolor vertices $(v_{i+3},v_{i+2})$ and $v_{i+2}$ and color the $t$-vertex $v_{i+1}$ as well.  It can be easily seen that, there is one available color for the $i$-vertex $(v_{i+3},v_{i+2})$ and $3$ available colors for the vertices $v_{i+1}$ and $v_{i+2}$. By coloring these vertices with different colors, we have a proper $vi$-simultaneous $(6,1)$-coloring of $G$.
}\end{proof}
\begin{theorem}\label{delta4girth4}
	If $G$ is a $2$-connected outerplanar graph with $\Delta(G)\geq4$ and $g(G)\geq4$, then $\chi_{vi,1}(G)\leq\Delta+3$.
\end{theorem}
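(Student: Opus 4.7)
The plan is to proceed by induction on $|V(G)|$, following the inductive scheme of Theorem~\ref{delta3girth4}. The base case is the handful of smallest 2-connected outerplanar graphs satisfying $\Delta\geq 4$ and $g\geq 4$, each handled by an explicit $(\Delta+3,1)$-coloring. For the inductive step, I first locate an end face $f=[v_iv_{i+1}\cdots v_j]$ of $G$: since $G$ is 2-connected, Lemma~\ref{lemmaouterplanar} provides either two adjacent 2-vertices or a 2-vertex with adjacent neighbors, and the latter forces a triangle, contradicting $g\geq 4$. Hence $f$ exists with $j-i\geq 3$, and the edge $\{v_i,v_j\}$ lies in $G$ as a chord.

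In Case A ($j=i+3$), set $G'=G-\{v_{i+1},v_{i+2}\}$, which is outerplanar with $\Delta(G')\leq\Delta$ and $g(G')\geq 4$. A $(\Delta+3,1)$-coloring $c$ of $G'$ is obtained from the inductive hypothesis when $\Delta(G')\geq 4$ (applied block-by-block via Corollary~\ref{blocks} if $G'$ is not 2-connected), from Theorem~\ref{delta3girth4} when $\Delta(G')=3$, or from Theorem~\ref{cycles} when $\Delta(G')\leq 2$. To extend $c$ to $G$, the $(k,1)$-property forces
\[c((v_{i+1},v_i))=\alpha,\qquad c((v_{i+2},v_{i+3}))=\beta,\]
where $\alpha,\beta$ are the unique colors used on $I_2^{G'}(v_i)$ and $I_2^{G'}(v_{i+3})$, and couples $c((v_i,v_{i+1}))=c((v_{i+2},v_{i+1}))$ together with $c((v_{i+1},v_{i+2}))=c((v_{i+3},v_{i+2}))$ through $I_2(v_{i+1})$ and $I_2(v_{i+2})$. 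Counting the forbidden colors at each position against a palette of size $\Delta+3\geq 7$, the two coupled pairs and then the two vertices $v_{i+1},v_{i+2}$ can be colored consistently; in borderline subcases one first swaps the color of a previously fixed incidence (such as $(v_{i+3},v_{i+2})$ or an incidence of the chord $\{v_i,v_{i+3}\}$) to free the needed slot, analogously to the final subcase of Theorem~\ref{2connectedout}.

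In Case B ($j\geq i+4$), set $e=\{v_i,v_{i+2}\}$ and $G'=(G-v_{i+1})+e$. Then $G'$ is a 2-connected outerplanar graph of smaller order with $\Delta(G')=\Delta$ and $g(G')\geq 4$ (the shortest cycle through $e$ has length $j-i\geq 4$), so the inductive hypothesis yields a $(\Delta+3,1)$-coloring $c$ of $G'$. Transfer the color of $(v_{i+2},v_i)$ to $(v_{i+1},v_i)$ and the color of $(v_i,v_{i+2})$ to $(v_{i+1},v_{i+2})$, which preserves the $(k,1)$-property at $v_i$ and at $v_{i+2}$; it then remains to color the pair $\{(v_i,v_{i+1}),(v_{i+2},v_{i+1})\}$ with a single new color and finally $v_{i+1}$, both feasible since at most $\Delta+2$ colors are forbidden at each step.

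The main obstacle lies in Case A: the $(k,1)$-restriction pins two incidence colors to values inherited from $G'$ and couples the remaining four unknowns into two pairs, while only $\Delta+3$ colors are available. The girth hypothesis $g(G)\geq 4$ is essential here, guaranteeing that the end face has at least four boundary vertices so that the inserted 2-vertices $v_{i+1},v_{i+2}$ attach to $G'$ at distinct vertices ($v_i$ and $v_{i+3}$ respectively); this separation supplies the extra room needed to avoid palette exhaustion, room absent when $g=3$ permits a triangular end face.
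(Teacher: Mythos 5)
Your overall scheme is the paper's own: induction on the order, an end face of degree at least $4$, and the same two surgeries ($G-\{v_{i+1},v_{i+2}\}$ when the face has degree exactly $4$, and $(G-v_{i+1})+\{v_i,v_{i+2}\}$ when it is longer). Your Case A is sound and essentially identical to the paper's argument: the forced colors $\alpha,\beta$ on $(v_{i+1},v_i)$ and $(v_{i+2},v_{i+3})$, the two coupled pairs through $I_2(v_{i+1})$ and $I_2(v_{i+2})$, and the counting (at most $\Delta+2$ forbidden colors per pair, then at least $\Delta-1$ choices for each of $v_{i+1},v_{i+2}$) all go through; the "borderline swap" you hedge with is never needed.

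The gap is in Case B, at the step where you give the pair $I_2(v_{i+1})=\{(v_i,v_{i+1}),(v_{i+2},v_{i+1})\}$ a \emph{fresh} color. Write $A=c((v_{i+2},v_i))$ and $B=c((v_i,v_{i+2}))$. After your transfer, a color for this pair must avoid: $c(v_i)$; the $\deg_G(v_i)-1$ colors on $I_1^G(v_i)\setminus\{(v_i,v_{i+1})\}$; $A$ (now on $(v_{i+1},v_i)$ and on all of $I_2(v_i)$); $B$ (now on $(v_{i+1},v_{i+2})\in I_1(v_{i+1})$, adjacent to both members of the pair, and on $I_2(v_{i+2})$); and additionally $c(v_{i+2})$ and $c((v_{i+2},v_{i+3}))$, since $(v_{i+2},v_{i+1})$ is adjacent to both of these elements. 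The first four groups are exactly the colors of the clique $\{v_i\}\cup I_1^{G'}(v_i)\cup\{(v_{i+2},v_i)\}$ in $(G')^{\frac{3}{3}}$, hence pairwise distinct; when $\deg_G(v_i)=\Delta$ they already use $\Delta+2$ colors, and nothing prevents the inductive coloring of $G'$ from giving $v_{i+2}$ precisely the one remaining palette color (in $G'$ the vertex $v_{i+2}$ is not adjacent to $I_1^{G'}(v_i)\setminus\{(v_i,v_{i+2})\}$, so this is a legitimate coloring of $G'$). Then every one of the $\Delta+3$ colors is forbidden for the pair, so your claim that "at most $\Delta+2$ colors are forbidden at each step" is false and the extension can fail. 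The paper's Case 2 does the opposite transfer precisely to dodge this: it reuses the freed color $B$ on $I_2(v_{i+1})$ (automatically compatible on $v_i$'s side, since $B$ was a first-incidence color at $v_i$), accepts the resulting clash with $(v_{i+3},v_{i+2})$ (which carries $B$ by the $(k,1)$-property at $v_{i+2}$ in $G'$), and then recolors $(v_{i+3},v_{i+2})$, $(v_{i+1},v_{i+2})$ and $v_{i+2}$ with a new color --- feasible exactly because $v_{i+3}$ is a $2$-vertex of the face of degree at least $5$. Your Case B needs this recoloring step (or an equivalent repair); as written it does not go through.
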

\begin{proof}{
We prove the theorem by induction on the number of vertices of $G$. Since $G$ is 2-connected, $\Delta\geq4$ and $g\geq4$, $G$ has at least eight vertices.  In Figure~\ref{d4g4},  a $vi$-simultanious $(6,1)$-coloring of the only $2$-connected outerplanar graph of order $8$ with maximum degree $4$ and girth at least $4$ is shown.

 Let $G$ be a $2$-connected outerplanar graph of order $n$ and $\Delta(G)=\Delta$. Since $\Delta\geq 4$ and $g(G)\geq 4$, $G$ has an end face $f=[v_iv_{i+1}\ldots v_{j-1}v_j]$ of degree at least $4$. \\
		Suppose that the theorem is true for all $2$-connected outerplanar graphs with maximum degree at least $4$, girth at least $4$ and less than $n$ vertices.
		\begin{figure}[h]
			\begin{center}
				\begin{tikzpicture}[scale=0.7]
				\tikzset{vertex1/.style = {shape=circle,draw, fill=black, line width=1pt,opacity=1.0, inner sep=2pt}}
				\tikzset{vertex/.style = {shape=circle,draw, fill=white, line width=1pt,opacity=1.0, inner sep=2pt}}
				\tikzset{edge/.style = {-,> = latex', line width=1pt,opacity=1.0}}
				\node[vertex1] (a1) at (0,0) {};
				\node () at  (0.4,-0.4) {$1$};
				\node[vertex] (a2) at (1,0) {};
				\node () at  (1,0.4) {$3$};
				\node[vertex] (a3) at  (2,0) {};
				\node () at  (2,0.4) {$6$};
				\node[vertex1] (a4) at  (3,0) {};
				\node () at  (3,0.4) {$2$};
				\node[vertex] (a5) at  (3,-1) {};
				\node () at  (3.4,-1) {$1$};
				\node[vertex] (a6) at  (3,-2) {};
				\node () at  (3.4,-2) {$3$};
				\node[vertex1] (a7) at (3,-3) {};
				\node () at  (3.4,-3) {$6$};
				\node[vertex] (a8) at (2,-3) {};
				\node () at  (2,-3.4) {$4$};
				\node[vertex] (a9) at (1,-3) {};
				\node () at  (1,-3.4) {$1$};
				\node[vertex1] (a10) at (0,-3) {};
				\node () at  (0,-3.4) {$2$};
				\node[vertex] (a11) at (-1,-3) {};
				\node () at  (-1,-3.4) {$3$};
				\node[vertex] (a12) at (-2,-3) {};
				\node () at  (-2,-3.4) {$4$};
				\node[vertex1] (a13) at (-3,-3) {};
				\node () at  (-3,-3.4) {$1$};
				\node[vertex] (a14) at (-3,-2) {};
				\node () at  (-3.4,-2) {$5$};
				\node[vertex] (a15) at (-3,-1) {};
				\node () at  (-3.4,-1) {$3$};
				\node[vertex1] (a16) at (-3,0) {};
				\node () at  (-3.4,0) {$2$};
				\node[vertex] (a17) at (-2,0) {};
				\node () at  (-2,0.4) {$6$};
				\node[vertex] (a18) at (-1,0) {};
				\node () at  (-1,0.4) {$5$};
				\node[vertex] (a19) at (0,-1) {};
				\node () at  (-0.4,-1) {$4$};
				\node[vertex] (a20) at (0,-2) {};
				\node () at  (-0.4,-2) {$6$};
				\node[vertex] (a21) at (-3,1) {};
				\node () at  (-3.4,1) {$4$};
				\node[vertex] (a22) at (-3,2) {};
				\node () at  (-3.4,2) {$5$};
				\node[vertex1] (a23) at (-3,3) {};
				\node () at  (-3.4,3) {$3$};
				\node[vertex] (a24) at (-2,3) {};
				\node () at  (-2,3.4) {$2$};
				\node[vertex] (a25) at (-1,3) {};
				\node () at  (-1,3.4) {$4$};
				\node[vertex1] (a26) at (0,3) {};
				\node () at  (0,3.4) {$5$};
				\node[vertex] (a27) at (0,2) {};
				\node () at  (0.4,2) {$6$};
				\node[vertex] (a28) at (0,1) {};
				\node () at  (0.4,1) {$2$};
				\draw[edge] (a1) -- (a2) -- (a3) -- (a4) -- (a5) -- (a6) -- (a7) -- (a8) -- (a9) -- (a10) -- (a11) -- (a12) -- (a13) -- (a14) -- (a15) -- (a16) -- (a17) -- (a18) -- (a1) -- (a19) -- (a20) -- (a10);
				\draw[edge] (a16) -- (a21) -- (a22) -- (a23) -- (a24) -- (a25) -- (a26) -- (a27) -- (a28) -- (a1);
				\end{tikzpicture}
				\caption{A $2$-connected outerplanar graph  of order 8 with maximum degree 4 and girth 4.}
				\label{d4g4}
			\end{center}
		\end{figure}
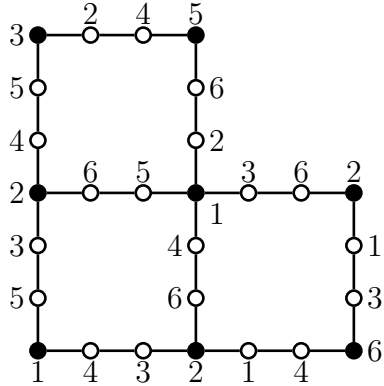
		Consider the end face $f=[v_iv_{i+1}\ldots v_{j-1}v_j]$ of degree at least $4$ and  let $H$ be the induced subgraph of $G$ on $t$-vertices of degree $2$ of  $f$. If $\Delta(G\setminus H)\leq 3$, then by Theorem~\ref{delta3girth4}, $(G\setminus H)^{\frac{3}{3}}$ has a $6$-proper coloring, named $c$ in which for any vertex $u\in V(G\setminus H)$, $|\{c(v): v\in I_2(u) \}|=1$. Also, if $\Delta(G\setminus H)\geq 4$, then by induction hypothesis, $c$ is a $(\Delta+3)$-proper coloring for $(G\setminus H)^{\frac{3}{3}}$ with desired property. It is enough to extend $c$ to a $(\Delta+3)$-proper coloring of $G^{\frac{3}{3}}$ with desired property of the theorem.\\
		First suppose that $f=[v_iv_{i+1}v_{i+2}v_{i+3}]$. Consider the graph $G\setminus H$ and its desired coloring $c$ with at most $(\Delta+3)$ colors. First, color two $i$-vertices $(v_{i+1},v_i)$ and $(v_{i+2},v_{i+3})$ as same as $i$-vertices $(v_{i},v_{i+3})$ and $(v_{i+3},v_{i})$, respectively. Now color two $i$-vertices $(v_{i},v_{i+1})$ and $(v_{i+3},v_{i+2})$ with different available colors. Also, color two $i$-vertices $(v_{i+2},v_{i+1})$ and $(v_{i+1},v_{i+2})$ as same as $i$-vertices $(v_{i},v_{i+1})$ and $(v_{i+3},v_{i+3})$, respectively.
		Now it can be easily seen that each of $t$-vertices $v_{i+1}$ and $v_{i+2}$ have at least $\Delta-1\geq 3$ available colors. By coloring those $t$-vertices we have a $(\Delta+3)$-coloring for $G^{\frac{3}{3}}$ with the desired property. \\
		Now, suppose that $f=[v_iv_{i+1}\ldots v_{j-1}v_j]$ is an end face of degree at least $5$ with $j\geq i+4$ and consider the graph $G'=(G-v_{i+1})+e$ where $e=\{v_{i}v_{i+2}\}$. Color two $i$-vertices $(v_i,v_{i+1})$ and $(v_{i+1},v_{i})$ with colors $c((v_i,v_{i+2}))$ and $c((v_{i+2},v_{i}))$, respectively. Also, assign color $c((v_i,v_{i+2}))$ to the $i$-vertex $(v_{i+2},v_{i+1})$. Now, we are going to recolor vertices $(v_{i+3},v_{i+2})$ and $v_{i+2}$ and color the $t$-vertex $v_{i+1}$ as well.  It can be easily seen that, there are two available colors for the $i$-vertex $(v_{i+3},v_{i+2})$ and $\Delta-1\geq 3$ available colors for each of the $t$-vertices $v_{i+1}$ and $v_{i+2}$. By coloring these vertices with different colors, we have a $(\Delta+3)$-coloring for $(G)^{\frac{3}{3}}$ with desired property of the theorem.
}\end{proof}
According to Theorems \ref{cycles}, \ref{delta3girth4}, \ref{delta4girth4}, Theorem \ref{tree} and Lemma \ref{thirdlem}, we conclude the following result. A cycle $C_3$ is often called a triangle and a triangle-free graph is a graph containing no triangle.
\begin{theorem}
	$\chi_{vi,1}(G)\leq\Delta(G)+3$ for any  triangle-free outerplanar graph.
\end{theorem}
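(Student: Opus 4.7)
The plan is to piece this together from prior results via the block decomposition of $G$. Since $G$ is triangle-free, every block $B$ of $G$ is either $K_2$ or a 2-connected triangle-free outerplanar graph, and in the latter case automatically satisfies $g(B)\geq 4$. Using the iterated form of Lemma \ref{thirdlem} (i.e.\ Corollary \ref{blocks}), we have
\[\chi_{vi,1}(G)=\max\{\chi_{vi,1}(B_1),\ldots,\chi_{vi,1}(B_k),\deg_G(v_1)+2,\ldots,\deg_G(v_s)+2\},\]
where $B_1,\ldots,B_k$ are the blocks and $v_1,\ldots,v_s$ are the cut vertices of $G$. Since each cut-vertex term is at most $\Delta(G)+2<\Delta(G)+3$, it suffices to bound $\chi_{vi,1}(B)\leq\Delta(G)+3$ for every block $B$.

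Next I would split on $\Delta(B)$. If $B=K_2$, Theorem \ref{tree} gives $\chi_{vi,1}(B)=4$, which is at most $\Delta(G)+3$ whenever $G$ has an edge. If $\Delta(B)=2$, then $B$ is a cycle of length at least $4$ (by triangle-freeness), and Theorem \ref{cycles} gives $\chi_{vi,1}(B)\leq 5\leq\Delta(G)+3$. If $\Delta(B)=3$, Theorem \ref{delta3girth4} applies and yields $\chi_{vi,1}(B)\leq 6=\Delta(B)+3\leq\Delta(G)+3$. Finally, if $\Delta(B)\geq 4$, Theorem \ref{delta4girth4} gives $\chi_{vi,1}(B)\leq\Delta(B)+3\leq\Delta(G)+3$. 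The degenerate case $\Delta(G)=0$ (no edges) is trivial.

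Because the argument is essentially bookkeeping over the four cited theorems, there is no serious obstacle. The only points to keep honest are that triangle-freeness of $G$ propagates to each non-trivial block as the girth hypothesis $g(B)\geq 4$ required to invoke Theorems \ref{delta3girth4} and \ref{delta4girth4}, and that the cut-vertex contribution $\deg_G(v)+2$ appearing in Corollary \ref{blocks} never exceeds $\Delta(G)+3$.
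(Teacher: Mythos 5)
Your proof is correct and matches the paper's approach exactly: the paper's (very terse) justification cites precisely Theorems \ref{cycles}, \ref{delta3girth4}, \ref{delta4girth4}, \ref{tree} and Lemma \ref{thirdlem}, i.e.\ the block decomposition with the same case split over $K_2$, cycles, $\Delta(B)=3$, and $\Delta(B)\geq4$ that you carry out. You have simply written out the bookkeeping (triangle-freeness passing to blocks as $g(B)\geq4$, and the cut-vertex terms being bounded by $\Delta(G)+2$) that the paper leaves implicit.
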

\begin{theorem}\label{delta4girth6}
	If $G$ is a $2$-connected outerplanar graph with $\Delta(G)=\Delta\geq4$ and  $g(G)=g\geq6$, then $\chi_{vi,1}(G)=\Delta(G)+2$.
\end{theorem}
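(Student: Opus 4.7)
{
The plan is to establish the upper bound $\chi_{vi,1}(G)\le\Delta+2$ by induction on $|V(G)|$, the lower bound $\chi_{vi,1}(G)\ge\Delta+2$ being automatic. Since $G$ is $2$-connected outerplanar with $\Delta\ge 4$, some vertex carries at least two chords, so $G$ has at least three internal faces and therefore an internal end face $f=[v_iv_{i+1}\ldots v_j]$ whose corners $v_i,v_j$ have degree at least $3$ in $G$. The girth hypothesis forces $j-i\ge g-1\ge 5$, so $f$ contains at least four consecutive interior $2$-vertices. I would then reduce to $G':=G\setminus\{v_{i+1},\ldots,v_{j-1}\}$: since $v_{i+1}\ldots v_{j-1}$ is the interior of an ear of $G$, $G'$ is $2$-connected outerplanar with $g(G')\ge g\ge 6$ and $\Delta(G')\le\Delta$. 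When $\Delta(G')\ge 4$ the induction hypothesis supplies a $(\Delta+2,1)$-coloring $c$ of $G'$; when $\Delta(G')=3$, Theorem~\ref{delta3girth4} supplies one with at most $6\le\Delta+2$ colors.

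The main step is to extend $c$ across the removed path. In any $(k,1)$-coloring each vertex $v$ has a unique common color $A_v$ on $I_2(v)$; I would write $A_\ell:=A_{v_\ell}$, so that $A_i$ and $A_j$ are prescribed by $c$, while the $(k,1)$ property at the interior $2$-vertices forces $c_G((v_\ell,v_{\ell+1}))=A_{\ell+1}$ and $c_G((v_{\ell+1},v_\ell))=A_\ell$ for $\ell=i,\ldots,j-1$. A short check of the adjacencies in $G^{\frac{3}{3}}$ shows that the resulting assignment is proper along the entire path exactly when every three consecutive entries of $A_i,A_{i+1},\ldots,A_j$ are pairwise distinct, and when each of $A_{i+1},A_{j-1}$ satisfies its corner condition: $A_{i+1}$ must avoid $c(v_i)$, $A_i$, and the $\deg_{G'}(v_i)$ first-incidence colors $\{c((v_i,u)):u\in N_{G'}(v_i)\}$, with the symmetric requirement at $v_j$.

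This leaves $\Delta-\deg_{G'}(v_i)\ge 1$ admissible colors for $A_{i+1}$, the choice being forced precisely when $\deg_G(v_i)=\Delta$. The hardest part will be verifying that even in this corner-forcing case the extension succeeds; the key observation is that the forced value is automatically different from $A_j$, because $A_j=c((v_i,v_j))$ is itself one of the first-incidence colors of $v_i$ already excluded from the list for $A_{i+1}$, and symmetrically at $v_j$. With $A_{i+1}$ and $A_{j-1}$ pinned down, I would greedily assign the remaining $A_\ell$'s under the three-consecutive-distinct rule -- the hypothesis $j-i\ge 5$ ensures that the two forced corner values lie in disjoint three-windows of the sequence, so even their potential coincidence causes no conflict, and each step rules out at most four colors from a palette of $\Delta+2\ge 6$. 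Finally I would color each interior $2$-vertex $v_\ell$ by any color outside $\{A_{\ell-1},A_\ell,A_{\ell+1},c(v_{\ell-1}),c(v_{\ell+1})\}$, a set of at most five entries, leaving at least one choice in $[\Delta+2]$. Together with the lower bound, this yields $\chi_{vi,1}(G)=\Delta+2$.
}
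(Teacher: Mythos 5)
Your proposal is correct, and it shares the paper's global strategy: induction on the order, reduction at an end face (which by $g\geq 6$ has at least four interior $2$-vertices), with $\Delta(G')=3$ handled by Theorem~\ref{delta3girth4} and $\Delta(G')\geq 4$ by the induction hypothesis. The execution, however, is genuinely different. The paper splits into two cases by the degree of the end face: for degree exactly $6$ it deletes all interior $2$-vertices, exactly as you do, but for degree at least $7$ it instead deletes the single vertex $v_{i+1}$ and adds the chord $\{v_i,v_{i+2}\}$, shrinking the face by one and repairing the coloring locally; you treat every face degree uniformly by deleting the whole ear interior. More substantively, where the paper extends the coloring by explicit ad hoc assignments, you organize the extension around the observation that a $(k,1)$-coloring is the same thing as a pair $(c,A)$ in which each incidence $(u,v)$ carries the common color $A_v$ of $I_2(v)$; properness then amounts to $A$ being proper on $G^2$ together with $c(u)\neq A_u$ and $c(u)\neq A_w$ for $w\in N_G(u)$, so the extension becomes completing the sequence $A_i,\dots,A_j$ under the three-consecutive-distinct rule with at most two forced boundary terms. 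This buys uniformity (no case split, and no explicit order-$14$ base case, since your induction bottoms out in Theorem~\ref{delta3girth4}) and isolates where each hypothesis enters: $\Delta\geq 4$ supplies the palette of at least $6$ colors for the greedy completion and the at-least-one admissible corner value, while $g\geq 6$ forces $j-i\geq 5$, which is exactly what puts the two forced corner values $A_{i+1}$, $A_{j-1}$ in disjoint windows. Your key checks do hold: the forced $A_{i+1}$ avoids $A_j$ automatically because $v_j\in N_{G'}(v_i)$, and the case $\Delta(G')=2$ cannot occur (your ``three inner faces'' remark leaves $G'$ with a chord, so $\Delta(G')\geq 3$), making your two cases exhaustive. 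The only cost of your route is that the sentence ``a short check of the adjacencies in $G^{\frac{3}{3}}$'' carries the content that the paper spells out by hand; that check is routine and sound, including the fact that deleting the ear creates no new adjacencies among the retained elements because the chord $v_iv_j$ stays in $G'$.
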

\begin{proof}{
We prove the theorem by induction on the number of vertices of $G$. Since $G$ is 2-connected, $\Delta\geq4$ and $g\geq6$, $G$ has at least $14$ vertices.  In Figure~\ref{fignew},  a $vi$-simultanious $(6,1)$-coloring of the only $2$-connected outerplanar graph of order $14$ with maximum degree $4$ and girth at least $6$ is shown.\\
Let $|V(G)|=n$ and suppose that the theorem is true for all $2$-connected outerplanar graphs with maximum degree at least $4$, girth at least $6$ and less than $n$ vertices. Since $\chi_{vi}(G)\geq\omega(G^{\frac{3}{3}})=\Delta+2$, it suffices to present a $(\Delta+2)$-proper coloring for $G^{\frac{3}{3}}$. Since $\Delta\geq 4$ and $g(G)\geq 6$, $G$ has an end face $f=[v_iv_{i+1}\ldots v_{j-1}v_j]$ of degree at least $6$.\\
		\begin{figure}[h]
			\begin{center}
				\resizebox{8cm}{6cm}{%
					\begin{tikzpicture}[scale=0.7]
					\tikzset{vertex1/.style = {shape=circle,draw, fill=black, line width=1pt,opacity=1.0, inner sep=2pt}}
					\tikzset{vertex/.style = {shape=circle,draw, fill=white, line width=1pt,opacity=1.0, inner sep=2pt}}
					\tikzset{edge/.style = {-,> = latex', line width=1pt,opacity=1.0}}
					\node[vertex1] (1) at (1,1) {};
					\node () at  (1.4,1.4) {$3$};
					\node[vertex] (2) at (2,1) {};
					\node () at  (2,1.4) {$4$};
					\node[vertex] (3) at  (3,1) {};
					\node () at  (3,1.4) {$1$};
					\node[vertex1] (4) at  (4,1) {};
					\node () at  (4,1.4) {$5$};
					\node[vertex] (5) at  (5,1) {};
					\node () at  (5,1.4) {$3$};
					\node[vertex] (6) at  (6,1) {};
					\node () at  (6,1.4) {$4$};
					\node[vertex1] (7) at (7,1) {};
					\node () at  (7.4,1.4) {$2$};
					\node[vertex] (8) at (8,1) {};
					\node () at  (8,1.4) {$1$};
					\node[vertex] (9) at (9,1) {};
					\node () at  (9,1.4) {$3$};
					\node[vertex1] (10) at (10,1) {};
					\node () at  (10,1.4) {$6$};
					\node[vertex] (11) at (11,1) {};
					\node () at  (11,1.4) {$5$};
					\node[vertex] (12) at (12,1) {};
					\node () at  (12,1.4) {$1$};
					\node[vertex1] (13) at (13,1) {};
					\node () at  (12.6,1.4) {$2$};
					\node[vertex] (14) at (13,2) {};
					\node () at  (12.6,2) {$3$};
					\node[vertex] (15) at (13,3) {};
					\node () at  (12.6,3) {$5$};
					\node[vertex1] (16) at (13,4) {};
					\node () at  (13,4.4) {$4$};
					\node[vertex] (17) at (12,4) {};
					\node () at  (12,4.4) {$2$};
					\node[vertex] (18) at (11,4) {};
					\node () at  (11,4.4) {$3$};
					\node[vertex1] (19) at (10,4) {};
					\node () at  (10,4.4) {$5$};
					\node[vertex] (20) at (9,4) {};
					\node () at  (9,4.4) {$6$};
					\node[vertex] (21) at (8,4) {};
					\node () at  (8,4.4) {$2$};
					\node[vertex1] (22) at (7,4) {};
					\node () at  (7.4,3.6) {$1$};
					\node[vertex] (23) at (7,5) {};
					\node () at  (7.4,5) {$5$};
					\node[vertex] (24) at (7,6) {};
					\node () at  (7.4,6) {$6$};
					\node[vertex1] (25) at (7,7) {};
					\node () at  (7.4,7) {$4$};
					\node[vertex] (26) at (7,8) {};
					\node () at  (7.4,8) {$2$};
					\node[vertex] (27) at (7,9) {};
					\node () at  (7.4,9) {$5$};
					\node[vertex1] (28) at (7,10) {};
					\node () at  (7.4,10) {$6$};
					\node[vertex] (29) at (6,10) {};
					\node () at  (6,9.6) {$1$};
					\node[vertex] (30) at (5,10) {};
					\node () at  (5,9.6) {$2$};
					\node[vertex1] (31) at (4,10) {};
					\node () at  (3.6,10) {$3$};
					\node[vertex] (32) at (4,9) {};
					\node () at  (3.6,9) {$5$};
					\node[vertex] (33) at (4,8) {};
					\node () at  (3.6,8) {$1$};
					\node[vertex1] (34) at (4,7) {};
					\node () at  (3.6,7) {$2$};
					\node[vertex] (35) at (4,6) {};
					\node () at  (3.6,6) {$4$};
					\node[vertex] (36) at (4,5) {};
					\node () at  (3.6,5) {$5$};
					\node[vertex1] (37) at (4,4) {};
					\node () at  (4,3.6) {$3$};
					\node[vertex] (38) at (3,4) {};
					\node () at  (3,4.4) {$2$};
					\node[vertex] (39) at (2,4) {};
					\node () at  (2,4.4) {$4$};
					\node[vertex1] (40) at (1,4) {};
					\node () at  (1,4.4) {$5$};
					\node[vertex] (41) at (1,3) {};
					\node () at  (1.4,3) {$1$};
					\node[vertex] (42) at (1,2) {};
					\node () at  (1.4,2) {$2$};
					\node[vertex] (43) at (5,4) {};
					\node () at  (5,4.4) {$6$};
					\node[vertex] (44) at (6,4) {};
					\node () at  (6,4.4) {$4$};
					\node[vertex] (45) at (7,3) {};
					\node () at  (6.6,3) {$3$};
					\node[vertex] (46) at (7,2) {};
					\node () at  (6.6,2) {$6$};
					\draw[edge] (1) -- (2) -- (3) -- (4) -- (5) -- (6) -- (7) -- (8) -- (9) -- (10) -- (11) -- (12) -- (13) -- (14) -- (15) -- (16) -- (17) -- (18) -- (19) -- (20) -- (21) -- (22) -- (23) -- (24) -- (25) -- (26) -- (27) -- (28) -- (29) -- (30) -- (31) -- (32) -- (33) -- (34) -- (35) -- (36) -- (37) -- (38) -- (39) -- (40) -- (41) -- (42) -- (1);
					\draw[edge] (37) -- (43) -- (44) -- (22) -- (45) -- (46) -- (7);
					\end{tikzpicture}}
				\caption{A $2$-connected outerplanar graph  of order 14 with maximum degree 4 and girth 6.}
				\label{fignew}
			\end{center}
		\end{figure}
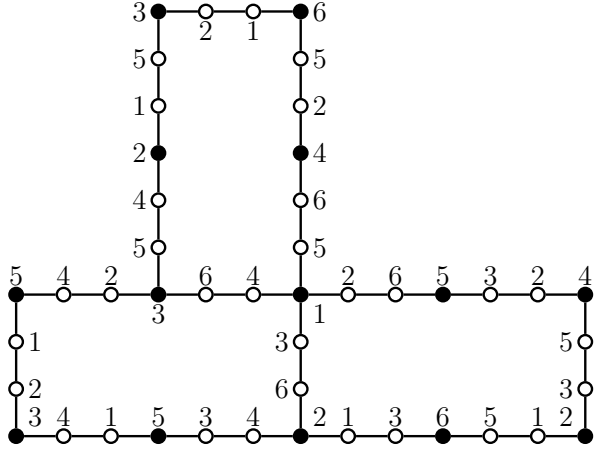
Consider the end face $f=[v_iv_{i+1}\ldots v_{j-1}v_j]$ of degree at least $6$ and  let $H$ be the induced subgraph of $G$ on $t$-vertices of degree $2$ of  $f$. If $\Delta(G\setminus H)\leq 3$, then by Theorem~\ref{delta3girth4}, $(G\setminus H)^{\frac{3}{3}}$ has a $6$-proper coloring, named $c$ in which for any vertex $u\in V(G\setminus H)$, $|c(I_2(u))|=1$. Also, if $\Delta(G\setminus H)\geq 4$, then by induction hypothesis, $c$ is a $(\Delta+2)$-proper coloring for $(G\setminus H)^{\frac{3}{3}}$ with desired property. It is enough to extend $c$ to a $(\Delta+2)$-proper coloring of $G^{\frac{3}{3}}$ with desired property of the theorem.\\
First suppose that $f=[v_iv_{i+1}v_{i+2}v_{i+3}v_{i+4}v_{i+5}]$. Consider the graph $G\setminus H$ with its $(\Delta+2)$-desired coloring $c$. First, color two $i$-vertices $(v_{i+1},v_i)$ and $(v_{i+4},v_{i+5})$ as same as $i$-vertices $(v_{i},v_{i+5})$ and $(v_{i+5},v_{i})$, respectively. Now color two $i$-vertices $(v_{i},v_{i+1})$ and $(v_{i+5},v_{i+4})$ with different available colors. Also, color two $i$-vertices $(v_{i+2},v_{i+1})$ and $(v_{i+3},v_{i+4})$ as same as $i$-vertices $(v_{i},v_{i+1})$ and $(v_{i+5},v_{i+4})$, respectively.
Now, each of $i$-vertices $(v_{i+1},v_{i+2})$ and $(v_{i+4},v_{i+3})$ has at least $\Delta-1\geq 3$ available colors. Color them with different colors. Also, color $(v_{i+2},v_{i+3})$ and $(v_{i+3},v_{i+2})$ like $(v_{i+1},v_{i+2})$ and $(v_{i+3},v_{i+4})$.
Now it can be easily seen that each of $t$-vertices $v_{i+1}$ and $v_{i+4}$ has at least $\Delta-2\geq 2$ available colors and $t$-vertices $v_{i+2}$ and $v_{i+3}$ have at least $\Delta-1\geq 3$ available colors. By coloring those $t$-vertices, we have a $(\Delta+2)$-coloring for $G^{\frac{3}{3}}$ with the desired property.\\	
Now, suppose that $f=[v_iv_{i+1}\ldots v_{j-1}v_j]$ is an end face of degree at least $7$ with $j\geq i+6$ and consider the graph $G'=(G-v_{i+1})+e$ where $e=\{v_{i}v_{i+2}\}$. Color two $i$-vertices $(v_i,v_{i+1})$ and $(v_{i+1},v_{i})$ with colors $c((v_i,v_{i+2}))$ and $c((v_{i+2},v_{i}))$, respectively. Also, assign color $c((v_i,v_{i+2}))$ to the $i$-vertex $(v_{i+2},v_{i+1})$. Now, we are going to recolor vertices $(v_{i+3},v_{i+2})$ and $v_{i+2}$ and color the $t$-vertex $v_{i+1}$ as well.  It can be easily seen that, there is one available color for the $i$-vertex $(v_{i+3},v_{i+2})$ and $\Delta-1\geq 3$ available colors for $t$-vertices $v_{i+1}$ and $v_{i+2}$. By coloring these vertices with different colors, we have a $(\Delta+2)$-coloring for $G^{\frac{3}{3}}$ which completes the proof.
}\end{proof}
\begin{theorem} \em{\cite{injective}}\label{injective}
	If $G$ is a $2$-connected outerplanar graph, then $G$ has an end face $f =[v_iv{i+1}\ldots v_j ]$, where either $deg(v_i) < 5$ or $deg(v_j) < 5$.
\end{theorem}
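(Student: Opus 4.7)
My plan is to proceed by induction on the number of vertices of $G$. The base cases are handled directly: if $G$ is a cycle or has a single chord, then $\Delta(G) \le 3$, so any end face trivially has both endpoints of degree less than $5$. Thus I may assume $G$ has at least two chords, hence at least two leaves in its weak dual tree, which means $G$ has at least two end faces.

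Pick any end face $f = [v_iv_{i+1}\ldots v_j]$ of $G$. If $\deg_G(v_i) < 5$ or $\deg_G(v_j) < 5$ we are done, so assume both endpoints have degree at least $5$. I would then delete the internal $2$-vertices $v_{i+1},\ldots,v_{j-1}$ of $f$ to obtain $G' = G - \{v_{i+1},\ldots,v_{j-1}\}$, a smaller $2$-connected outerplanar graph in which the former chord $\{v_i,v_j\}$ is now an outer edge. By induction, $G'$ contains an end face $f'$ with some endpoint $w$ satisfying $\deg_{G'}(w) < 5$. If $w \notin \{v_i,v_j\}$, then $\deg_G(w) = \deg_{G'}(w) < 5$ and a direct check shows $f'$ is also an end face of $G$ (its internal boundary vertices remain $2$-vertices in $G$, and its chord in $G'$ is still a chord of $G$), so $G$ has the desired end face.

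The main obstacle is the subcase $w \in \{v_i, v_j\}$, say $w = v_i$. Then $\deg_G(v_i) = \deg_{G'}(v_i) + 1 \le 5$ forces $\deg_G(v_i) = 5$. In this situation $f'$ need not be an end face of $G$, because its $G'$-boundary may include the edge $\{v_i,v_j\}$, which is an interior chord in $G$ rather than an outer edge. To resolve this I would examine the cyclic fan of chords at $v_i$ in $G$: it consists of exactly three chords $\{v_i,v_j\}, \{v_i,x_1\}, \{v_i,x_2\}$ (in cyclic order) lying between the two outer edges at $v_i$. The face $g$ of $G$ between $\{v_i,x_2\}$ and the outer edge of $v_i$ on the non-$f$ side is either itself an end face of $G$, in which case its chord is $\{v_i,x_2\}$ and a standard non-crossing argument shows that $x_2$ has no room for additional chords on the $g$-side, forcing $\deg_G(x_2) \le 3$; or $g$ has additional chords, in which case one recurses on the sub-configuration bounded by $\{v_i,x_2\}$ and the non-$f$ side of $v_i$. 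Since this sub-configuration has strictly fewer vertices than $G$, the inductive hypothesis applied there yields an end face of $G$ with an endpoint of degree less than $5$, closing the induction.
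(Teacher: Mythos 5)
First, a point of comparison: the paper does not prove this statement at all --- Theorem~\ref{injective} is imported as a black box from the cited reference \cite{injective} --- so there is no internal proof to measure you against; I can only assess your argument on its own terms. Your setup is fine: the base cases, the observation that deleting the internal $2$-vertices of $f$ leaves the other inner faces (and their endpoint structure) intact, and the disposal of the subcase $w\notin\{v_i,v_j\}$ are all correct. The proof breaks, however, exactly where you flag the ``main obstacle,'' in the subcase $w\in\{v_i,v_j\}$.

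The false step is the claim that if the face $g$ between the chord $\{v_i,x_2\}$ and the outer edge at $v_i$ on the non-$f$ side is an end face, then $\deg_G(x_2)\le 3$. Non-crossing of chords only forbids chords of $x_2$ entering the region bounded by $\{v_i,x_2\}$ and the outer path through $v_{i-1}$; it places no restriction on chords of $x_2$ on the \emph{other} side of $\{v_i,x_2\}$, i.e.\ into the arc between $x_1$ and $x_2$, and such chords are fully compatible with $g$, $f$ and the whole fan at $v_i$. Concretely, take the outer cycle $a,p_1,p_2,b,q_1,q_2,q_3,q_4,x_1,r_1,r_2,r_3,r_4,x_2,s$ (in this cyclic order) with chords $\{a,b\}$, $\{a,x_1\}$, $\{a,x_2\}$, $\{b,q_2\}$, $\{b,q_3\}$, $\{x_2,r_1\}$, $\{x_2,r_2\}$, $\{x_2,r_3\}$. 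Then $f=[ap_1p_2b]$ is an end face with $\deg(a)=\deg(b)=5$, so your induction step applies with $v_i=a$, $v_j=b$. In $G'=G-\{p_1,p_2\}$ the triangle $[ax_2s]$ is an end face whose only endpoint of degree less than $5$ is $a$ (degree $4$ in $G'$, while $\deg_{G'}(x_2)=6$); since your inductive hypothesis is purely existential, this is a legitimate output of the induction, and you are forced into the subcase $w=v_i=a$. There the fan at $a$ is $\{a,b\},\{a,x_1\},\{a,x_2\}$, the face $g=[ax_2s]$ \emph{is} an end face of $G$, and yet $\deg_G(x_2)=6$: your conclusion $\deg_G(x_2)\le3$ is simply false, and $g$ is not a witness because both of its endpoints have degree at least $5$. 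Your alternative branch (recursing inside the region bounded by $\{a,x_2\}$) is vacuous here, since that region is precisely the triangle $g$. The witnesses guaranteed by the theorem, namely $[bq_1q_2]$ (endpoint $q_2$, degree $3$) and $[x_2r_3r_4]$ (endpoint $r_3$, degree $3$), live in portions of the graph your case analysis never inspects: hanging off $b$, and off the far side of $x_2$. The same defect would recur inside your recursion even when it is nonvacuous: in the sub-configuration bounded by $\{v_i,x_2\}$ the vertex $v_i$ becomes a $2$-vertex, so its end faces need not be end faces of $G$, and its low-degree endpoint may again be $v_i$ or $x_2$. What your scheme actually requires is a strictly stronger inductive statement --- that a witness end face exists whose low-degree endpoint avoids two prescribed adjacent outer vertices --- and the fan argument does not deliver it; as written, the induction does not close.
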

\begin{theorem}\label{delta5girth4}
	If $G$ is a $2$-connected outerplanar graph of order $n$ with $\Delta(G)=\Delta\geq5$ and $g(G)=g\geq4$, then $\chi_{vi,1}(G)=\Delta(G)+2$.
\end{theorem}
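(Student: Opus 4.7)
The plan is induction on $|V(G)|$. The base case handles the smallest $2$-connected outerplanar graph with $\Delta(G)\geq 5$ and $g(G)\geq 4$; a direct $(\Delta+2,1)$-coloring can be exhibited in the spirit of Figures \ref{d4g4} and \ref{fignew}.

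For the inductive step, I would invoke Theorem \ref{injective} to pick an end face $f=[v_iv_{i+1}\cdots v_j]$ with (WLOG) $\deg_G(v_i)\leq 4$; since $g(G)\geq 4$, we have $|f|\geq 4$. If $|f|\geq 6$, I mimic the reduction of Theorem \ref{delta4girth6}: set $G_1=(G-v_{i+1})+\{v_i,v_{i+2}\}$, a smaller $2$-connected outerplanar graph with $\Delta(G_1)\leq\Delta$ and $g(G_1)\geq 4$, and apply the induction hypothesis (or Theorem \ref{delta4girth4} when $\Delta(G_1)\leq 4$, using $\Delta+2\geq 7\geq\Delta(G_1)+3$) to obtain a $(\Delta+2,1)$-coloring $c$ of $G_1$. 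I would then extend $c$ by copying $c(v_i,v_{i+2})$ to both $(v_i,v_{i+1})$ and $(v_{i+2},v_{i+1})$ and $c(v_{i+2},v_i)$ to $(v_{i+1},v_i)$, possibly recoloring $(v_{i+3},v_{i+2})$ and $v_{i+2}$, and finally selecting a color for $v_{i+1}$ from its remaining palette; this preserves the $(1)$-constraint at every vertex.

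If $|f|\in\{4,5\}$, I let $G'=G-\{v_{i+1},\ldots,v_{j-1}\}$; the boundary of $f$ leaves the edge $\{v_i,v_j\}$ in $G'$, and $G'$ is again a smaller $2$-connected outerplanar graph with $\Delta(G')\leq\Delta$ and $g(G')\geq 4$. An analogous appeal to the induction hypothesis (or to Theorems \ref{delta3girth4}, \ref{delta4girth4} when $\Delta(G')\leq 4$) yields a $(\Delta+2,1)$-coloring $c$ of $G'$. Writing $\alpha$ and $\beta$ for the unique colors of $I_2^{G'}(v_i)$ and $I_2^{G'}(v_j)$, I set $c(v_{i+1},v_i)=\alpha$ and $c(v_{j-1},v_j)=\beta$ to preserve $|c(I_2(\cdot))|=1$ at $v_i$ and $v_j$, propagate the $(1)$-constraint along the internal $2$-vertex path by forcing $c(v_k,v_{k-1})=c(v_k,v_{k+1})$ at each internal $v_k$, and then color the $2$-vertices themselves from the residual palette.

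The main obstacle is the tightest subcase $|f|=4$, where six new incidences and two new vertices must all be consistently colored inside only $\Delta+2$ colors while maintaining the $(1)$-constraint at two possibly high-degree endpoints. The crucial slack comes from $\deg_G(v_i)\leq 4<\Delta$ supplied by Theorem \ref{injective}, which leaves at least one color free at $v_i$ to place $(v_i,v_{i+1})$ without violating the $(1)$-constraint; a symmetric freedom, together with one local recoloring of an incidence at $v_j$ (available because $\Delta\geq 5$), clears $v_j$, after which $v_{i+1}$ and $v_{i+2}$ each have at least $\Delta-2\geq 3$ admissible colors. A short case analysis on the values of $\alpha$ and $\beta$ completes the extension and closes the induction.
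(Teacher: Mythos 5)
Your skeleton is essentially the paper's own proof: induction on the order, a base configuration, Theorem \ref{injective} to get an end face with one endpoint of degree at most $4$, the chord surgery $G_1=(G-v_{i+1})+\{v_i,v_{i+2}\}$ for long faces with exactly the same color-copying, and deletion of the internal $2$-vertices followed by an extension for short faces, where the slack $\deg_G(v_i)\leq 4$ is what makes the two ``sides'' of the face colorable with distinct colors. The only organizational difference is immaterial: the paper runs the chord trick for every face of degree at least $5$ and reserves the deletion argument for $|f|=4$, while you cut at $|f|\geq 6$ versus $|f|\in\{4,5\}$; your $|f|=5$ deletion case also goes through by the same counting (note that in it $\Delta(G')\geq\Delta-1\geq 4$, so Theorem \ref{delta3girth4} is never actually needed).

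One step is wrong as written, though the surrounding text shows you mean the right thing. In the paper's notation $(u,v)$ stands for the incidence $(u,\{u,v\})$, so $I_2(v_k)=\{(v_{k-1},v_k),(v_{k+1},v_k)\}$ for an internal $2$-vertex $v_k$. Your propagation rule ``$c(v_k,v_{k-1})=c(v_k,v_{k+1})$'' equates the two \emph{first} incidences of $v_k$; these share the vertex $v_k$, hence are adjacent in $\mathcal{I}(G)$ (condition (1) of Definition \ref{inci}) and must receive \emph{distinct} colors, so that rule is self-contradictory. The constraint you need --- and the one your stated goal of preserving $\lvert c(I_2(\cdot))\rvert=1$ actually dictates, matching your correct assignments $c((v_{i+1},v_i))=\alpha$ and $c((v_{j-1},v_j))=\beta$ --- is $c((v_{k-1},v_k))=c((v_{k+1},v_k))$ at each internal $v_k$. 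With the indices swapped back, the free parameters are the common colors of the sets $I_2(v_k)$, and your availability counts (at least $\Delta+1-\deg_G(v_i)\geq 2$ choices on the $v_i$ side, at least one on the $v_j$ side, then $\Delta-2\geq 3$ colors for each internal $t$-vertex) close the induction exactly as in the paper.
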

\begin{proof}{
We prove the theorem by induction on the order of $G$. Since $G$ is 2-connected, $\Delta\geq5$ and $g\geq4$, $G$ has at least $10$ vertices.  In Figure~\ref{d5g4}, a $vi$-simultanious $(7,1)$-coloring of the only $2$-connected outerplanar graph of order $10$ with maximum degree $5$ and girth at least $4$ is shown.\\
Suppose that the theorem is true for all $2$-connected outerplanar graphs with maximum degree at least $5$, girth at least $4$ and less than $n$ vertices. By Theorem~\ref{injective}, there is an end face $f =[v_iv_{i+1}\ldots v_j ]$ of $G$ of degree at least $4$, where either $deg(v_i) < 5$ or $deg(v_j) < 5$. Let $H$ be the induced subgraph of $G$ on $t$-vertices of degree $2$ of $f$.\\
		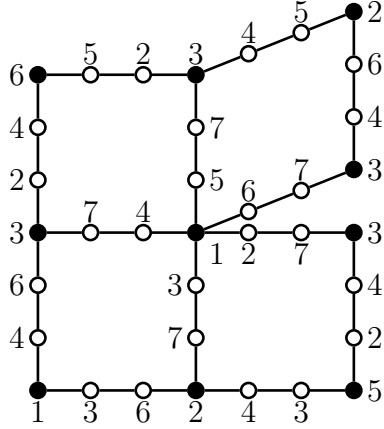
\begin{figure}[h]
			\begin{center}
				\begin{tikzpicture}[scale=0.7]
				\tikzset{vertex1/.style = {shape=circle,draw, fill=black, line width=1pt,opacity=1.0, inner sep=2pt}}
				\tikzset{vertex/.style = {shape=circle,draw, fill=white, line width=1pt,opacity=1.0, inner sep=2pt}}
				\tikzset{edge/.style = {-,> = latex', line width=1pt,opacity=1.0}}
				\node[vertex1] (a1) at (0,0) {};
				\node () at  (0.4,-0.4) {$1$};
				\node[vertex] (a2) at (1,0) {};
				\node () at  (1,-0.4) {$2$};
				\node[vertex] (a3) at  (2,0) {};
				\node () at  (2,-0.4) {$7$};
				\node[vertex1] (a4) at  (3,0) {};
				\node () at  (3.4,0) {$3$};
				\node[vertex] (a5) at  (3,-1) {};
				\node () at  (3.4,-1) {$4$};
				\node[vertex] (a6) at  (3,-2) {};
				\node () at  (3.4,-2) {$2$};
				\node[vertex1] (a7) at (3,-3) {};
				\node () at  (3.4,-3) {$5$};
				\node[vertex] (a8) at (2,-3) {};
				\node () at  (2,-3.4) {$3$};
				\node[vertex] (a9) at (1,-3) {};
				\node () at  (1,-3.4) {$4$};
				\node[vertex1] (a10) at (0,-3) {};
				\node () at  (0,-3.4) {$2$};
				\node[vertex] (a11) at (-1,-3) {};
				\node () at  (-1,-3.4) {$6$};
				\node[vertex] (a12) at (-2,-3) {};
				\node () at  (-2,-3.4) {$3$};
				\node[vertex1] (a13) at (-3,-3) {};
				\node () at  (-3,-3.4) {$1$};
				\node[vertex] (a14) at (-3,-2) {};
				\node () at  (-3.4,-2) {$4$};
				\node[vertex] (a15) at (-3,-1) {};
				\node () at  (-3.4,-1) {$6$};
				\node[vertex1] (a16) at (-3,0) {};
				\node () at  (-3.4,0) {$3$};
				\node[vertex] (a17) at (-2,0) {};
				\node () at  (-2,0.4) {$7$};
				\node[vertex] (a18) at (-1,0) {};
				\node () at  (-1,0.4) {$4$};
				\node[vertex] (a19) at (0,-1) {};
				\node () at  (-0.4,-1) {$3$};
				\node[vertex] (a20) at (0,-2) {};
				\node () at  (-0.4,-2) {$7$};
				\node[vertex] (a21) at (-3,1) {};
				\node () at  (-3.4,1) {$2$};
				\node[vertex] (a22) at (-3,2) {};
				\node () at  (-3.4,2) {$4$};
				\node[vertex1] (a23) at (-3,3) {};
				\node () at  (-3.4,3) {$6$};
				\node[vertex] (a24) at (-2,3) {};
				\node () at  (-2,3.4) {$5$};
				\node[vertex] (a25) at (-1,3) {};
				\node () at  (-1,3.4) {$2$};
				\node[vertex1] (a26) at (0,3) {};
				\node () at  (0,3.4) {$3$};
				\node[vertex] (a27) at (0,2) {};
				\node () at  (0.4,2) {$7$};
				\node[vertex] (a28) at (0,1) {};
				\node () at  (0.4,1) {$5$};
				\node[vertex] (a29) at (1,3.4) {};
				\node () at  (1,3.8) {$4$};
				\node[vertex] (a30) at (2,3.8) {};
				\node () at  (2,4.2) {$5$};
				\node[vertex1] (a31) at (3,4.2) {};
				\node () at  (3.4,4.2) {$2$};
				\node[vertex] (a32) at (3,3.2) {};
				\node () at  (3.4,3.2) {$6$};
				\node[vertex] (a33) at (3,2.2) {};
				\node () at  (3.4,2.2) {$4$};
				\node[vertex1] (a34) at (3,1.2) {};
				\node () at  (3.4,1.2) {$3$};
				\node[vertex] (a35) at (2,0.8) {};
				\node () at  (2,1.2) {$7$};
				\node[vertex] (a36) at (1,0.4) {};
				\node () at  (1,0.8) {$6$};
				\draw[edge] (a1) -- (a2) -- (a3) -- (a4) -- (a5) -- (a6) -- (a7) -- (a8) -- (a9) -- (a10) -- (a11) -- (a12) -- (a13) -- (a14) -- (a15) -- (a16) -- (a17) -- (a18) -- (a1) -- (a19) -- (a20) -- (a10);
				\draw[edge] (a16) -- (a21) -- (a22) -- (a23) -- (a24) -- (a25) -- (a26) -- (a27) -- (a28) -- (a1);
				\draw[edge] (a26) -- (a29) -- (a30) -- (a31) -- (a32) -- (a33) -- (a34) -- (a35) -- (a36) -- (a1);
				\end{tikzpicture}
				\caption{A $2$-connected outerplanar graph of order $10$ with maximum degree $5$ and girth $4$.}\label{d5g4}
			\end{center}
		\end{figure}
If $\Delta(G\setminus H)=4$, then by Theorem~\ref{delta4girth4}, $(G\setminus H)^{\frac{3}{3}}$ has a $7$-proper coloring, named $c$ in which for any vertex $u\in V(G\setminus H)$, $|c(I_2(u))|=1$. Also, if $\Delta\geq 5$, then by induction hypothesis, $c$ is a $(\Delta+2)$-proper coloring for $(G\setminus H)^{\frac{3}{3}}$ with desired property. It is enough to extend $c$ to a $(\Delta+2)$-proper coloring of $G^{\frac{3}{3}}$ with the desired property of the theorem.\\
First suppose that $f=[v_iv_{i+1}v_{i+2}v_{i+3}]$. Consider the graph $G\setminus H$ and its desired coloring $c$ with at most $(\Delta+2)$ colors. First, color two $i$-vertices $(v_{i+1},v_i)$ and $(v_{i+2},v_{i+3})$ as same as $i$-vertices $(v_{i},v_{i+3})$ and $(v_{i+3},v_{i})$, respectively. Now color two $i$-vertices $(v_{i},v_{i+1})$ and $(v_{i+5},v_{i+4})$ with different available colors. Note that, since at least one of the  $t$-vertices $v_i$ or $v_{i+3}$ is of degree at most $4$ in $G$, there are different available colors for these two $t$-vertices. Also, color two $i$-vertices $(v_{i+2},v_{i+1})$ and $(v_{i+1},v_{i+2})$ as same as $i$-vertices $(v_{i},v_{i+1})$ and $(v_{i+3},v_{i+2})$, respectively.
		Now it can be easily seen that, each of $t$-vertices $v_{i+1}$ and $v_{i+2}$ has at least $\Delta-2\geq 3$ available colors. By coloring those $t$-vertices we have a $(\Delta+2)$-coloring for $G^{\frac{3}{3}}$ with the desired property. \\  
		Now, suppose that $f=[v_iv_{i+1}\ldots v_{j-1}v_j]$ is an end face of degree at least $5$ with $j\geq i+4$ and consider the graph $G'=(G-v_{i+1})+e$ where $e=\{v_{i}v_{i+2}\}$. Color two $i$-vertices $(v_i,v_{i+1})$ and $(v_{i+1},v_{i})$ with colors $c((v_i,v_{i+2}))$ and $c((v_{i+2},v_{i}))$, respectively. Also, assign color $c((v_i,v_{i+2}))$ to the $i$-vertex $(v_{i+2},v_{i+1})$. Now, we are going to recolor vertices $(v_{i+3},v_{i+2})$ and $v_{i+2}$ and color the $t$-vertex $v_{i+1}$ as well.  It can be easily seen that, there are two available colors for the $i$-vertex $(v_{i+3},v_{i+2})$ and $\Delta-1\geq 3$ available colors for each of the $t$-vertices $v_{i+1}$ and $v_{i+2}$. By coloring these vertices with different colors, we have a $(\Delta+3)$-coloring for $G^{\frac{3}{3}}$ with the desired property of the theorem.
}\end{proof}
According to Theorems \ref{delta4girth6} and \ref{delta5girth4}, any outerplanar graph $G$ is of $vi$-class 1 when $G$ is (1) triangle-free and $\Delta(G)\geq5$ or (2) $\Delta(G)=4$ and $g(G)\geq6$.
\begin{problem}{\rm
		Charactrize all outerplanar graphs of $vi$-class one.
}\end{problem}
{\bf Acknowledgements.} This research was in part supported by a grant from IPM (No.1400050116).

\end{document}